\documentclass[twoside, 11pt]{article}

\usepackage{graphicx, subfigure}
\usepackage[margin=1in, footnotesep=0.5in]{geometry}
\usepackage[sort&compress, numbers]{natbib} \setlength{\bibsep}{0.0pt}
\usepackage{amsfonts, amsmath, amssymb, amsthm}
\usepackage{MnSymbol}
\usepackage{enumitem}
\usepackage{comment}
\usepackage{verbatim}


\usepackage{color}
\definecolor{darkred}{RGB}{100,0,0}
\definecolor{darkgreen}{RGB}{0,100,0}
\definecolor{darkblue}{RGB}{0,0,150}

\usepackage{hyperref}
\hypersetup{colorlinks=true, linkcolor=darkred, citecolor=darkgreen, urlcolor=darkblue}
\usepackage{url}

\def\hess{\cH}
\def\k{\mathsf{k}}
\def\L{\mathsf{L}}

\def\I{{\rm I}}
\def\d{{\rm d}}

\def\ball{B}

\newtheorem{thm}{Theorem}[section]

\newtheorem{proposition}[thm]{Proposition}

\newtheorem{lemma}[thm]{Lemma}

\newtheorem{corollary}[thm]{Corollary}
\newtheorem{contribution}{Contribution}

\theoremstyle{remark}

\newtheorem{remark}[thm]{Remark}

\numberwithin{equation}{section}
\numberwithin{figure}{section}

\def\beq{\begin{equation}} 
\def\eeq{\end{equation}}
\def\beqn{\begin{eqnarray*}}
\def\eeqn{\end{eqnarray*}}
\def\Bitem{\begin{itemize}\setlength{\itemsep}{.2in}}
\def\bitem{\begin{itemize}\setlength{\itemsep}{.05in}}
\def\eitem{\end{itemize}}
\def\Benum{\begin{enumerate}\setlength{\itemsep}{.2in}}
\def\benum{\begin{enumerate}\setlength{\itemsep}{.05in}}
\def\eenum{\end{enumerate}}
\def\bmult{\begin{multline*}}
\def\emult{\end{multline*}}
\def\bcenter{\begin{center}}
\def\ecenter{\end{center}}
\def\bframe{\begin{frame}}
\def\eframe{\end{frame}}


\newcommand{\prpref}[1]{Proposition~\ref{prp:#1}}

\newcommand{\lemref}[1]{Lemma~\ref{lem:#1}}
\newcommand{\secref}[1]{Section~\ref{sec:#1}}

\newcommand{\remref}[1]{Remark~\ref{rem:#1}}



\DeclareMathOperator{\dist}{dist}

\DeclareMathOperator{\trace}{trace}

\DeclareMathOperator{\diag}{diag}



\def\cE{\mathcal{E}}

\def\cH{\mathcal{H}}

\def\cN{\mathcal{N}}
\def\cO{\mathcal{O}}

\def\cS{\mathcal{S}}
\def\cT{\mathcal{T}}

\def\cV{\mathcal{V}}

\def\fB{\mathfrak{B}}
\def\fK{\mathfrak{K}}
\def\fS{\mathfrak{S}}





\def\bbI{\mathbb{I}}

\def\bbP{\mathbb{P}}
\def\bbQ{\mathbb{Q}}
\def\bbR{\mathbb{R}}

\newcommand{\E}{\operatorname{\mathbb{E}}}

\newcommand{\Cov}{\operatorname{Cov}}

\def\Unif{\text{Unif}}

\newcommand{\<}{\langle}
\renewcommand{\>}{\rangle}

\def\eps{\varepsilon}

\def\iff{\ \Leftrightarrow \ }

\def\1{\mathbbm{1}}
\newcommand{\IND}[1]{\bbI\{ #1 \}}


\definecolor{purple}{rgb}{0.4,.1,.9}


\pagestyle{myheadings}
\raggedbottom

\begin{document}
\thispagestyle{empty}

\title{Embedding Functional Data:\\ Multidimensional Scaling and Manifold Learning\footnotetext{We would like to thank Annegret Burtscher, Thomas Diciccio, Wuchen Li, Joseph Romano, and Justin Roberts for helpful discussions. We are particularly grateful to Bruce Driver for proving \lemref{bruce} from scratch.
This work was partially supported by the US National Science Foundation (DMS 1821154, DMS 1916071).
}
}
\author{
Ery Arias-Castro\footnote{University of California, San Diego, California, USA (\url{https://math.ucsd.edu/\~eariasca/})} 
\and 
Wanli Qiao\footnote{George Mason University, Fairfax, Virginia, USA (\url{https://mason.gmu.edu/\~wqiao/})}
}
\date{}
\maketitle

\begin{abstract}
We adapt concepts, methodology, and theory originally developed in the areas of multidimensional scaling and dimensionality reduction for multivariate data to the functional setting. We focus on classical scaling and Isomap --- prototypical methods that have played important roles in these areas --- and showcase their use in the context of functional data analysis. In the process, we highlight the crucial role that the ambient metric plays. 

\medskip\noindent
{\em Keywords and phrases:}
functional data analysis (FDA);
embedding problem;
multidimensional scaling;
dimensionality reduction;
principal component analysis;
classical scaling;
Isomap;
RKHS metric;
Fisher metric;
Wasserstein metric;
optimal transport;
information geometry
\end{abstract}

\section{Introduction} 
\label{sec:introduction}

{\em Functional data analysis (FDA)} is a specialized area in statistics that has developed around the need to analyze functional data, meaning, situations where observations are best modeled by functions as opposed to points. The associated literature is quite substantial, with several book-length expostions  
\cite{ferraty2006nonparametric, kokoszka2017introduction, ramsay2005functional, ramsay2002applied, hsing2015theoretical, ramsay2005fitting} and some review articles
\cite{wang2016functional, cuevas2014partial}.
FDA is closely related to longitudinal data analysis \cite{hall2006LDA, james2000principal, yao2005functional, zhou2008joint} --- which is more or less synonymous with situations in FDA where the data are sparse --- and overlaps with other areas such as time series clustering or classification \cite{abanda2019review, ismail2019deep, maharaj2019time, bagnall2017great, caiado2010classification, liao2005clustering},
image classification \cite{camps2013advances, lu2007survey, canty2014image}, shape analysis \cite{dryden2016statistical, srivastava2016functional, loncaric1998survey}, as well as 
signal alignment \cite{kneip1992statistical, wang1999synchronizing, trigano2011semiparametric} and image registration \cite{oliveira2014medical, hajnal2001medical, goshtasby2012image, perry2019sample}.

\subsection{Functional principal component analysis}
One of the earliest, and still one of the most popular, tools for the analysis functional data is {\em principal component analysis (PCA)}. 
It is ``the most prevalent tool in FDA" according to \citet{wang2016functional}, and features prominently in the classical textbooks by \citet{ramsay2005functional, ramsay2002applied}.
It corresponds to the Karhunen--Lo\`eve transformation, well-known in signal processing.

The two main uses of PCA in multivariate analysis are to produce an embedding into a lower-dimensional space and to construct new variables, the principal directions, which may lead to interesting relationships between the original variables. In the context of FDA, the principal directions are functions sometimes called {\em modes of variation}, that can be plotted (at least in the case of functions of one or two variables) for data exploration.

Our focus will be on the problem of embedding functional data, or in other words, {\em dimensionality reduction (DR)} in the context of FDA.
 
\subsection{Functional multidimensional scaling}
PCA is intimately related to, and in fact necessitates, the use of a Euclidean metric (in the multivariate setting) or Hilbertian metric (in the functional setting). 
In the FDA statistics literature, in particular, the $L_2$ metric appears to be the most prevalent.
Other metrics have, however, been considered in the context of functional data, even non-Hilbertian ones, e.g., for measuring the similarity between images \cite{daly1992visible, van1996perceptual, winkler1998perceptual, wang2003multiscale}.
Also, in statistics theory, many metrics and divergences have been suggested for comparing functions or densities --- which are often used to model functional data. 

When using a metric or, more generally, a dissimilarity, that is not Hilbertian, an embedding may be obtained via a method for {\em multidimensional scaling (MDS)}.
Strictly speaking, MDS is the problem of embedding `objects' based on proximity information, and is thus distinct from DR --- which is really the problem we are interested in. Having said this, any method for MDS can be turned into a method for DR, by computing all pairwise dissimilarities and then applying the method to these dissimilarities.

While PCA (attributed to \citet{pearson1901liii} and \citet{hotelling1933analysis2,hotelling1933analysis1}) is the main method for DR, {\em classical scaling (CS)} (attributed to \citet{torgerson1958theory} and \citet{gower1966some}) is the main method for MDS. In fact, even though they address different problems, the two methods are sometimes confused, as they yield the same embedding in a Euclidean setting: PCA applied to points in a Euclidean space to obtain an embedding in dimension $k$, say, is equivalent to CS applied to the corresponding pairwise Euclidean distances to produce an embedding in the same dimension $k$. Even though this is true, CS may also be used with dissimilarities that are not Euclidean or Hilbertian.

\begin{contribution}
We examine the behavior of classical scaling with different choices of metric or divergence such as the Wasserstein metric and the Kullback--Leibler divergence. We do so in the context of some emblematic models such as location--scale families and exponential families of densities.
\end{contribution}

\subsection{Functional manifold learning}
However important and popular PCA remains, there are other methods for DR that have been developed for multivariate data. The modern literature is found under umbrella names such as {\em nonlinear dimensionality reduction} or {\em manifold learning}, which describe the basic working situation in which the data points are on or near a submanifold admitting a global chart. The goal, then, is to recover such a chart, or at least the embedding of the data points that it provides.


The {\em isometric feature mapping (Isomap)} method of \citet{Tenenbaum00ISOmap} is particularly emblematic. Its foundation is the added assumption that the underlying manifold is isometric to a convex domain in some Euclidean space, or put differently, that it admits a global chart that provides an isometry between the manifold and a convex set. Founded on this assumption, Isomap proceeds by estimating the pairwise {\em intrinsic distances} and then applying CS to these estimated distances. (We describe the algorithm in more detail in \secref{isomap}.)

In the context of FDA, manifolds and their intrinsic distances (and other geometrical attributes) have been studied in statistics and information theory in the context of {\em information geometry} \cite{amari2016information, ay2017information}, which is a literature that studies statistical models from a geometrical perspective.
In that literature, it is well-known that a broad class of divergences that includes the Hellinger distance and the (symmetrized) Kullback--Leibler divergence induce on smooth statistical models the same intrinsic metric: the {\em Fisher metric} (aka {\em Fisher--Rao metric}) introduced by \citet{radhakrishna1945information, rao1987differential}.

The Fisher metric has found multiple uses in applications calling for functional modeling, such as the detection of structure in images \cite{maybank2006application, maybank2004detection, maybank2019fisher}, in shape analysis \cite{peter2006shape}, psychometrics \cite{da2016derivation}, in proteomics \cite{tucker2014analysis}, and in neuroscience \cite{wu2014analysis}, among other fields.
\citet{carter2009fine} make a connection between the Fisher metric and Isomap, and showcase the use of the latter for visualization and classification of document data and biological (clinical flow cytometry) data.
In the FDA statistics literature proper, \citet{chen2012nonlinear} propose Isomap as a method for DR and provide some elements of theory.

\begin{contribution}
We examine the behavior of Isomap under different metrics and divergences, some of them inducing the Fisher metric on certain smooth models such as exponential families of densities.
\end{contribution}

\subsection{Content} 

The remainder of the paper is organized as follows.
In \secref{general}, we describe the setting that we consider, which consists of a sample of densities from an underlying, unknown statistical model. These densities may be entirely available or only partially known by way of samples --- the latter being a common distinction in FDA. The setting encompasses a number of important settings that we detail later in the paper. In this broad context, we establish some consistency for CS and for Isomap. 
In \secref{examples}, we specialize these general results to various metrics and divergences applied to various parametric models of densities --- in particular, location--scale families and exponential families.
More specifically, in \secref{L2}, we consider some Hilbertian metrics, in particular, the $L_2$ metric and RKHS metrics.
In \secref{hellinger}, we consider a class of regular divergences that includes the Hellinger distance and the Kullback--Leibler divergence, which on smooth models induce the Fisher metric in their intrinsic form. 
In \secref{W2}, we consider the Wasserstein $W_2$ metric.
\secref{discussion} is a discussion section.
All the technical arguments are gathered in \secref{proofs}.

\section{General results}
\label{sec:general}

In this section we describe the setting that we consider in most, if not all of our examples, and state some general results on the consistency of CS and Isomap that will be applied to particular cases in the next section. 

\subsection{Setting}
\label{sec:setting}

The basic setting is that of a set of densities that we need to embed as points in some pre-specified Euclidean space. 
We will consider the following two situations:

\begin{itemize}
\item {\em Population setting}\quad 
Here the data consist in $n$ densities, $q_1, \dots, q_n$, with respect to some known measure $\lambda$ on $\bbR^d$. The densities are completely available, meaning that, in principle, we can compute the value of any functional applied to these densities. 
\item {\em Sample setting}\quad 
Here the data consist in $n$ samples, $\cS_1, \dots, \cS_n$, of respective sizes $m_1, \dots, m_n$, from $n$ underlying densities respect to $\lambda$. Thus, in this setting, the densities are only available via these samples, and the value of  a functional applied to these densities can only be estimated. 
\end{itemize}

The embedding methodology that we consider is not based on any specific modeling, but as is often the case, it will be evaluated on particular models of importance in the statistical literature. Concretely, we will examine situations where the densities belong to a model of the form $\{f_\theta : \theta \in \Theta\}$, where $f_\theta$ is a density with respect to $\lambda$ --- either the Lebesgue measure or the counting measure in our examples. When finite-dimensional --- the `parametric' situation --- the parameter space $\Theta$ is a subset of $\bbR^p$, but in principle $\Theta$ may be infinite-dimensional. We may use the notation $f_\theta(\cdot) \equiv f(\cdot, \theta)$ on occasion. This setting is quite general and covers the vast majority of the practical situations. We will provide some specific examples in \secref{models}. 

\subsection{Embedding problem}
\label{sec:embedding problem}
Consider a general situation in which we have $q_1, \dots, q_n \in \bbQ$, where $\bbQ$ is some set that is equipped with a dissimilarity $\delta: \bbQ \times \bbQ \to \bbR_+$. Until we discuss some generalizations in \secref{discussion}, we ask a dissimilarity to satisfy $\delta(q, q_0) = 0 \iff q = q_0$ and $\delta(q,q_0) = \delta(q_0,q)$.

The embedding problem in this context can be stated in general terms as follows: 
\begin{quote}
Given an embedding dimension $d_e$ (often $d_e=2$ when the goal is to visualize the data), find $p_1, \dots, p_n \in \bbR^{d_e}$ such that $\|p_i - p_j\| \approx \delta_{ij} := \delta(q_i, q_j)$ for all (or most) $i, j \in [n]$. 
\end{quote}
The quality of the approximation can be formalized in multiple ways, for example, via the following notion of {\em stress}
\begin{equation}
\label{stress}
\sum_{i, j} \big|\|p_i - p_j\|^2 - \delta_{ij}^2\big|.
\end{equation}
This is, in fact, a variant of the raw stress function commonly used in MDS. We chose this variant because of its intimate relationship with PCA.

\subsection{Dissimilarities based on an inner product}
\label{sec:inner product}
The use of the $L_2$ norm is common in FDA as it allows for the application of PCA. This is the dissimilarity that we consider now, being defined as 
\begin{align}
\delta(q, q_0)^2 = \|q - q_0\|^2 = \textstyle\int (q-q_0)^2 \d\lambda.
\end{align}
The corresponding inner product is denoted 
\begin{align}
\<q, q_0\> = \textstyle\int q q_0 \d\lambda.
\end{align}
When needed, we assume that the densities under consideration are square integrable.

We start by studying PCA, porting well-known results in the multivariate setting to the functional setting. And we then draw conclusions for CS, which is made possible by the fact that the two methods return the same embedding in the present setting.
 
\subsubsection{Principal component analysis}
\label{sec:PCA}

In the population setting of \secref{setting}, PCA computes the eigenfunctions of the integral operator with kernel
\begin{align}
\label{kappa}
\kappa(s,t) := \sum_{i=1}^n q_i(s) q_i(t) - n \bar q(s) \bar q(t), &&
\text{where} \quad \bar q(s) := \frac1n \sum_{i=1}^n q_i(s),
\end{align}
and obtains an embedding by projecting each $q_i$ onto the subspace spanned by a set of eigenfunctions for this operator for its top $d_e$ eigenvalues. 
With the embedding dimension $d_e$ left implicit in the background, PCA satisfies the following optimality property.

\begin{proposition}
\label{prp:PCA stress}
In the population setting of \secref{setting}, PCA returns an orthogonal projection with minimum stress (among orthogonal projections of same rank). 
\end{proposition}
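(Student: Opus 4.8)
The plan is to reduce the stress minimization to a standard spectral-variational problem. First I would parametrize a rank-$d_e$ orthogonal projection of the ambient Hilbert space $L_2(\lambda)$ by an operator $P$ with $P = P^* = P^2$ and $\operatorname{rank} P = d_e$, and take the associated embedding to be $p_i = P q_i$, expressed in an orthonormal basis of the range of $P$; the choice of basis is an isometry of $\bbR^{d_e}$ and hence irrelevant to the stress, so ``the embedding returned by PCA'' is well defined. Since $P$ is a contraction, $\|p_i - p_j\| = \|P(q_i - q_j)\| \le \|q_i - q_j\| = \delta_{ij}$, so every summand in \eqref{stress} has a definite sign and the stress equals $\sum_{i,j} \big(\delta_{ij}^2 - \|P(q_i-q_j)\|^2\big) = \sum_{i,j} \|(I-P)(q_i - q_j)\|^2$.

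Second, I would apply the classical identity $\sum_{i,j} \|v_i - v_j\|^2 = 2n \sum_i \|v_i - \bar v\|^2$ (valid in any inner-product space, $\bar v$ the mean) to $v_i = (I-P)q_i$, together with the Pythagorean splitting $\|q_i - \bar q\|^2 = \|P(q_i - \bar q)\|^2 + \|(I-P)(q_i - \bar q)\|^2$. This rewrites the stress as $\sum_{i,j}\delta_{ij}^2 - 2n\sum_i \|P(q_i - \bar q)\|^2$, with the first term independent of $P$. So minimizing the stress over rank-$d_e$ orthogonal projections is equivalent to maximizing $\sum_i \|P(q_i - \bar q)\|^2$. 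Using $P = P^* = P^2$, I would rewrite this as $\sum_i \langle P(q_i - \bar q),\, q_i - \bar q\rangle = \operatorname{tr}(P\Sigma)$, where $\Sigma := \sum_i (q_i - \bar q)\otimes(q_i - \bar q)$ is the (unnormalized) covariance operator, whose integral kernel is exactly the $\kappa$ of \eqref{kappa}.

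Third, I would invoke the Ky Fan maximum principle (the operator form of the Courant--Fischer/Rayleigh--Ritz characterization): for a self-adjoint positive operator $\Sigma$ with eigenvalues $\lambda_1 \ge \lambda_2 \ge \cdots$, one has $\max\{\operatorname{tr}(P\Sigma) : P = P^* = P^2,\ \operatorname{rank} P = d_e\} = \lambda_1 + \cdots + \lambda_{d_e}$, attained by (any) orthogonal projection onto a top-$d_e$ eigenspace of $\Sigma$. That projection, applied to the $q_i$, is by definition the embedding produced by PCA, which therefore achieves the minimum stress among rank-$d_e$ orthogonal projections.

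The squared-distance identity and the trace rewriting are mechanical. The only step deserving care is the appeal to infinite-dimensional spectral theory in the last step, but this is painless here: $\Sigma$ is a sum of $n$ rank-one operators, hence has rank at most $n-1$, so the spectral theorem applies with finitely many nonzero eigenvalues and the Ky Fan principle reduces to its elementary finite-dimensional version on $\operatorname{span}(q_1 - \bar q, \dots, q_n - \bar q)$. I therefore do not anticipate a genuine obstacle; if anything, the subtlety is purely bookkeeping, namely being explicit that the stress is invariant under isometries of $\bbR^{d_e}$ and under relabeling of the chosen eigenbasis.
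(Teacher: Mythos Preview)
Your argument is correct. It differs from the paper's proof mainly in packaging rather than in substance. The paper expands everything explicitly in the eigenbasis $\{\phi_r\}$ of $\fK$: writing an arbitrary projection via an orthonormal system $\{\ell_k\}$, it computes the stress as $2n\sum_r \lambda_r h_r$ with $h_r = \sum_{k>d_e}\langle\phi_r,\ell_k\rangle^2 \in [0,1]$ and $\sum_r(1-h_r)=d_e$, and then optimizes this linear functional directly. Your route instead passes through the trace identity $\sum_i\|P(q_i-\bar q)\|^2 = \operatorname{tr}(P\Sigma)$ and invokes the Ky~Fan maximum principle by name. The paper's $h_r$ argument is, in effect, a self-contained proof of Ky~Fan in this special case; your version is shorter and makes the connection to the classical variational characterization of PCA explicit, at the cost of citing an external result. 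Your observation that $\Sigma$ has finite rank, so the infinite-dimensional spectral step reduces to finite dimensions, mirrors the paper's remark that the situation is ``effectively multivariate.''
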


We note that, if the $d_e$ and $d_e+1$ largest eigenvalues of the operator defined by $\kappa$ coincide, the choice of projection is not unique, but the resulting embeddings are all rigid transformations of each other. 
This is due to \prpref{PCA stress} and the fact that all optimal (orthogonal) projections result in embeddings that are rigid transformations of each other.
In that sense we may say that there is a unique orthogonal projection that minimizes the stress among orthogonal projections, and it is recovered by PCA. 
Although we could not find this result as stated despite a number of early publications exploring basic properties of PCA in a functional setting, e.g., \cite{castro1986principal, besse1986principal}, it is a straightforward consequence of a correspondence with the multivariate data analysis setting. We provide a succinct proof in \secref{proofs}.

While we may have to work with samples instead of densities, the following consistency result holds. Recall that the embedding dimension $d_e$ is fixed and left implicit in the background.

\begin{proposition}
\label{prp:PCA consistency}
In the sample setting of \secref{setting}, suppose that based on the samples we produce $\hat q_1, \dots, \hat q_n$ that are consistent for $q_1, \dots, q_n$ in $L_2(\lambda)$. Then PCA applied to the resulting $\hat\kappa$ is consistent in the sense that the orthogonal projection it returns, $\hat p_1,\dots,\hat p_n$, is asymptotically stress minimizing. (The asymptotic limit is as $\min_j m_j \to \infty$ while $n$ remains fixed.)
In fact, there is $C>0$ depending only on the configuration $q_1, \dots, q_n$ such that
\begin{align}
\label{PCA bound}
\min_{\{p_1,\dots,p_n\}} \sum_{i=1}^n\|p_i - \hat p_i \|^2 \le C  n\sum_{i=1}^n \| q_i - \hat q_i\|^2  \Big( 1 + \sum_{i=1}^n \| q_i - \hat q_i\|^2 \Big),
\end{align} 
where the minimum is over the set of stress minimizing orthogonal projections.
\end{proposition}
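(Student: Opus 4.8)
The plan is to reduce this infinite-dimensional statement to a finite-dimensional matrix perturbation problem, exploiting the identification between PCA and classical scaling recalled in the Introduction. The integral operator with kernel $\kappa$ from \eqref{kappa} equals $\sum_{i=1}^n \tilde q_i \otimes \tilde q_i$ with $\tilde q_i := q_i - \bar q$, so its nonzero eigenvalues coincide with those of the $n\times n$ Gram matrix $B$ with entries $B_{ij} = \langle \tilde q_i, \tilde q_j\rangle$, and the PCA embedding $p_1,\dots,p_n$ consists precisely of the rows of the canonical classical-scaling factor $X = U_{d_e}\Lambda_{d_e}^{1/2}$ obtained from a spectral decomposition $B = U\Lambda U^\top$ with $\lambda_1 \ge \cdots \ge \lambda_n \ge 0$ (so $X$ has orthogonal columns and $XX^\top$ is a best rank-$d_e$ approximation of $B$). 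Writing $\hat B$, $\hat X$ for the analogues built from $\hat q_1,\dots,\hat q_n$, and using \prpref{PCA stress} and the remark after it --- the population stress-minimizers form a single rigid orbit of $X$, and the optimal translation when matching $X$ to $\hat X$ vanishes because $\sum_i p_i = \sum_i \hat p_i = 0$ --- the left-hand side of \eqref{PCA bound} equals $\min_O \|X - \hat X O\|_F^2$, the minimum over $d_e\times d_e$ orthogonal matrices $O$.

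The first, routine, step is to bound $\|B - \hat B\|_F$ in terms of $E := (\sum_i \|q_i - \hat q_i\|^2)^{1/2}$. Setting $e_i := q_i - \hat q_i$, I would expand $B_{ij} - \hat B_{ij}$ into the three bilinear pieces built from $\tilde q$ and the centered $e$'s, apply Cauchy--Schwarz termwise, and use that centering only decreases sums of squares, to obtain $\|B - \hat B\|_F \le 2\bigl(\sum_i \|\tilde q_i\|^2\bigr)^{1/2} E + E^2 \le C_1 E(1+E)$ with $C_1$ depending only on the configuration.

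The crux --- and the step I expect to be the main obstacle --- is to pass back from $\|B - \hat B\|_F$ to $\min_O \|X - \hat X O\|_F$, that is, a perturbation bound for the (non-unique) low-rank square-root factorization, of the type used in stability analyses of classical scaling. When $\lambda_{d_e} > 0$, combining the Davis--Kahan $\sin\Theta$ theorem (for the top-$d_e$ eigenspace) with Weyl's inequality (for the eigenvalues) should give $\min_O \|X - \hat X O\|_F \le c\, \lambda_{d_e}^{-1/2} \|B - \hat B\|_F$ as long as $\|B - \hat B\|_F$ stays below a configuration-dependent threshold (so the $d_e$-th and $(d_e{+}1)$-th eigenvalues cannot cross); squaring, inserting the previous bound, and using $(1+E)^2 \le 2(1+E^2)$ gives the asserted form. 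Above that threshold --- equivalently, $E$ bounded below by a configuration constant --- I would fall back on the crude bound $\min_O \|X - \hat X O\|_F^2 \le 2\operatorname{tr}(XX^\top) + 2\operatorname{tr}(\hat X \hat X^\top) \le 2\operatorname{tr}(B) + 2\operatorname{tr}(\hat B) \le C_2(1+E^2)$, using $\operatorname{tr}(\hat B) \le \sum_i \|\hat q_i\|^2 \le 2\sum_i \|q_i\|^2 + 2E^2$; this is of the right order since $E^2$ is then bounded below. Taking $C$ to be $n$ times the maximum of the resulting constants yields \eqref{PCA bound} (the factor $n$ is unneeded slack, but harmless). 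The one remaining wrinkle is the degenerate case $\lambda_{d_e} = 0$: there I would pass to the true rank $r < d_e$, note via Weyl that the eigenvalues of $\hat B$ beyond the $r$-th are of order $\|B - \hat B\|$ so that $\hat X$ lies within order $\sqrt n\,\|B - \hat B\|_F$ of a rank-$r$ matrix, and then invoke the non-degenerate bound with $\lambda_r > 0$ in the denominator (or absorb this case into the crude bound when $\lambda_r$ is too small to help).

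Finally, the asymptotic consistency claim is immediate: under the hypothesis that the $\hat q_i$ are $L_2(\lambda)$-consistent for the $q_i$, we have $E^2 = \sum_i \|q_i - \hat q_i\|^2 \to 0$ as $\min_j m_j \to \infty$, so $\min_O \|X - \hat X O\|_F^2 \to 0$ by \eqref{PCA bound}.
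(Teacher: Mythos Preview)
Your strategy is essentially the paper's, recast in the dual finite-dimensional picture: both arguments reduce to a spectral perturbation bound (Davis--Kahan for eigenvectors, Weyl for eigenvalues) and a routine estimate of the operator/Gram perturbation in terms of $\sum_i\|q_i-\hat q_i\|^2$. The paper works with the integral operator $\fK$ and its eigenfunctions; you work with the Gram matrix $B$ and its eigenvectors. These are equivalent routes.

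There is, however, one genuine gap. Your application of Davis--Kahan ``for the top-$d_e$ eigenspace'' tacitly assumes $\lambda_{d_e}>\lambda_{d_e+1}$; when $\lambda_{d_e}=\lambda_{d_e+1}>0$ that subspace is not an invariant subspace of $B$, and your threshold device (``so the $d_e$-th and $(d_e{+}1)$-th eigenvalues cannot cross'') is vacuous because they already coincide. Your degenerate branch only treats $\lambda_{d_e}=0$. The paper handles this by passing to $d_c\ge d_e$, the smallest index with a strict gap $\lambda_{d_c}>\lambda_{d_c+1}$, applying Davis--Kahan eigenspace-by-eigenspace (with the alignment taken in the block-orthogonal group $\cO$ respecting multiplicities), and only then truncating from $d_c$ back to $d_e$ coordinates. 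You need the same device, or an equivalent one, to close the argument.

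Two minor remarks. First, the split into ``below threshold'' and ``crude bound'' regimes is unnecessary: the variant of Davis--Kahan the paper cites (Yu--Wang--Samworth) requires no smallness assumption on the perturbation, so the resulting inequality is uniform. Second, you are right that the factor $n$ in \eqref{PCA bound} is slack: in the paper it enters through a Cauchy--Schwarz step bounding $\|\fK-\hat\fK\|_2^2$ by $n\sum_i\iint(u_iu_i-\hat u_i\hat u_i)^2$, which your direct bound on $\|B-\hat B\|_F$ sidesteps.
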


Again, the result is essentially known, at least in the multivariate setting, but as it is not readily available in the functional setting we provide a succinct proof in \secref{proofs}.

\prpref{PCA consistency} establishes the consistency of PCA in the asymptotic limit where $\min_j m_j \to \infty$ while $n$ remains fixed. Such a result is particularly meaningful when the $n$ underlying populations, $q_1, \dots, q_n$, are, themselves, of interest.
A situation which seems more common in FDA is where these densities are not of particular interest because they are drawn from a larger population of densities. This is invariably the case in the context of longitudinal data, for example. In such a situation, it is of possibly greater interest to consider what happens when $n\to\infty$. In order to consider this situation, we assume that the densities are iid copies of a stochastic process. 

\begin{proposition}
\label{prp:PCA n-consistency}
In the population setting of \secref{setting}, suppose that $Q_1, \dots, Q_n$ are iid copies of a stochastic process $Q$ with values in $L_2(\lambda)$, and $\hat \pi$ is the PCA orthogonal projection based on the $Q_i$. Let $\Pi$ be the set of all
orthogonal projections $\pi$ onto a $d_e$-dimensional subspace of $L_2(\lambda)$ minimizing the following notion of expected stress over such projections
\begin{align}
\label{expected_stress}
\E\Big[\big|\|\pi(Q) -\pi(Q')\|^2 - \|Q-Q'\|^2\big|\Big],
\end{align}
where $Q$ and $Q'$ are iid copies. 
Then, for any $q \in \bbQ$, with probability one in the asymptotic limit where $n\to\infty$,   
\begin{align}
\min_{\pi\in\Pi}\|\pi(q) - \hat\pi(q)\|^2 \to 0.
\end{align}
If $\E \|Q\|^4 < \infty$, there exists a constant $C>0$ depending on $Q$ such that for any $q \in \bbQ$ and $n\geq1$,
\begin{align}
\E\min_{\pi\in\Pi} \|\pi(q) - \hat\pi(q)\|^2 < C \|q\|^2 n^{-1}.
\end{align}
\end{proposition}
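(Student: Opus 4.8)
The plan is to recast both the empirical and the expected stress as quadratic forms in a complementary projection, which reduces the statement to a quantitative comparison of the empirical and population covariance operators of $Q$. Fix $d_e$ and let $\pi$ range over orthogonal projections of rank $d_e$, with $\pi^\perp := I - \pi$. Since an orthogonal projection is a contraction, the absolute values in \eqref{stress} and \eqref{expected_stress} may be dropped, and a short computation (the centering identity already built into \eqref{kappa}) gives, for the stress \eqref{stress} evaluated at $Q_1,\dots,Q_n$, the value $2 n^2 \operatorname{tr}(\pi^\perp \hat\Sigma_n)$, where $\hat\Sigma_n := \tfrac1n \sum_i (Q_i - \bar Q)\otimes (Q_i - \bar Q)$ is the empirical covariance operator (whose kernel is $\tfrac1n \kappa$); and, for the expected stress \eqref{expected_stress}, the value $2 \operatorname{tr}(\pi^\perp \Sigma_Q)$, where $\Sigma_Q := \E[(Q - \E Q) \otimes (Q - \E Q)]$ (finite, as $\E\|Q\|^2 < \infty$ is implicit in \eqref{expected_stress}). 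By \prpref{PCA stress} together with the Ky Fan / Courant--Fischer variational principle for compact self-adjoint operators, $\hat\pi$ is then a top-$d_e$ eigenprojection of $\hat\Sigma_n$, while $\Pi$ is exactly the set of top-$d_e$ eigenprojections of $\Sigma_Q$ --- a single projection precisely when the $d_e$th and $(d_e{+}1)$st eigenvalues of $\Sigma_Q$ are distinct.

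Next I would appeal to the law of large numbers in the separable Hilbert space of Hilbert--Schmidt operators: the $(Q_i - \E Q)^{\otimes 2}$ are i.i.d.\ with mean $\Sigma_Q$, so $\hat\Sigma_n \to \Sigma_Q$ almost surely in Hilbert--Schmidt norm (after a one-line argument absorbing the difference between $\bar Q$ and $\E Q$), and, when $\E\|Q\|^4 < \infty$, $\E\|\hat\Sigma_n - \Sigma_Q\|_{\mathrm{HS}}^2 \le C_1 n^{-1}$ with $C_1$ depending only on the law of $Q$. Because $\|\pi(q) - \hat\pi(q)\| \le \|\pi - \hat\pi\|_{\mathrm{op}}\,\|q\|$ and $\|\cdot\|_{\mathrm{op}} \le \|\cdot\|_{\mathrm{HS}}$, both displayed conclusions will follow once I show that, for $n$ large enough (depending on $Q$), $\min_{\pi \in \Pi} \|\pi - \hat\pi\|_{\mathrm{op}} \le C_2 \|\hat\Sigma_n - \Sigma_Q\|_{\mathrm{op}}$ with $C_2$ depending on $Q$: the almost-sure claim is then immediate, and for the moment bound one combines this with $\E\|\hat\Sigma_n - \Sigma_Q\|_{\mathrm{HS}}^2 \le C_1 n^{-1}$ on the ``good'' event, while on its complement --- which by Markov's inequality has probability $O(n^{-1})$ --- one uses the trivial bound $\min_{\pi\in\Pi}\|\pi - \hat\pi\|_{\mathrm{op}}^2 \le 4$.

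The heart of the matter --- and the step I expect to be the main obstacle --- is this last inequality when $\Sigma_Q$ has \emph{no} spectral gap at $d_e$, so that neither $\hat\pi$ nor $\Pi$ is a single projection and a naive Davis--Kahan bound is unavailable. Let $\lambda_1 \ge \lambda_2 \ge \cdots$ be the eigenvalues of $\Sigma_Q$, let $m$ be the number of them strictly exceeding $\lambda_{d_e}$ and $m + \ell$ the number that are at least $\lambda_{d_e}$, so that $m < d_e \le m + \ell$ while $\Sigma_Q$ \emph{does} have a spectral gap after index $m$ and after index $m + \ell$. By Weyl's inequality these two gaps are inherited by $\hat\Sigma_n$ as soon as $\|\hat\Sigma_n - \Sigma_Q\|_{\mathrm{op}}$ is below half the smaller of them, so the top-$m$ and top-$(m{+}\ell)$ eigenprojections $\hat P_{\le m} \preceq \hat P_{\le m+\ell}$ of $\hat\Sigma_n$ are well defined, and the Davis--Kahan $\sin\Theta$ theorem places each within $C_3 \|\hat\Sigma_n - \Sigma_Q\|_{\mathrm{op}}$ of the corresponding eigenprojections $P_{\le m} \preceq P_{\le m+\ell}$ of $\Sigma_Q$. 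Any top-$d_e$ eigenprojection of $\hat\Sigma_n$ is pinched, $\hat P_{\le m} \preceq \hat\pi \preceq \hat P_{\le m+\ell}$, so the extra rank-$(d_e - m)$ part of $\hat\pi$ lies in the range of the orthogonal projection $\hat P_{\le m+\ell} - \hat P_{\le m}$, which is within $O(\|\hat\Sigma_n - \Sigma_Q\|_{\mathrm{op}})$ of the $\ell$-dimensional eigenspace of $\Sigma_Q$ at $\lambda_{d_e}$; projecting that extra part onto this eigenspace (the projection being injective on the relevant subspace once $n$ is large) produces a genuine $\pi^\ast \in \Pi$ with $\|\hat\pi - \pi^\ast\|_{\mathrm{op}} \le C_2 \|\hat\Sigma_n - \Sigma_Q\|_{\mathrm{op}}$. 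When a gap at $d_e$ is present this degenerates to a single application of Davis--Kahan with $\Pi = \{P_{\le d_e}\}$; in either case, combining with the operator convergence of the second paragraph finishes the proof.
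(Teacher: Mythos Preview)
Your proposal is correct and follows the same overall strategy as the paper: identify $\Pi$ and $\hat\pi$ as top-$d_e$ eigenprojections of the population and empirical covariance operators, bound the distance between them via a Davis--Kahan argument, and feed in the Hilbert--Schmidt convergence $\hat\Sigma_n \to \Sigma_Q$ (almost surely, and with the $n^{-1}$ second-moment rate under $\E\|Q\|^4<\infty$).

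The differences are in execution rather than in idea. You recast the stress as $\operatorname{tr}(\pi^\perp\Sigma)$ and work throughout with projection operators; the paper instead repeats the eigenfunction computation from \prpref{PCA stress} and works coordinate-wise with $\sum_k\langle q,\phi_k^\pi-\hat\phi_k\rangle^2$, minimizing over block-orthogonal rotations within population eigenspaces (exactly the mechanism from the proof of \prpref{PCA consistency}). For the no-gap case you pinch $\hat\pi$ between $\hat P_{\le m}$ and $\hat P_{\le m+\ell}$ and project the residual piece onto the $\lambda_{d_e}$-eigenspace of $\Sigma_Q$; the paper handles multiplicity by applying the Yu--Wang--Samworth form of Davis--Kahan block-by-block over the distinct population eigenvalues, which yields a \emph{deterministic} bound $\min_{\pi\in\Pi}\|\pi(q)-\hat\pi(q)\|^2\le C\|q\|^2\|\fK-\hat\fK\|_2^2$. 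That deterministic bound lets the paper take expectations directly (citing \cite{horvath2012inference} for $\E\|\fK-\hat\fK\|_2^2\le n^{-1}\E\|Q\|^4$), whereas your pinching argument needs the good event $\|\hat\Sigma_n-\Sigma_Q\|_{\mathrm{op}}<\text{gap}/2$ and hence the good/bad split via Markov. Both routes work; the paper's is slightly shorter for the moment bound, while your trace formulation is arguably cleaner for identifying $\Pi$.
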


The asymptotic limit where $n\to\infty$ may also be studied in the sample setting of \secref{setting}. We note that, perhaps surprisingly at first sight, a consistency result may be derived even when $m_1, \dots, m_n$ remain bounded, as done, e.g., in \cite{hall2006LDA}. Such an asymptotic setting may be seen as a most extreme form of sparse FDA, perhaps encountered in longitudinal data analysis. We do not provide additional details.

\subsubsection{Classical scaling}
\label{sec:CS inner}
As is well-known, when applied in the context of a Hilbert space, CS is equivalent to PCA, in that the two methods produce the same embedding (again, up to a rigid transformation). Perhaps for this reason, the two methods are sometimes confused. Although leading to the same embedding, they take different computational paths to get there. Indeed, continuing with the same notation, CS proceeds by computing the top $d_e$ eigenvectors of the matrix $B := (\<q_i, q_j\>)$, obtaining $u_1, \dots, u_{d_e}$ ordered according to the eigenvalues $\nu_1 \ge \dots 
\ge \nu_{d_e}$, and embeds $q_i$ as $(\sqrt{\nu_1} u_{i,1}, \dots, \sqrt{\nu_{d_e}} u_{i,d_e}) \in \bbR^{d_e}$, where $u_j = (u_{1,j}, \dots, u_{n,j})$.

Even though the setting may be infinite dimensional in principle, the fact that we only have finitely many densities renders the problem effectively multivariate. And, in the multivariate setting, it is an established fact that the two methods return the same output. (To be sure, we provide some technical details in \secref{proofs}.)
Knowing this, we may draw the following conclusions from the results in \secref{PCA}.

\begin{corollary}
\label{cor:CS inner}
In the population setting of \secref{setting}, CS returns an embedding that corresponds to an orthogonal projection minimizing the stress.
\end{corollary}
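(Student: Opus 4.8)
The plan is to reduce the statement to \prpref{PCA stress}, exploiting the classical equivalence between CS and PCA in a Hilbertian setting. Concretely, I would show that the embedding produced by CS is a rigid transformation (rotation, reflection, translation) of the one produced by PCA, and then note that the stress \eqref{stress} depends on a configuration $p_1, \dots, p_n$ only through the pairwise distances $\|p_i - p_j\|$ and is therefore invariant under rigid transformations. Since \prpref{PCA stress} guarantees that PCA returns a stress-minimizing orthogonal projection, it follows that the CS embedding is that orthogonal projection post-composed with a rigid motion, and so it too minimizes the stress --- which is exactly the assertion of the corollary.

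For the equivalence, the key step is the standard Gram/covariance duality, aided by the observation that, although $L_2(\lambda)$ is infinite-dimensional, only $n$ densities are involved, and they span a subspace $V$ of dimension $r \le n$; fixing an orthonormal basis of $V$ turns the problem into an ordinary multivariate one in $\bbR^r$, to which the classical argument applies verbatim. Write $X$ for the $n \times r$ matrix whose $i$-th row represents $q_i - \bar q$ in that basis. The matrix that CS eigendecomposes --- the doubly-centered matrix of squared distances, equivalently the Gram matrix of the centered densities --- is then $B = X X^\top$, while the integral operator with kernel $\kappa$ of \eqref{kappa} is represented on $V$ by $X^\top X$. A single singular value decomposition $X = U \Lambda^{1/2} W^\top$ then delivers, simultaneously: that the positive eigenvalues $\nu_1 \ge \nu_2 \ge \cdots$ of $B$ and of the $\kappa$-operator coincide; that the CS embedding $(\sqrt{\nu_1}\, u_{i,1}, \dots, \sqrt{\nu_{d_e}}\, u_{i,d_e})$ is the $i$-th row of $U_{d_e} \Lambda_{d_e}^{1/2}$; and that the PCA embedding --- obtained by projecting $q_i - \bar q$ onto the span of the leading $d_e$ eigenfunctions of the $\kappa$-operator, i.e.\ the corresponding columns of $W$ --- has $i$-th row equal to that same $U_{d_e} \Lambda_{d_e}^{1/2}$, up to right multiplication by an orthogonal matrix, hence up to a rigid transformation of $\bbR^{d_e}$.

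I do not anticipate a genuine obstacle; the content is bookkeeping, and the only delicate point is degeneracy when $\nu_{d_e} = \nu_{d_e+1}$. In that case neither the PCA projection nor the set of eigenvectors chosen by CS is unique, but, as already remarked after \prpref{PCA stress}, all stress-minimizing orthogonal projections yield embeddings that are rigid transformations of one another, and, correspondingly, any admissible CS output differs from a fixed one by an orthogonal transformation acting within the repeated-eigenvalue block. Tracking this ambiguity, together with the (harmless) reduction from $L_2(\lambda)$ to the finite-dimensional span $V$, is essentially all there is to do beyond invoking \prpref{PCA stress}.
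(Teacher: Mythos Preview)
Your proposal is correct and follows essentially the same route as the paper: reduce to a finite-dimensional problem via the span of the $q_i$, invoke the Gram/covariance duality to identify the CS output with the PCA output, and then appeal to \prpref{PCA stress}. The paper's version is marginally more direct --- it verifies that $a_k = (\langle q_1,\phi_k\rangle,\dots,\langle q_n,\phi_k\rangle)^\top$ is an eigenvector of $B$ with eigenvalue $\lambda_k$, yielding exact equality of the two embeddings rather than equality up to a rigid motion --- but your SVD argument and your handling of the degenerate case $\nu_{d_e}=\nu_{d_e+1}$ are equally valid and arguably more complete.
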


\begin{corollary}
\label{cor:CS inner consistency}
In the context of \prpref{PCA consistency}, CS applied to $\hat B := (\<\hat q_i, \hat q_j\>)$ is consistent in the sense that it asymptotically recovers a stress minimizing orthogonal projection.
\end{corollary}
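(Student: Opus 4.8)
The plan is to obtain \corref{CS inner consistency} as an essentially immediate consequence of \prpref{PCA consistency}, \corref{CS inner}, and the identity --- established in the multivariate reduction carried out in \secref{proofs} --- that, in a Hilbertian setting, CS and PCA yield the same embedding up to a rigid transformation. So almost all of the work is already done elsewhere; the task is to assemble the pieces and keep careful track of the rigid-motion ambiguity.

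First I would record the two instances of that identity that are needed here: CS applied to $\hat B = (\langle \hat q_i, \hat q_j\rangle)$ returns, up to a rigid motion, the PCA embedding of $\hat q_1, \dots, \hat q_n$ (the one built from $\hat\kappa$); and CS applied to $B = (\langle q_i, q_j\rangle)$ returns, up to a rigid motion, the PCA embedding of $q_1, \dots, q_n$. The only subtlety to note is that the matrix actually diagonalized by CS is the doubly-centered Gram matrix $J \hat B J$ with $J = I - \tfrac1n \mathbf{1}\mathbf{1}^\top$, whose nonzero eigenstructure coincides with that of the integral operator with kernel $\hat\kappa$ --- recall that $\kappa(s,t) = \sum_i (q_i(s) - \bar q(s))(q_i(t) - \bar q(t))$, i.e., $\kappa$ is already the centered covariance kernel. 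This is exactly the computation referenced in \secref{proofs}.

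Next, the hypothesis of \prpref{PCA consistency} gives $\hat q_i \to q_i$ in $L_2(\lambda)$ for each $i$, hence $\sum_i \|q_i - \hat q_i\|^2 \to 0$ as $\min_j m_j \to \infty$. The bound \eqref{PCA bound} then shows that the PCA embedding from $\hat\kappa$ is, after an optimal rigid alignment, within an $\ell_2$-distance bounded by the right-hand side of \eqref{PCA bound} --- which tends to $0$ --- of the set of stress-minimizing orthogonal-projection embeddings of $q_1,\dots,q_n$; and by \corref{CS inner} the PCA embedding of $q_1,\dots,q_n$ is itself one such stress-minimizing embedding. Pushing both statements through the CS/PCA identity of the previous paragraph, and using that stress depends only on pairwise distances and is therefore rigid-motion invariant, one concludes that the CS embedding from $\hat B$ is asymptotically stress-minimizing, which is the assertion.

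The one genuinely delicate point is the bookkeeping around non-uniqueness: PCA and CS each pin down the embedding only modulo a rigid motion, the set of optimal orthogonal projections can itself be nontrivial (e.g.\ when the $d_e$-th and $(d_e+1)$-st eigenvalues of $\kappa$ coincide), and the rigid-motion ambiguity introduced when translating CS back to PCA must be seen not to interfere with the choice of optimal projection. This is handled just as in \prpref{PCA consistency}: one phrases consistency as convergence to the \emph{set} of optimal embeddings through the quantity $\min_{\{p_1,\dots,p_n\}} \sum_i \|p_i - \hat p_i\|^2$, which is rigid-motion invariant, and checks that the CS/PCA correspondence carries this optimal set to itself. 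No new perturbation analysis is required --- the quantitative estimate is already supplied by \eqref{PCA bound} --- so beyond this invariance bookkeeping the corollary needs no further work.
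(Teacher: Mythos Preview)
Your proposal is correct and follows exactly the approach the paper takes: the paper states \corref{CS inner consistency} as an immediate consequence of \prpref{PCA consistency} together with the PCA/CS equivalence established in \secref{proofs}, without giving a separate proof. Your write-up simply spells out the invariance bookkeeping (rigid-motion ambiguity, non-uniqueness of the optimal projection) that the paper leaves implicit, but the underlying argument is the same.
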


\begin{corollary}
\label{cor:CS inner n-consistency}
In the context of \prpref{PCA n-consistency}, the same conclusions apply to CS.
\end{corollary}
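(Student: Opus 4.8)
The plan is to reduce the statement entirely to the fact, established in \secref{proofs} and already used in \secref{CS inner} to deduce the preceding corollaries, that on a finite set of points in a Hilbert space classical scaling and PCA return the same embedding up to a rigid transformation. This is an exact, finite-sample algebraic identity, so it holds for every fixed $n$ and in particular along the sequence $n\to\infty$ of \prpref{PCA n-consistency}; passing from PCA to CS therefore requires no new uniformity or quantitative estimate beyond what is already in hand.

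First I would make the identification of the CS output with an orthogonal projection precise. Given $Q_1,\dots,Q_n$, let $\hat\pi$ denote the PCA projection onto the span of the top $d_e$ eigenfunctions of the operator with kernel $\kappa$ in \eqref{kappa}. One checks that the spectral embedding produced by CS from the Gram matrix $B=(\<Q_i,Q_j\>)$ coincides, after fixing an orthonormal basis of the range of $\hat\pi$ and up to a rigid transformation, with $(\hat\pi(Q_1),\dots,\hat\pi(Q_n))$ read in that basis; equivalently, the CS embedding is obtained from the PCA embedding by a fixed isometry between $\bbR^{d_e}$ and the range of $\hat\pi$. This is precisely what allows one to read ``the embedding returned by CS'' as ``an orthogonal projection'', and, as in \prpref{PCA n-consistency}, to make sense of its out-of-sample value $\hat\pi(q)$ at an arbitrary $q\in\bbQ$ by applying the associated projection. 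The set $\Pi$ of stress-minimizing $d_e$-dimensional orthogonal projections is intrinsic to the law of $Q$ and is unaffected by the choice between the two methods.

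Granting this, both conclusions of \prpref{PCA n-consistency} transfer verbatim to CS: almost surely $\min_{\pi\in\Pi}\|\pi(q)-\hat\pi(q)\|^2\to 0$ as $n\to\infty$, and, when $\E\|Q\|^4<\infty$, $\E\min_{\pi\in\Pi}\|\pi(q)-\hat\pi(q)\|^2<C\|q\|^2 n^{-1}$, now with $\hat\pi$ denoting the projection associated with the CS embedding. I expect the only real (and very minor) obstacle to be one of phrasing rather than of analysis: when the $d_e$-th and $(d_e+1)$-th eigenvalues coincide, the CS embedding, like the PCA one, is determined only up to a rigid transformation (and up to sign flips and reorderings within a repeated eigenvalue block), so the conclusion must be stated modulo rigid transformations, exactly as is already done through the $\min_{\pi\in\Pi}$ in \prpref{PCA n-consistency} and the remark following \prpref{PCA stress}. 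Once that convention is in place, the corollary is immediate from \prpref{PCA n-consistency} together with the CS--PCA equivalence.
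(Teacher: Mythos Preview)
Your proposal is correct and matches the paper's approach exactly: the paper does not give a separate proof of this corollary, treating it as an immediate consequence of the fact (established in \secref{proofs}) that PCA and CS return the same embedding when the dissimilarity is Hilbertian, combined with \prpref{PCA n-consistency}. Your handling of the out-of-sample extension and of eigenvalue ties is also in line with how the paper frames the result.
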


While this is stated in the context of the $L_2$ norm, we mention very recent work of \citet{lim2022classical}, which establishes consistency under more general conditions (see Corollary 7.4 there). 

\subsection{General dissimilarities}
\label{sec:general dissimilarities}
We still consider the same generic embedding problem described in \secref{embedding problem}, except that now the dissimilarity $\delta$ is general --- although in our examples it will either be a metric or a well-behaved divergence.

\subsubsection{Classical scaling}
\label{sec:classical scaling}
When $\delta$ is not based on an inner product, the motivation for using PCA is not clear as the procedure does not take the dissimilarity $\delta$ into account to produce an embedding. 
However, CS remains relevant. 
For a general dissimilarity $\delta$, it takes the following form: 
\begin{enumerate}[noitemsep]
\item Form the matrix $A = (a_{ij})$ with $a_{ij} := -\frac12 \delta_{ij}^2$; 
\item Double-center $A$ to obtain $B = (b_{ij})$ with $b_{ij} := a_{ij} - \bar a_{i\cdot} - \bar a_{\cdot j} + \bar a_{\cdot\cdot}$;
\item Compute the top $d_e$ eigenvectors of $B$, denoted $u_1, \dots, u_{d_e}$ and ordered according to the eigenvalues $\nu_1 \ge \dots \ge \nu_{d_e}$; 
\item Embed $q_i$ as $(\sqrt{\nu_1^+}u_{i,1}, \dots, \sqrt{\nu_{d_e}^+} u_{i,d_e}) \in \bbR^{d_e}$, where $u_j = (u_{1,j}, \dots, u_{n,j})$.
\end{enumerate}
Above, $a^+ = \max(a, 0)$ for any $a \in \bbR$, which is the positive part of $a$.
This is necessary when $\delta$ is a general metric, as unlike when it is based on an inner product, the matrix $B$ is not necessarily positive semidefinite. In fact, by a classical theorem of \citet{schoenberg1935remarks}, $B$ is positive semidefinite exactly when there is a configuration of points in a Euclidean space (of dimension anywhere between $1$ and $n$) whose pairwise distances coincide with the $\delta_{ij}$.

In general, CS is not known to satisfy an optimality property in terms of stress, but instead, in terms of the {\em strain}, defined as
\begin{equation}
\label{strain}
\sum_{i, j} \big(\<p_i, p_j\> - b_{ij}\big)^2,
\end{equation}
where $(b_{ij})$ is defined above.
Indeed, CS returns an embedding in the desired dimension that minimizes the strain. However, the strain is not nearly as intuitive as the stress.
That being said, CS can nonetheless be used to initialize an iterative algorithm that aims at minimizing the stress incrementally, for example, the SMACOF algorithm of \citet{de2009multidimensional}. 

We now consider the question of consistency. We do this in the following two propositions. 
We start with the fixed-$n$ asymptotic regime. 

\begin{proposition}
\label{prp:CS consistency}
In the sample setting of \secref{setting}, suppose that based on the samples we produce $(\hat \delta_{ij})$ consistent for $(\delta_{ij})$. Let $(\hat\nu_k,\hat u_{k})$, $k=1,\dots,d_e$, be the top $d_e$ eigenpairs of $\hat B$, and $\hat p_i:=(\sqrt{\hat \nu_1^+}\hat u_{i,1}, \dots, \sqrt{\hat\nu_{d_e}^+} \hat u_{i,d_e})$. Then CS applied to $(\hat \delta_{ij})$ is consistent in the sense that it asymptotically ($m_j \to \infty$ for all $j$, with $n$ fixed) recovers an embedding that results from applying CS to $(\delta_{ij})$, for which the rate of convergence is determined as follows: For some $C>0$ only depending on $(\delta_{ij})$ and $d_e$, 
\begin{align}
\label{hatu consistency}
\min_{\{p_1,\dots,p_n\}} \sum_{i=1}^n\|p_i - \hat p_i\|_2^2 \le Cn^2 
\begin{cases}
\|A-\hat A\|_2^2, & \text{if } \nu_{d_e} > 0; \\
\|A-\hat A\|_2, & \text{if } \nu_{d_e} = 0.
\end{cases}
\end{align}
where the minimum is over the set of embedding points minimizing the strain in \eqref{strain}.
\end{proposition}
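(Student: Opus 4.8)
The plan is to track the perturbation through each step of the CS algorithm. The starting point is the hypothesis that $\hat\delta_{ij} \to \delta_{ij}$ for all $i,j$ (with $n$ fixed), so that $\hat A \to A$ entrywise, hence in Frobenius norm $\|A - \hat A\|_2 \to 0$. First I would observe that the double-centering step $A \mapsto B$ is a linear map of matrices, implemented by $B = JAJ$ with $J = I - \frac1n \mathbf{1}\mathbf{1}^\top$, so it is $1$-Lipschitz-up-to-a-constant in Frobenius norm: $\|B - \hat B\|_2 \le \|J\|_2^2 \|A - \hat A\|_2 \le \|A - \hat A\|_2$. Thus the whole problem reduces to: how stably do the quantities $\sqrt{\nu_k^+}\, u_{i,k}$ depend on the symmetric matrix $B$?

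The core of the argument is a matrix-perturbation estimate for the map $B \mapsto P$, where $P$ is the $n \times d_e$ matrix whose columns are $\sqrt{\nu_k^+}\, u_k$ — equivalently, $PP^\top$ is the ``best rank-$d_e$ PSD approximation'' of $B$ obtained by truncating to the top $d_e$ eigenvalues and zeroing negative ones. I would split into the two cases appearing in the bound. When $\nu_{d_e} > 0$, there is a spectral gap between the $d_e$-th and $(d_e+1)$-th eigenvalues of $B$ (note $\nu_{d_e+1}$ could be negative, but the gap $\nu_{d_e} - \nu_{d_e+1} > 0$ is what matters), and the Davis–Kahan theorem gives that the top-$d_e$ eigenprojection is Lipschitz in $\|B - \hat B\|_2$ with constant depending on that gap; combined with Weyl's inequality for the eigenvalues $\nu_k$ (which are $1$-Lipschitz), and the fact that $x \mapsto \sqrt{x^+}$ is $\tfrac12(\nu_{d_e})^{-1/2}$-Lipschitz on $[\nu_{d_e}/2, \infty)$ once $\hat B$ is close enough, one gets $\min_{\{p_i\}}\sum_i \|p_i - \hat p_i\|^2 \lesssim \|B - \hat B\|_2^2$, where the minimum over relabelings/orthogonal transformations of the target configuration absorbs the rotational ambiguity of eigenvectors within the top-$d_e$ eigenspace. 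Multiplying through the constants (the $n^2$ comes from passing from the Frobenius norm of $P - \hat P$ to the sum of squared row norms, together with crude bounds on the centering and on $\|B\|_2$ in terms of $n$ and $(\delta_{ij})$) yields the first branch of \eqref{hatu consistency}.

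The second case, $\nu_{d_e} = 0$, is the delicate one and I expect it to be the main obstacle. Here two degeneracies coincide: the spectral gap at level $d_e$ may vanish (so Davis–Kahan is useless for pinning down individual eigenvectors), and simultaneously $\sqrt{x^+}$ has infinite-slope behaviour at $x = 0$, so eigenvalues near zero — whether slightly positive or slightly negative — contribute to the embedding with an amplitude $\sqrt{|\hat\nu_k|}$ that is only $O(\|B-\hat B\|_2^{1/2})$, not $O(\|B-\hat B\|_2)$. The right way to handle this is to compare $\hat P \hat P^\top$ with $P P^\top$ directly at the level of the PSD truncation operator rather than eigenvector-by-eigenvector: one shows that $\|\hat P\hat P^\top - PP^\top\|_2 \lesssim \|\hat B - B\|_2 + \sum_{k \le d_e,\, \nu_k = 0} |\hat\nu_k|$, and bounds the last sum by $d_e \max_k |\hat\nu_k - \nu_k| \le d_e \|\hat B - B\|_2$ via Weyl — wait, that would give the square-root-free rate, so the genuine loss must come from converting $\|\hat P\hat P^\top - PP^\top\|_2$ back to $\|\hat P - P\|_2$: this step costs a square root because $P$ itself is only of size $\sqrt{\|B\|_2}$ and has a vanishing singular value, so the standard ``matrix square root is $1/2$-Hölder'' estimate $\|\hat P - P\|_2 \lesssim \|\hat P\hat P^\top - PP^\top\|_2^{1/2}$ is tight and unavoidable. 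Assembling these pieces, and again taking the minimum over the (now larger, because of the degeneracy) set of strain-minimizing configurations to kill the ambiguity, gives the second branch. Throughout, I would be careful that ``recovers an embedding that results from applying CS to $(\delta_{ij})$'' is exactly the statement that the minimum on the left of \eqref{hatu consistency} is over the full solution set of the strain-minimization problem \eqref{strain}, which by the strain-optimality of CS noted in \secref{classical scaling} is precisely the set of CS outputs — so no separate identification of CS as a strain minimizer is needed beyond what the text already asserts.
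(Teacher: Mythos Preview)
Your overall architecture --- double-centering is a Frobenius contraction, then Davis--Kahan for eigenvectors plus Weyl for eigenvalues --- matches the paper's proof. The $\nu_{d_e}>0$ branch is essentially the same argument.

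Where you diverge from the paper is the $\nu_{d_e}=0$ case, and here you are making it harder than it needs to be. The paper does \emph{not} change decomposition: it keeps the same split
\[
\sum_i\|p_i-\hat p_i\|^2 \;\le\; 2\nu_1\sum_{k\le d_e}\|u_k-\hat u_k\|^2 \;+\; 2\sum_{k\le d_e}\bigl(\sqrt{\nu_k^+}-\sqrt{\hat\nu_k^+}\bigr)^2,
\]
and the first sum is still controlled by Davis--Kahan with constant $\sum_i 8/\min_{j\ne i}|\zeta_j-\zeta_i|^2$, where the $\zeta_i$ are the \emph{distinct} eigenvalues of $B$. The point you are missing is that those gaps are always positive, regardless of whether $\nu_{d_e}=\nu_{d_e+1}$; the ambiguity from a repeated eigenvalue at level $d_e$ is absorbed by the minimum over $\{p_1,\dots,p_n\}$ (equivalently, over choices of orthonormal eigenvectors within the $\zeta_c$-eigenspace). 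So Davis--Kahan is not ``useless'' in the degenerate case --- it still delivers $O(\|B-\hat B\|_2^2)$ for the eigenvector term. The entire square-root loss comes from the \emph{second} sum, via the elementary scalar bound $\bigl(\sqrt{\nu_k^+}-\sqrt{\hat\nu_k^+}\bigr)^2 \le |\nu_k-\hat\nu_k|$ when $\nu_k\le 0$, which Weyl converts to $\le\|B-\hat B\|_2$. No detour through $PP^\top$ or a matrix-square-root H\"older estimate is required.

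This same remark also fixes a small gap in your $\nu_{d_e}>0$ argument: you assert a spectral gap between $\nu_{d_e}$ and $\nu_{d_e+1}$, but $\nu_{d_e}=\nu_{d_e+1}>0$ is perfectly possible. The block-by-distinct-eigenvalue Davis--Kahan, together with the minimum over strain minimizers, is what handles that uniformly. Finally, on the $n^2$: your centering bound $\|B-\hat B\|_2\le\|A-\hat A\|_2$ is correct (using the operator norm of $J$), so the $n^2$ in the statement is just the paper's cruder Frobenius bound $\|H\|_2^4\le n^2$; your explanation via ``row norms vs.\ Frobenius'' is off, since those coincide.
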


We now consider the $n\to\infty$ asymptotic regime in the form of the following proposition. Some related consistency (and very recent) results are available in \cite[Sec 5]{kroshnin2022infinite} and in \cite[Sec 7]{lim2022classical}. (In fact, in our proof arguments rely, in part, on the former.)
 
\begin{proposition}
\label{prp:CS n-consistency}
In the population setting of \secref{setting}, and with $\delta$ being a metric, let $\bbQ$ be a subset of densities with respect to $\lambda$ that is compact and separable for $\delta$. In this context, suppose that $Q_1, \dots, Q_n$ are iid copies of a stochastic process $Q$ supported on $\bbQ$ such that
\begin{align}
\label{measure zero}
\bbP\big(\delta^4(q_1, Q) - \delta^4(q_2, Q) = s\big) = 0, \quad \text{for all $q_1 \ne q_2$ in $\bbQ$ and all $s \in \bbR$}.
\end{align}
Then, there exists a function $\pi_n: \bbQ \to \bbR^{d_e}$ coinciding with CS when applied to $Q_1, \dots, Q_n$ such that, with probability one, 
\begin{align}
\E_Q \Big[\min_{\pi\in\Pi} \|\pi_n(Q) - \pi(Q)\|^2\Big] \to 0, \quad \text{as } n\to\infty,
\end{align}
where $\Pi$ is the collection of functions $\pi: \bbQ \to \bbR^{d_e}$ which minimize the following notion of expected strain
\begin{align}
\E\Big[\big(\<\pi(Q), \pi(Q')\> - b(Q,Q')\big)^2\Big],
\end{align}
where 
\begin{align}
\label{bqdef}
b(q,q') = -\frac12 \Big(\delta(q,q')^2 - \E[\delta(q,Q')^2] - \E[\delta(Q,q')^2] + \E[\delta(Q,Q')^2]\Big),
\end{align}
$Q$ and $Q'$ being iid copies.
\end{proposition}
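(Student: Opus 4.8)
The plan is to recast both the finite-sample and the limiting versions of classical scaling as instances of classical MDS on a metric measure space and then to transfer a spectral-consistency statement for the latter. Write $\mu$ for the law of $Q$ on $\bbQ$ and let $\mu_n$ denote the empirical distribution of $Q_1,\dots,Q_n$. For a Borel probability measure $\rho$ on $\bbQ$, let $b_\rho$ be the kernel obtained by double-centering $-\tfrac12\delta^2$ with respect to $\rho$, so that $b_\mu$ is the kernel $b$ of \eqref{bqdef} and, evaluated at the sample points, $\big(b_{\mu_n}(Q_i,Q_j)\big)$ is the matrix $B$ of \secref{classical scaling}; let $T_\rho$ be the integral operator with kernel $b_\rho$ on $L_2(\rho)$. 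With this notation, CS applied to $Q_1,\dots,Q_n$ is exactly classical MDS of $(\bbQ,\delta,\mu_n)$: its output is read off from the top $d_e$ eigenpairs of $T_{\mu_n}$, and the Nyström extension of the corresponding eigenfunctions yields a map $\pi_n:\bbQ\to\bbR^{d_e}$ that agrees with that output at every $Q_j$ and is the natural candidate. Likewise, minimizing the expected strain is the problem of best rank-$d_e$ positive-semidefinite approximation of $b_\mu$ in $L_2(\mu\times\mu)$, whose solutions are governed by the spectral decomposition of $T_\mu$: if $T_\mu$ has $L_2(\mu)$-orthonormal eigenfunctions $\phi_1,\phi_2,\dots$ with eigenvalues $\nu_1\ge\nu_2\ge\cdots$, then $\Pi$ is the set of maps $q\mapsto O\,(\sqrt{\nu_1^+}\phi_1(q),\dots,\sqrt{\nu_{d_e}^+}\phi_{d_e}(q))$ with $O\in O(d_e)$, together with the extra symmetries created by any coincident or vanishing eigenvalues among $\nu_1,\dots,\nu_{d_e+1}$.

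The first step is a uniform law of large numbers for the kernel. Since $\delta$ is continuous on the compact set $\bbQ$, the family $\{\delta(q,\cdot)^2:q\in\bbQ\}$ is uniformly bounded and equicontinuous, hence Glivenko--Cantelli, so $\sup_{q}\big|\int\delta(q,\cdot)^2\,\d\mu_n-\int\delta(q,\cdot)^2\,\d\mu\big|\to0$ almost surely, and likewise $\iint\delta^2\,\d\mu_n\,\d\mu_n\to\iint\delta^2\,\d\mu\,\d\mu$; separability of $(\bbQ,\delta)$ is what makes the relevant suprema measurable. Combining these gives $\sup_{q,q'\in\bbQ}\big|b_{\mu_n}(q,q')-b_\mu(q,q')\big|\to0$ almost surely.

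The second step turns this into convergence of the top spectral data of $T_{\mu_n}$ towards that of $T_\mu$, and then into convergence of the embeddings. Because $T_{\mu_n}$ and $T_\mu$ live on the different spaces $L_2(\mu_n)$ and $L_2(\mu)$, I would invoke the framework of \cite{kroshnin2022infinite}: uniform kernel convergence there yields convergence of the ordered eigenvalues $\hat\nu_k\to\nu_k$ and of the spectral projections onto the top-$d_e$ positive eigenspaces, once both sides are transported to a common space through the kernels and the positive parts are tracked so that the conclusion is stable under eigenvalues crossing $0$. Since Nyström extension is Lipschitz in the kernel, this gives that the extended empirical eigenfunctions converge in $L_2(\mu)$ to the population eigenfunctions, up to the intrinsic rotation/sign indeterminacy; hence there is a (random) $\pi\in\Pi$ with $\E_Q\|\pi_n(Q)-\pi(Q)\|^2\to0$, and bounding by the infimum over $\Pi$ finishes the argument.

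The technical heart --- and where assumption \eqref{measure zero} enters --- is controlling that rotation/sign indeterminacy uniformly along the sequence. When an eigenvalue of $T_\mu$ at the $d_e$ level is repeated or equals $0$, the top-$d_e$ eigenspace, equivalently the effective dimension of the optimal embedding, is not unique, and one must rule out the empirical eigenstructure oscillating among the admissible configurations in a way that blocks convergence to $\Pi$. I would use the non-atomicity in \eqref{measure zero} --- the quantity there being precisely the fourth-order functional of $\delta$ that surfaces once the squared kernel in the expected strain is expanded --- as the identifiability/genericity ingredient that forbids such spurious ties, both in the limiting minimizer set (forcing $\Pi$ to be exactly the single symmetry orbit above) and, almost surely, in the finite-$n$ matrices, so that an argmin-consistency argument can be run against the uniformly convergent empirical strain functionals and delivers convergence to the whole set $\Pi$ rather than to one fixed element. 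I expect extracting this quantitative separation from \eqref{measure zero}, and discharging the attendant measurability points, to be the main obstacle; the uniform LLN and the operator-perturbation step are comparatively routine once \cite{kroshnin2022infinite} is in hand.
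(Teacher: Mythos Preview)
Your high-level framing --- characterize $\Pi$ via the spectral decomposition of the integral operator with kernel $b$, then establish spectral convergence of an empirical counterpart --- is the paper's as well. Two points of divergence matter, and the second is a genuine gap.

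First, the extension $\pi_n$ is not Nystr\"om. The paper constructs an optimal transport map $T_n$ from $\mu$ to $\mu_n$ (for the cost $\delta^4$), sets $\cV_i := T_n^{-1}(Q_i)$ so that $\mu(\cV_i)=1/n$, and takes $\pi_n$ piecewise constant on this partition, equal on $\cV_i$ to the CS output at $Q_i$. This embeds the empirical eigenvectors as step functions $\hat\phi_k = \sqrt{n}\sum_i u_{i,k}\I_{\cV_i}$ in $L_2(\mu)$, which are genuine eigenfunctions of a transported operator $\fB_n$; Lemma~5.7 of \cite{kroshnin2022infinite} then bounds $\|\fB_n-\fB\|_2$ by $W_4(\mu_n,\mu)$, and $W_4(\mu_n,\mu)\to0$ a.s.\ on a compact separable space finishes the argument via Davis--Kahan, exactly as in the fixed-$n$ proof. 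Your uniform-LLN route for the kernel is not what the paper does, and the ``framework of \cite{kroshnin2022infinite}'' you appeal to is precisely this transport construction, not a generic kernel-perturbation device.

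Second, and more seriously, your reading of \eqref{measure zero} is wrong. It has nothing to do with eigenvalue ties or argmin identifiability; the rotational indeterminacy is absorbed entirely by the $\min_{\pi\in\Pi}$ in the statement and handled by Davis--Kahan as in \prpref{CS consistency}. Condition \eqref{measure zero} is instead the technical input that makes the transport map $T_n$ exist: it ensures $\mu$ has no atoms (so the $Q_i$ are a.s.\ distinct) and, more substantively, is exactly the admissibility condition of \cite{geiss2013optimally} for the cost $\delta^4$, which on a general compact metric space is what guarantees the optimal coupling between $\mu$ and the discrete $\mu_n$ is induced by a map rather than merely a plan. Without it the partition $\{\cV_i\}$, and hence $\pi_n$, cannot be defined. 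The paragraph you devote to this hypothesis should be discarded and replaced accordingly.
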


We anticipate the condition \eqref{measure zero} --- which asks the distribution of $Q$ to put zero mass on what corresponds to algebraic surfaces in the Euclidean setting --- to be mild. It is, for example, satisfied in the context of a location--scale or an exponential family of densities (see \secref{models}) when the stochastic process results from sampling the space parameterizing the family with a Lebesgue density having compact support. This is true for all the metrics studied in \secref{examples}.


\subsubsection{Isomap}
\label{sec:isomap}
Isomap was proposed by Tenenbaum et al \cite{Tenenbaum00ISOmap, silva2002global} for the problem of embedding points in a Euclidean space thought to be on or close to a smooth surface --- the manifold learning problem. It can also be applied in the more general setting of \secref{embedding problem}, where it takes the following form:
\begin{enumerate}[noitemsep]
\item Form a graph with node set $\{q_1, \dots, q_n\}$ and edge set $\{(q_i,q_j): \delta_{ij} \le r\}$, and weigh the edge $(q_i,q_j)$ by $\delta_{ij}$;
\item Compute $(d_{ij})$, where $d_{ij}$ is the shortest-path distance in the graph between nodes $i$ and $j$;
\item Apply CS to $(d_{ij})$.
\end{enumerate}
The connectivity radius $r$ is a tuning parameter of the method.

The original motivation for the first two steps is to estimate the intrinsic distances on the surface. \citet{bernstein2000graph} established some theoretical foundation for this early on; see also \cite{arias2019unconstrained, arias2020minimax, arias2020perturbation} and references therein.
Once the intrinsic distances are computed, a call to CS is made to produce an embedding.
Working with the intrinsic dissimilarity instead the dissimilarity itself is compelling, in particular in a functional setting where the ambient space --- for example, all the densities in $L_2(\lambda)$ as in \secref{PCA} and \secref{CS inner} --- is gigantic.

\begin{remark}
With hindsight, Isomap can be seen as applying the MDS-D method of \citet{kruskal1980designing} for embedding a graph to the neighborhood graph constructed in the first step. (MDS-D reappeared later in the form of the MDS-MAP method of \citet{shang2003localization}.) 
\end{remark}

Let $\bbQ$ be some set that equipped with a metric $\delta: \bbQ \times \bbQ \to \bbR_+$. In that space, the length of a path (i.e., a continuous curve) $\gamma: [a, b] \to \bbQ$ is given by
\begin{align}
\label{length}
\L(\gamma) := \sup_{a = t_0 < \cdots < t_k = b}\ \Sigma(\gamma, t_1, \dots, t_k), && \Sigma(\gamma, t_1, \dots, t_k) := \sum_{i=1}^k \delta(\gamma(t_{i-1}), \gamma(t_i)).
\end{align}
Note that, because we are requiring $\delta$ to be a metric, if we refine $t_1, \dots, t_k$ by inserting $t$ between $t_{i-1}$ and $t_i$, we can only increase $\Sigma(\gamma, \cdot)$, as $\delta(\gamma(t_{i-1}), \gamma(t_i)) \le \delta(\gamma(t_{i-1}), \gamma(t)) + \delta(\gamma(t), \gamma(t_i))$. Therefore, the supremum may be taken over sequences with maximum spacing bounded by any arbitrary $\eta > 0$.
We say that $\gamma$ connects $q,q' \in \bbQ$ if $\gamma(a) = q$ and $\gamma(b) = q'$. 
The intrinsic metric induced by $\delta$ is then defined as
\begin{align}
\delta_\L(q,q') = \inf\big\{\L(\gamma) : \gamma \text{ connects } q, q'\big\}.
\end{align}

Even though $\delta_\L$ is a true metric, in general, some strange things can happen: for example, it is possible that $\delta_\L(q,q_0) = \infty$ for all $q \ne q_0$ in $\bbQ$ (in fact, we will encounter this situation later on); $\delta_\L$ may induce a topology which is very different from the one induced by $\gamma$; and --- although this is much less important for us here --- the existence of shortest paths is not guaranteed in general.
For more on these notions, including examples exhibiting one or more of these issues, see \cite[Ch~2]{burago2001course}.
In all our examples, the situation will be tame for the most part. Even then, the basic consistency results below do not rely on that.

Suppose we have available $q_1, \dots, q_n \in \bbQ$. We proceed as in Isomap to estimate the intrinsic metric using graph distances. Having constructed a neighborhood graph on these points with connectivity radius $r$, we estimate $\delta_\L(q_i,q_j)$ by the shortest-path distance in that neighborhood graph between $q_i$ and $q_j$, denoted $d_{ij}$ above.

The first consistency result says that, if the sample $q_1, \dots, q_n$ becomes dense in $\bbQ$, and the connectivity radius is made to tend to zero slowly enough, then the graph distances are consistent for the intrinsic distances.
We note that obtaining rates is possible, but when more structure is in place; see \cite{bernstein2000graph, arias2019unconstrained}. A metric space is called proper if every closed ball included in it is compact. We assume that $(\bbQ,\delta)$ is proper for the following two propositions. 

\begin{proposition}
\label{prp:Isomap n-consistency}
In the population setting of \secref{setting}, suppose that $q_1, \dots, q_n$ are such that 
\begin{align}
\label{dense}
\eps_n := \sup_{q\in \bbQ} \min_{i \in [n]} \delta(q, q_i) \to 0.
\end{align}
Assume that 
\begin{align}
\label{delta approx}
\text{$\delta_\L(q,q_0) = \delta(q,q_0)[1 + \omega(q,q_0)]$, where $\omega$ is continuous with $\omega(q,q) = 0$ for all $q$.} 
\end{align}
Fix $i$ and $j$ such that $\delta_\L(q_i, q_j) < \infty$. Then, with a choice of connectivity radius $r_n \to 0$ slowly enough that $\eps_n/r_n \to 0$, $d_{ij}$ is consistent for $\delta_\L(q_i, q_j)$ as $n \to \infty$.
\end{proposition}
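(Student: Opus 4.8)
\emph{Overview.} The plan is to sandwich $d_{ij}$ between $(1+o(1))$ multiples of $\delta_\L(q_i,q_j)$: the upper bound comes from exhibiting a short polygonal path through the sample, and the lower bound from inflating each edge of the shortest graph path to nearly its intrinsic length via \eqref{delta approx} and then invoking the triangle inequality for the metric $\delta_\L$. Two preliminaries are needed. First, $\delta\le\delta_\L$ always (the distance between the endpoints of a curve is at most its length, via the trivial partition in \eqref{length}), so the function $\omega$ in \eqref{delta approx} is nonnegative. Second, \eqref{dense} forces $\bbQ$ to be bounded, so $\bbQ$ equals a closed ball and is thus compact (properness); consequently the continuous function $\omega$, which vanishes on the diagonal, is uniformly small near it, i.e.\ $\eta_n:=\sup\{\omega(q,q'):\delta(q,q')\le r_n\}\to 0$ (a routine compactness argument).

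\emph{Upper bound.} Fix $\rho>0$. Since $\delta_\L(q_i,q_j)<\infty$, choose a path $\gamma:[a,b]\to\bbQ$ connecting $q_i,q_j$ with $\L(\gamma)\le L:=\delta_\L(q_i,q_j)+\rho$. Walk along $\gamma$ greedily: put $t_0=a$, and given $t_{l-1}<b$ let $t_l:=\inf\{t>t_{l-1}:\delta(\gamma(t_{l-1}),\gamma(t))\ge r_n/3\}$ (with $t_l:=b$ if this set is empty), stopping once $t_l=b$. By construction and continuity of $t\mapsto\delta(\gamma(t_{l-1}),\gamma(t))$ one has $\delta(\gamma(t_{l-1}),\gamma(t_l))\le r_n/3$ for every $l$, with equality at each non-final step, so each non-final sub-arc has length $\ge r_n/3$; as these sub-arc lengths sum to $\L(\gamma)\le L$, the number of steps obeys $k\le 3L/r_n+1$. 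By \eqref{dense}, pick for each $l$ a sample point $p_l$ with $\delta(p_l,\gamma(t_l))\le\eps_n$, taking $p_0:=q_i$ and $p_k:=q_j$. For $n$ large enough that $\eps_n<r_n/3$ we get $\delta(p_{l-1},p_l)\le 2\eps_n+r_n/3<r_n$, so $p_0,\dots,p_k$ is a walk in the neighborhood graph and
\begin{align*}
d_{ij}\ \le\ \sum_{l=1}^k\delta(p_{l-1},p_l)\ \le\ 2k\eps_n+\sum_{l=1}^k\delta(\gamma(t_{l-1}),\gamma(t_l))\ \le\ \Big(\tfrac{6L}{r_n}+2\Big)\eps_n+L,
\end{align*}
using $\sum_l\delta(\gamma(t_{l-1}),\gamma(t_l))\le\L(\gamma)$ from \eqref{length}. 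Since $\eps_n/r_n\to0$, $\limsup_n d_{ij}\le L$; letting $\rho\downarrow0$ gives $\limsup_n d_{ij}\le\delta_\L(q_i,q_j)$.

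\emph{Lower bound and conclusion.} The construction above shows that, for $n$ large, $q_i$ and $q_j$ lie in the same component of the graph, so there is a shortest graph path $q_i=p_0,\dots,p_k=q_j$ with $\delta(p_{l-1},p_l)\le r_n$ and $d_{ij}=\sum_l\delta(p_{l-1},p_l)$. By \eqref{delta approx} and $\omega\ge0$, $\delta(p_{l-1},p_l)\ge\delta_\L(p_{l-1},p_l)/(1+\eta_n)$, so summing and using the triangle inequality for $\delta_\L$,
\begin{align*}
d_{ij}\ \ge\ \frac{1}{1+\eta_n}\sum_{l=1}^k\delta_\L(p_{l-1},p_l)\ \ge\ \frac{\delta_\L(q_i,q_j)}{1+\eta_n},
\end{align*}
whence $\liminf_n d_{ij}\ge\delta_\L(q_i,q_j)$. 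Together with the upper bound, $d_{ij}\to\delta_\L(q_i,q_j)$.

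\emph{Main obstacle.} The only genuinely delicate point is the greedy walk: it is essential to bound the number of segments by $k=O(L/r_n)$, so that the accumulated rounding error $k\eps_n$ vanishes under the scaling $\eps_n/r_n\to0$ --- a subdivision chosen merely from the (uncontrolled) modulus of continuity of $\gamma$ at scale $r_n$ would not do. What makes $k$ controllable is exactly that $\gamma$ is rectifiable --- which is where the hypothesis $\delta_\L(q_i,q_j)<\infty$ enters --- together with additivity of the length functional in \eqref{length}. The uniformization of $\omega$ near the diagonal is the sole place compactness of $\bbQ$ is used, and it comes for free from \eqref{dense}.
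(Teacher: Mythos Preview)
Your proof is correct and follows the same two-sided sandwich as the paper's: an upper bound by threading a sample path along a near-geodesic, and a lower bound by inflating each graph edge to its intrinsic length via \eqref{delta approx}.

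Two implementational differences are worth noting. For the upper bound, the paper reparameterizes the near-geodesic $\gamma$ by arc length (invoking \cite[Prop~2.5.9]{burago2001course}) and then takes equally spaced waypoints, whereas you use a greedy first-exit-time walk; both yield $k=O(L/r_n)$ segments, but your construction is self-contained and avoids the reparameterization lemma. For the lower bound, the paper argues that the shortest graph path stays inside a $\delta$-ball of radius $7a$ (using the already-established upper bound $d_{ij}\le 7a$), and invokes properness only on that ball to get uniform continuity of $\omega$. You instead observe that \eqref{dense} already forces $\bbQ$ to be bounded --- hence, by properness, all of $\bbQ$ is compact --- so $\omega$ is uniformly small near the diagonal globally. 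This is a cleaner route and makes the role of properness more transparent, though the paper's localization would survive in settings where $\bbQ$ is unbounded but \eqref{dense} is replaced by a local density condition.
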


The second consistency result is similar, but deals with the sample setting instead of the population setting.

\begin{proposition}
\label{prp:Isomap sample n-consistency}
In the sample setting of \secref{setting}, assume that \eqref{dense} and \eqref{delta approx} hold, and in addition, that based on the samples we produce $(\hat \delta_{ij})$ consistent for $(\delta_{ij})$, uniformly over $i$ and $j$, in the following sense
\begin{align}
\label{delta uniformly consistent}
\text{$\max_{i \ne j} |\hat\delta_{ij} - \delta_{ij}|/\delta_{ij} \to 0$ in probability as $\min_i |\cS_i| \to \infty$.}
\end{align}
Fix $i$ and $j$ such that $\delta_\L(q_i, q_j) < \infty$. Then, with a choice of connectivity radius $r_n \to 0$ slowly enough that $\eps_n/r_n \to 0$, $\hat d_{ij}$ is consistent for $\delta_\L(q_i, q_j)$ as $n \to \infty$.
\end{proposition}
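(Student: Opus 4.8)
The plan is to deduce this from \prpref{Isomap n-consistency} --- its population-setting counterpart, which already absorbs condition \eqref{delta approx} and the genuine geometric work --- by a comparison argument exploiting the \emph{multiplicative} form of the consistency hypothesis \eqref{delta uniformly consistent}. For $\rho > 0$, write $d_{ij}^{(\rho)}$ for the shortest-path distance between $q_i$ and $q_j$ in the population neighborhood graph on $q_1, \dots, q_n$ built with connectivity radius $\rho$, i.e., with edge set $\{(q_k, q_l) : \delta_{kl} \le \rho\}$ and edge weights $\delta_{kl}$; likewise $\hat d_{ij}$ is the corresponding quantity for the graph run on $(\hat\delta_{ij})$ with radius $r_n$. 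The observation I would establish first is that on the event $\mathcal{E}_\xi := \{\max_{k \ne l} |\hat\delta_{kl} - \delta_{kl}|/\delta_{kl} \le \xi\}$, equivalently $(1-\xi)\delta_{kl} \le \hat\delta_{kl} \le (1+\xi)\delta_{kl}$ for all $k \ne l$, one has the sandwich
\begin{align}
\label{isomap sandwich}
(1-\xi)\, d_{ij}^{(r_n/(1-\xi))} \ \le\ \hat d_{ij} \ \le\ (1+\xi)\, d_{ij}^{(r_n/(1+\xi))}.
\end{align}

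Each inequality in \eqref{isomap sandwich} comes from an edge inclusion together with the weight comparison on $\mathcal{E}_\xi$. For the upper bound: on $\mathcal{E}_\xi$ every edge of the population graph with radius $r_n/(1+\xi)$ satisfies $\hat\delta_{kl} \le (1+\xi)\delta_{kl} \le r_n$, so it is also an edge of the empirical graph; feeding a population shortest path of that graph into the empirical graph and bounding its $\hat\delta$-weight edge by edge via $\hat\delta_{kl} \le (1+\xi)\delta_{kl}$ gives $\hat d_{ij} \le (1+\xi) d_{ij}^{(r_n/(1+\xi))}$ (in particular the empirical graph is connected between $i$ and $j$ whenever the right-hand side is finite, which resolves any connectivity concern). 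For the lower bound: on $\mathcal{E}_\xi$ every empirical edge satisfies $\delta_{kl} \le \hat\delta_{kl}/(1-\xi) \le r_n/(1-\xi)$, so the empirical graph is a subgraph of the population graph with radius $r_n/(1-\xi)$, and an empirical shortest path, viewed in that larger graph, has $\delta$-length at most $\hat d_{ij}/(1-\xi)$; hence $d_{ij}^{(r_n/(1-\xi))} \le \hat d_{ij}/(1-\xi)$.

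To conclude, I would fix $\xi > 0$: since $\xi$ is held constant, the radii $r_n/(1\pm\xi)$ still tend to $0$ and still satisfy $(1\pm\xi)\,\eps_n/r_n \to 0$, so \prpref{Isomap n-consistency} applies to each of them (for the same $i,j$, which satisfy $\delta_\L(q_i,q_j) < \infty$) and yields $d_{ij}^{(r_n/(1-\xi))} \to \delta_\L(q_i,q_j)$ and $d_{ij}^{(r_n/(1+\xi))} \to \delta_\L(q_i,q_j)$ as $n \to \infty$, these limits being deterministic since the $q_i$ are. Plugging these into \eqref{isomap sandwich} shows that on $\mathcal{E}_\xi$ we have $|\hat d_{ij} - \delta_\L(q_i,q_j)| \le \xi\,\delta_\L(q_i,q_j) + o(1)$ as $n \to \infty$. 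Since \eqref{delta uniformly consistent} gives $\bbP(\mathcal{E}_\xi) \to 1$, since $\delta_\L(q_i,q_j) < \infty$, and since $\xi > 0$ is arbitrary, it follows that $\hat d_{ij} \to \delta_\L(q_i,q_j)$ in probability.

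The conceptual content is light; I expect the only real care to lie in the bookkeeping of the two asymptotics, $n \to \infty$ and $\min_i |\cS_i| \to \infty$. One must let the sample sizes grow with $n$ (a diagonal choice) so that the high-probability event $\mathcal{E}_\xi$ can be paired with the deterministic-in-$n$ convergence supplied by \prpref{Isomap n-consistency}; and one must check --- as done above --- that dilating the connectivity radius by the fixed factors $1 \pm \xi$ leaves the hypotheses $r_n \to 0$ and $\eps_n/r_n \to 0$ intact, which it does because $\xi$ is sent to $0$ only after $n \to \infty$.
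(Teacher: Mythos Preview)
Your proposal is correct and follows essentially the same route as the paper: sandwich $\hat d_{ij}$ between two population graph distances at rescaled connectivity radii and then invoke \prpref{Isomap n-consistency} on each side. The paper uses the cruder fixed factors $r_n/2$ and $2r_n$ in place of your $r_n/(1\pm\xi)$, but the edge-inclusion and edge-weight comparison arguments are identical in spirit.
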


The following result will be useful when deriving an induced intrinsic metric. If the result does apply and the tensor $A$ in \eqref{intrinsic limit} is constant, then the induced intrinsic metric is Euclidean. 
Below, ${\rm GL}(\bbR^p)$ denotes the general linear group of $\bbR^d$.

\begin{proposition}
\label{prp:intrinsic limit}
Let $\bbQ$ denote an open connected subset of $\bbR^p$ equipped with its Euclidean norm $\|\cdot\|$. 
Assume that
\begin{gather}
\text{$\delta$ is equivalent to the Euclidean metric,} \label{delta equivalent}
\end{gather}
and that
\begin{gather}
\begin{gathered}
\delta(q,q_0)^2 = (q-q_0)^\top A(q_0) (q-q_0) [1 + \omega(q,q_0)], \label{intrinsic limit} \\
\text{where $A: \bbQ \to {\rm GL}(\bbR^p)$ is continuous,} \\
\text{and $\omega:\bbQ\times\bbQ \to \bbR$ is continuous with $\omega(q,q) = 0$.}
\end{gathered}
\end{gather}
Then $\delta_\L$ coincides with the Riemannian metric on $\bbQ$ defined by the tensor $A$. 
And \eqref{delta approx} holds.
\end{proposition}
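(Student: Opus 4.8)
The plan is to show first that the $\delta$-length of any curve equals its Riemannian length for the metric tensor $A$, from which $\delta_\L=d_A$ follows by taking infima, and then to deduce the local expansion \eqref{delta approx} from the standard fact that a Riemannian distance agrees to first order with the ``frozen-coefficient'' norm. Throughout, replace $A$ by its symmetrization $\tilde A:=(A+A^\top)/2$, which changes nothing in \eqref{intrinsic limit}; note that $\tilde A(q_0)$ is positive definite for every $q_0$, because $\delta(q,q_0)^2>0$ and $1+\omega(q,q_0)>0$ for $q$ close enough to $q_0$, so the quadratic form $v\mapsto v^\top A(q_0)v$ is positive on a punctured neighborhood of the origin and hence, by homogeneity, on $\bbR^p\setminus\{0\}$. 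Thus $\tilde A$ is a genuine continuous Riemannian metric tensor, and we write $L_A$ for its length functional (so $L_A(\gamma)=\int\sqrt{\dot\gamma^\top A(\gamma)\dot\gamma}$ for rectifiable $\gamma$, $+\infty$ otherwise) and $d_A$ for the associated distance; since on any compact subset of $\bbQ$ the tensor $\tilde A$ is uniformly elliptic and $\delta$ is comparable to the Euclidean metric by \eqref{delta equivalent}, $d_A$ is a metric inducing the Euclidean topology, and is continuous on $\bbQ\times\bbQ$.

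The first and main step is the claim that $\L(\gamma)=L_A(\gamma)$ for every continuous $\gamma:[a,b]\to\bbQ$ (both sides being $+\infty$ simultaneously). The image $\gamma([a,b])$ is compact, so uniform ellipticity of $A$ there together with $\delta\asymp\|\cdot\|$ shows that $\gamma$ is $\delta$-rectifiable iff Euclidean-rectifiable iff $L_A(\gamma)<\infty$; assume then $\gamma$ rectifiable and, after reparametrizing, of unit Euclidean speed, hence $1$-Lipschitz and differentiable a.e. For the uniform partition $a=t_0^{(n)}<\cdots<t_n^{(n)}=b$ of mesh $h_n\to0$ write $\Sigma(\gamma,t_1^{(n)},\dots,t_n^{(n)})=\int_a^b g_n$, where $g_n$ is the step function equal to $\delta(\gamma(t_{i-1}^{(n)}),\gamma(t_i^{(n)}))/h_n$ on the $i$-th subinterval. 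At each differentiability point $t$ of $\gamma$, \eqref{intrinsic limit} together with $\omega(q,q)=0$, continuity of $A$ and $\omega$, and $\gamma(t_i^{(n)})-\gamma(t_{i-1}^{(n)})=\dot\gamma(t)h_n+o(h_n)$ give $g_n(t)\to\sqrt{\dot\gamma(t)^\top A(\gamma(t))\dot\gamma(t)}$; and $g_n$ is bounded by a constant, since $\delta(\gamma(s),\gamma(s'))\le C\|\gamma(s)-\gamma(s')\|\le C|s-s'|$ by \eqref{delta equivalent} and $1$-Lipschitzness. Dominated convergence on $[a,b]$ then gives $\Sigma(\gamma,t_1^{(n)},\dots,t_n^{(n)})\to L_A(\gamma)$. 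On the other hand $\L(\gamma)=\lim_n\Sigma(\gamma,t_1^{(n)},\dots,t_n^{(n)})$: the inequality $\L(\gamma)=\sup_P\Sigma(\gamma,P)\ge\limsup_n$ is trivial, and for the reverse one fixes any partition $P^*$ and notes that going from the uniform partition to its common refinement with $P^*$ increases $\Sigma$ (text before \eqref{length}) while changing it by at most $2|P^*|\sup\{\delta(\gamma(s),\gamma(s')):|s-s'|\le h_n\}\to0$ (triangle inequality, inserting the points of $P^*$ one at a time, and uniform continuity of $\gamma$); hence $\liminf_n\Sigma(\gamma,\cdot)\ge\Sigma(\gamma,P^*)$, and taking the supremum over $P^*$ gives $\liminf_n\ge\L(\gamma)$ (this is the standard fact that length is the limit of inscribed polygonal sums; see \cite[Ch.~2]{burago2001course}). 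Combining, $\L(\gamma)=L_A(\gamma)$. Taking the infimum over curves connecting $q$ and $q'$ --- and using that for $L_A$ the infimum over all continuous (equivalently rectifiable, equivalently piecewise-$C^1$) curves is the Riemannian distance --- yields $\delta_\L=d_A$.

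It remains to prove \eqref{delta approx}. For this I would establish the local estimate $d_A(q,q_0)=\|q-q_0\|_{A(q_0)}\,(1+o(1))$ as $q\to q_0$, uniformly for $q_0$ in compact subsets, where $\|v\|_{A(q_0)}^2:=v^\top A(q_0)v$. The upper bound uses the straight segment $s\mapsto q_0+s(q-q_0)$, whose $L_A$-length is $\int_0^1\|q-q_0\|_{A(q_0+s(q-q_0))}\,ds\le\sqrt{1+\varepsilon}\,\|q-q_0\|_{A(q_0)}$ once $\|q-q_0\|$ is small, by uniform continuity of $A$. The lower bound uses uniform ellipticity on a small ball $B(q_0,\rho)$ (so $A\succeq(1-\varepsilon)A(q_0)$ there): a curve from $q_0$ to $q$ that stays in $B(q_0,\rho)$ has $L_A$-length $\ge\sqrt{1-\varepsilon}\,\|q-q_0\|_{A(q_0)}$, and one that leaves $B(q_0,\rho)$ has $L_A$-length $\ge c\rho$ for a fixed $c>0$; choosing $\rho$ with $\sqrt{1+\varepsilon}\,\|q-q_0\|_{A(q_0)}<c\rho$ eliminates the latter as near-minimizers, giving $d_A(q,q_0)\ge\sqrt{1-\varepsilon}\,\|q-q_0\|_{A(q_0)}$. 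Together with $\delta(q,q_0)^2=\|q-q_0\|_{A(q_0)}^2[1+\omega(q,q_0)]$ from \eqref{intrinsic limit}, this gives $\delta_\L(q,q_0)/\delta(q,q_0)\to1$ as $q\to q_0$, locally uniformly in $q_0$. Setting $\omega'(q,q_0):=\delta_\L(q,q_0)/\delta(q,q_0)-1$ for $q\ne q_0$ and $\omega'(q,q)=0$, this function is continuous off the diagonal (as $\delta_\L=d_A$ and $\delta$ are continuous and $\delta>0$ there) and tends to $0$ approaching the diagonal, hence is continuous on $\bbQ\times\bbQ$ with $\omega'(q,q)=0$ --- which is exactly \eqref{delta approx}.

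The main obstacle is the first step: identifying $\L(\gamma)=L_A(\gamma)$ for merely rectifiable (not $C^1$) curves, i.e. the dominated-convergence/step-function computation combined with the ``length equals limit of polygonal sums'' lemma. The positive-definiteness of $\tilde A$, the reduction $\delta_\L=d_A$, and the first-order estimate for $d_A$ are then routine length-space and Riemannian bookkeeping.
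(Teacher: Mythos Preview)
Your argument is correct and covers the same two steps as the paper, but the execution differs in each step.

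For the identity $\L_\delta(\gamma)=\L_A(\gamma)$, the paper avoids any differentiability of $\gamma$: it works directly with the ratio $u(s,t):=\delta(\gamma(s),\gamma(t))/\|A(\gamma(t))^{1/2}(\gamma(s)-\gamma(t))\|$, observes that $u$ is uniformly continuous on $[0,1]^2$ with $u(t,t)=1$, and concludes that $\Sigma_\delta(\gamma,\cdot)=(1\pm\eps)\Sigma_A(\gamma,\cdot)$ for any sufficiently fine partition, hence $\L_\delta(\gamma)=\L_A(\gamma)$ by taking the supremum. Your route --- unit-speed reparametrization, a.e.\ differentiability, step functions, and dominated convergence --- reaches the same conclusion with more machinery; both are valid, but the paper's ratio argument is shorter and sidesteps Rademacher-type reasoning. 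A small quibble: you invoke \eqref{delta equivalent} to get the Lipschitz-type bound $\delta(\gamma(s),\gamma(s'))\le C\|\gamma(s)-\gamma(s')\|$ used to dominate $g_n$, but topological equivalence alone does not give this; it really comes from \eqref{intrinsic limit} on the compact image (for nearby points), which is all you need once the mesh is small.

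For \eqref{delta approx}, the paper isolates the required first-order comparison between $d_A$ and the frozen-coefficient norm $\|A^{1/2}(q_0)(\cdot-q_0)\|$ as a standalone lemma (\lemref{bruce}), proved from scratch by the same two moves you sketch: the straight segment for the upper bound, and for the lower bound the first-exit-time from the ellipsoid $\{q':\|A^{1/2}(q_0)(q'-q_0)\|\le\|A^{1/2}(q_0)(q-q_0)\|\}$ combined with Jensen's inequality. The paper's version yields an explicit error $\omega(q,q_0)=\max_{q'\in\cE(q,q_0)}\|A^{1/2}(q')A^{-1/2}(q_0)-I\|$, whereas yours gives an $o(1)$ locally uniform in $q_0$; both suffice for the continuous $\omega'$ you define at the end.
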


\begin{remark}
\label{rem:intrinsic limit}
Property \eqref{delta equivalent} holds, for example, when $\Theta$ is bounded and $\delta$ can be extended to a metric on $\bar\Theta$ and, as a function, is continuous.
Property \eqref{intrinsic limit} holds, for example, when $\delta$ is twice continuously differentiable with nonsingular Hessian, in which case $A(q_0) = \frac12 H(q_0,q_0)$, where $H$ is the $p\times p$ top diagonal block of the Hessian matrix.
\end{remark}


\section{Examples}
\label{sec:examples}

In this section, we go through some emblematic examples of statistical models and consider the use of various classical metrics and divergences for the purpose of embedding in the context of \secref{setting}. Effectively, we examine the metric and intrinsic metric induced on the model. 

As we are considering two particular embedding methods, CS and Isomap, a central question is whether they produce an embedding that is accurate. We note that this is a tall order for any method because the embedding is constrained to be in a Euclidean space of given dimension.

\begin{itemize}
\item {\bf Classical scaling}\quad
In \secref{CS inner} and \secref{classical scaling}, we studied the behavior of CS under the $L_2$ and other general metrics, both in the population and sample settings. In particular, CS is exact (population setting) and consistent (sample setting) when the metric induced on the parameter space is Euclidean. 
\item {\bf Isomap}\quad
In \secref{isomap}, we studied the behavior of Isomap under a general (ambient) metric, both in the population and sample settings. In \prpref{Isomap n-consistency} (population setting) and in \prpref{Isomap sample n-consistency} (sample setting), we saw conditions under which the estimation of intrinsic distances based on graph distances is consistent. Assuming this is the case, still, Isomap can only be consistent if the intrinsic metric induced on the parameter space is Euclidean. This is because Isomap applies CS to the estimated intrinsic distances. 
\end{itemize}
Since the parameterization is arbitrary, we will be particularly interested in the question of whether there is a parameterization of the underlying model that makes the induced (resp.~intrinsic) metric Euclidean.
In probing this question, we will typically start with an arbitrary parameterization of the model (although a smooth one if possible) and then consider the question from the perspective of finding a re-parameterization that has the desired property.

\subsection{Examples of models}
\label{sec:models}
The general setting of a dominated statistical model covers a lot of important situations. This is a setting where we have 
\begin{align}
\label{model}
\text{a family of densities $\big\{f_\theta : \theta \in \Theta\big\}$ on $\bbR^d$},
\end{align}
with $\Theta$ a subset of a Euclidean space $\bbR^p$ --- assumed to be open and connected unless otherwise stated. 
The densities are with respect to a measure $\lambda$ on $\bbR^d$, which in our examples is either the Lebesgue measure or the counting measure.
The model will be assumed identifiable unless otherwise stated.
This is the framework that will consider in most of our examples, but not all, as we also consider an infinite-dimensional model.

We pause to note that whether the underlying model is parametric or not is not directly irrelevant, at least from a methodology perspective, as the methods that we consider are agnostic to the assumed model. 
The models are simply used as benchmarks to evaluate the behavior or performance of the methods. 

\subsubsection{Location--scale families}
A location–scale family is of the form
\begin{align}
\label{loc scale}
f_\theta(x) = |\theta_{\rm s}|^{-1} f(\theta_{\rm s}^{-1} (x - \theta_{\rm l})),
\end{align}
with $\theta = (\theta_{\rm l}, \theta_{\rm s}) \in \Theta_{\rm l} \times \Theta_{\rm s}$, where $\Theta_{\rm l}$ is typically a subgroup of $(\bbR^d, +)$ and $\Theta_{\rm s}$ is typically a subgroup of ${\rm GL}(\bbR^d)$, while $f$ is some density with respect to the Lebesgue measure. 
(For a square matrix $A$, $|A|$ denotes its determinant.)
In standard treatments, the density $f$ is known, but again, this is irrelevant here as the methodology is blind to this. 
The quintessential example of a location--scale family is the normal family of distributions. 

The family is a location family if $\Theta_{\rm s}$ is trivial, in which case it takes the form
\begin{align}
\label{loc}
f_\theta(x) = f(x - \theta),
\end{align}
with $\theta \in \Theta = \bbR^d$ by default; and it is a scale family if, instead, $\Theta_{\rm l}$ is trivial, in which case it takes the form
\begin{align}
\label{scale}
f_\theta(x) = |\theta|^{-1} f(\theta^{-1} x),
\end{align}
with $\theta \in \Theta = {\rm GL}(\bbR^d)$ by default.

A location–scale family is arguably the most basic type of distribution family, but it plays an important role as, in applications, location parameters such as the mean or median (and other quantiles) are often of interest. In FDA and the closely related literature on signal or image registration, such families provide simple but fairly rich models for time-warping \cite[Ch~7]{ramsay2005functional}. The problem is sometimes called `self-modeling' or `shape invariant modeling' after the pioneering work of \citet{lawton1972self}. 
In this line of work, consistency results and/or rates of convergence are obtained, e.g., in \cite{kneip1988convergence, kneip1992statistical, wang1999synchronizing}, and distributional limits are obtained in, e.g., \cite{hardle1990semiparametric, gamboa2007semi, bigot2009estimation, trigano2011semiparametric, kneip1995model, vimond2010efficient}.
We also mention a parallel line of work on the topic motivated by cryo-electron microscopy, where the goal is to recover the shape of a protein (in 3D space) from multiple copies that are frozen and then imaged using an electron microscope \cite{perry2019sample, wang2013exact, perry2018message}. The situation is complicated by the fact that the protein may be in different configurations.

\subsubsection{Exponential families}
An exponential family is --- in our context where the parameterization is arbitrary --- of the form 
\begin{align}
\label{expo}
f_\theta(x) = \exp(\theta^\top T(x) - \Lambda(\theta)) h(x),
\end{align} 
with $\theta \in \Theta$, where $\Theta$ is a convex subset of $\bbR^p$, $T: \bbR^d \to \bbR^p$ is called the sufficient statistic, and $h$ is some density. $\Lambda(\theta)$ is there for normalization. 
The normal family of distributions is an exponential family, but a very special one as it is also a location--scale family. Another important example is the multinomial family, which is often used as a distributional model for a bag-of-words approach to document analysis \cite{carter2009fine, baker1998distributional, mei2007automatic}.
In general, exponential families are particularly important in information geometry \cite{ay2017information, amari2016information}. 



\subsection{Hilbert metrics: $L_2$ and RKHS metrics}
\label{sec:L2}


The $L_2$ metric has historically held a central place in FDA, in particular because it is foundational to PCA --- as already discussed in \secref{PCA}. In the context of a general statistical model as in \eqref{model}, the $L_2$ metric induces the following metric
\begin{align}
\delta_2(f_{\theta},f_{\theta_0})
:= \|f_\theta - f_{\theta_0}\|,
\end{align}
where $\|\cdot\|$ will denote the $L^2(\lambda)$ norm when applied to a function.

\paragraph{Location--scale model}
In a location--scale model as in \eqref{loc scale}, with the base density $f$ being square integrable, the $L_2$ metric induces the following metric: for $\theta = (\theta_{\rm l}, \theta_{\rm s})$ and $\theta_0 = (\theta_{\rm l, 0}, \theta_{\rm s, 0})$,\begin{align}
\Delta_2(\theta,\theta_0)^2 
:= \delta_2(f_{\theta},f_{\theta_0})^2
&= \int \Big[|\theta_{\rm s}|^{-1} f(\theta_{\rm s}^{-1} (x - \theta_{\rm l})) - |\theta_{\rm s, 0}|^{-1} f(\theta_{\rm s, 0}^{-1} (x - \theta_{\rm l, 0}))\Big]^2 \d x \\
&= |\theta_{{\rm s},0}|^{-1} \int \Big[|\theta_{{\rm s},0}\theta_{\rm s}^{-1}| f(\theta_{{\rm s},0} \theta_{\rm s}^{-1} (x - \theta_{\rm l} + \theta_{{\rm l},0})) - f(x)\Big]^2 \d x.
\end{align}

In the case of a {\bf location family} \eqref{loc}, this expression takes the form
\begin{align}
\label{L2 loc}
\Delta_2(\theta,\theta_0)^2 
&= \int \Big[f(x - \theta + \theta_0) - f(x)\Big]^2 \d x.
\end{align}
For the normal location model $\cN(\theta, I)$, this specializes into
\begin{align}
\label{L2 normal}
\Delta_2(\theta, \theta_0) ^2
\propto 1 - \exp(-\|\theta-\theta_0\|^2/4).
\end{align}
For the uniform location model $\Unif(\theta, \theta+1)$ on the real line, it takes the form
\begin{align}
\label{L2 unif}
\Delta_2(\theta, \theta_0)^2
\propto 1 - (1 - |\theta - \theta_0|)_+
= |\theta - \theta_0| \wedge 1.
\end{align}
Clearly, $\Delta_2$ is bounded from above by $2 \|f\|$, and therefore cannot be a Euclidean metric.
In particular, CS cannot be exact (population setting) or consistent (sample setting).

In the case of a {\bf scale family} \eqref{scale}, the induced metric is given by
\begin{align}
\label{L2 scale}
\Delta_2(\theta,\theta_0)^2 
&= |\theta_0|^{-1} \int \Big[|\theta_0\theta^{-1}| f(\theta_0 \theta^{-1} x) - f(x)\Big]^2 \d x.
\end{align}
For the Gamma scale family with shape parameter $k>-1$, where $f(x) \propto x^k \exp(-x) \IND{x > 0}$, this becomes
\begin{align}
\label{L2 Gamma scale}
\Delta_2(\theta,\theta_0) ^2
&\propto \theta^{-1} + \theta_0^{-1} - 4^{k+1} \theta^k \theta_0^k (\theta+\theta_0)^{2k+1}.
\end{align}

\subsubsection{Intrinsic metric}
\label{sec:L2 intrinsic}
Based on \prpref{intrinsic limit} and the accompanying \remref{intrinsic limit}, we know that if $\Theta$ is open and connected --- which is often the case --- and $\Delta_2^2$ defines a topology which is equivalent to the Euclidean topology --- which is the case except in pathological situations --- and, in addition, it is twice continuously differentiable with nonsingular Hessian, then the induced intrinsic metric is Riemannian with tensor given by half the top diagonal block of the Hessian.
 
In some (common) circumstances, that metric tensor can be described as an {\em information matrix} expressed in terms of the derivatives of the statistical model. Indeed, under some conditions on the model $\{f_\theta: \theta \in \Theta\}$, the tensor is given by
\begin{align}
\label{L2 matrix}
\I_2(\theta) := \int \partial f_\theta \partial f_\theta^\top,
\end{align}
where $\partial$ here denotes the differentiation with respect to $\theta$ and the integration is with respect to $\lambda$.
The reader will recognize that this matrix is very similar to the Fisher information matrix, which we introduce later on in \eqref{fisher matrix}. The following result is analogous to \lemref{qmd} given in that subsection. 
(The arguments being the same, we do not provide a separate proof.)

\begin{lemma}
\label{lem:L2 qmd}
In addition to assuming that $f_\theta$ is square integrable for all $\theta$, suppose that $\theta \mapsto f_\theta(x)$ is continuously differentiable for all $x$, that the resulting derivatives are square integrable so that the matrix $\I_2$ above is well-defined, and assume furthermore that $\I_2$ is continuous. 
Under these conditions, for any $\theta_0$ in the interior of $\Theta$,
\begin{align}
\label{L2 qmd}
\Delta_2(\theta, \theta_0)^2
\sim (\theta-\theta_0)^\top \I_2(\theta_0) (\theta-\theta_0),
\quad \text{as } \theta \to \theta_0.
\end{align}
In particular, if the conditions of \prpref{intrinsic limit} are satisfied, then the intrinsic metric induced on $\Theta$ is the Riemannian metric with tensor $\I_2$.
\end{lemma}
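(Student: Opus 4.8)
The plan is to mirror the proof of \lemref{qmd}: establish the first-order expansion of $\Delta_2(\theta,\theta_0)^2 = \int (f_\theta - f_{\theta_0})^2 \d\lambda$ directly, then invoke \prpref{intrinsic limit}. Fix $\theta_0$ in the interior of $\Theta$, and for $\theta$ close enough to $\theta_0$ that the segment $[\theta_0,\theta]$ lies in $\Theta$, set $h := \|\theta - \theta_0\|$ (Euclidean norm) and $u := (\theta-\theta_0)/h$. Since $s \mapsto f_{\theta_0+su}(x)$ is $C^1$ near $[0,h]$ for every $x$, the fundamental theorem of calculus and the chain rule give
\[
f_\theta(x) - f_{\theta_0}(x) = h \int_0^1 \partial f_{\theta_0 + t h u}(x)^\top u \, \d t ,
\]
so, writing $\|\cdot\|$ henceforth for the $L_2(\lambda)$ norm (of a scalar- or $\bbR^p$-valued function),
\[
\Delta_2(\theta,\theta_0)^2 = h^2 \Big\| \int_0^1 \partial f_{\theta_0 + t h u}^\top u \, \d t \Big\|^2 .
\]
It then suffices to show that the $L_2$ norm on the right converges to $\|\partial f_{\theta_0}^\top u\| = (u^\top \I_2(\theta_0) u)^{1/2}$ as $h \to 0$, uniformly over unit vectors $u$: this gives $\Delta_2(\theta,\theta_0)^2 = (\theta-\theta_0)^\top \I_2(\theta_0)(\theta-\theta_0) + o(\|\theta-\theta_0\|^2)$, which is \eqref{L2 qmd}, the ratio reading of $\sim$ being valid whenever $\I_2(\theta_0)$ is nonsingular, as it is under the hypotheses of \prpref{intrinsic limit}.

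The key step, replacing the domination hypotheses one might anticipate, is that $\theta \mapsto \partial f_\theta$ is continuous from $\Theta$ into $L_2(\lambda;\bbR^p)$. For each coordinate $k$ and each $x$, $\theta \mapsto \partial_k f_\theta(x)$ is continuous (the $C^1$ hypothesis makes the gradient of $\theta \mapsto f_\theta(x)$ continuous), so $\partial_k f_{\theta_n}(x) \to \partial_k f_{\theta_0}(x)$ pointwise whenever $\theta_n \to \theta_0$; and $\|\partial_k f_\theta\|^2 = (\I_2(\theta))_{kk}$ is continuous in $\theta$ by hypothesis, so the $L_2$ norms converge as well. By the standard fact that a.e.\ convergence together with convergence of $L_2$ norms forces $L_2$ convergence (a consequence of Fatou's lemma; sometimes attributed to Riesz, and to Scheffé in the $L_1$ case), $\partial_k f_{\theta_n} \to \partial_k f_{\theta_0}$ in $L_2(\lambda)$, which gives the asserted continuity. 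Restricting to a small closed Euclidean ball $\bar B(\theta_0, r) \subset \Theta$, which is compact, the restriction is uniformly continuous, so
\[
\rho(h) := \sup\big\{ \|\partial f_\theta - \partial f_{\theta_0}\| : \|\theta - \theta_0\| \le h \big\} \longrightarrow 0 \quad \text{as } h \to 0 .
\]

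To conclude, Minkowski's integral inequality applied to the $L_2(\lambda)$-valued integral, together with $|\,v^\top u\,| \le \|v\|$ for unit $u$, gives, for every unit vector $u$ and every $h \le r$,
\[
\Big\| \int_0^1 \big( \partial f_{\theta_0 + t h u} - \partial f_{\theta_0} \big)^\top u \, \d t \Big\| \le \int_0^1 \big\| \partial f_{\theta_0 + t h u} - \partial f_{\theta_0} \big\| \, \d t \le \rho(h) ,
\]
uniformly in $u$. Combined with the displayed formula for $\Delta_2^2$, the reverse triangle inequality, and the boundedness of $u \mapsto \|\partial f_{\theta_0}^\top u\|$ on the unit sphere, this yields $\big\| \int_0^1 \partial f_{\theta_0 + t h u}^\top u \, \d t \big\|^2 = u^\top \I_2(\theta_0) u + o(1)$ uniformly in $u$, hence the expansion \eqref{L2 qmd}. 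For the final assertion, comparing that expansion with \eqref{intrinsic limit} along rays $\theta = \theta_0 + s u$, $s \downarrow 0$, forces $A(\theta_0) = \I_2(\theta_0)$ for each $\theta_0$, so \prpref{intrinsic limit} identifies $\delta_\L$ on $\Theta$ with the Riemannian metric of tensor $\I_2$. The main obstacle is the continuity-and-uniformity argument of the second paragraph: the rest is the fundamental theorem of calculus plus triangle inequalities, but it is the Scheffé-type passage that lets the pointwise $C^1$ hypothesis and the mere continuity of $\I_2$ substitute for an integrable envelope.
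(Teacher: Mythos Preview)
Your proof is correct and follows precisely the approach the paper intends: the paper does not give a separate argument but states that ``the arguments being the same'' as for \lemref{qmd} (i.e., Lemma~7.6 in \cite{van2000asymptotic} or Theorem~12.2.1 in \cite{TSH}), and your write-up is exactly that argument transported from $\sqrt{f_\theta}$ to $f_\theta$ --- the fundamental theorem of calculus along the segment, the Riesz/Scheff\'e-type passage (pointwise convergence plus convergence of $L_2$ norms, supplied by continuity of the diagonal of $\I_2$) to get $L_2$-continuity of $\theta\mapsto\partial f_\theta$, and uniform continuity on a compact ball to control the averaged integrand. The identification of the tensor in \prpref{intrinsic limit} with $\I_2$ is handled correctly as well.
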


\paragraph{Location--scale model}
Consider a square integrable {\bf location family} of densities as in \eqref{loc}. It is straightforward to see, and it is detailed in \cite[Ex 7.8]{van2000asymptotic} and \cite[Cor 12.2.1]{TSH}, that the conditions leading to \eqref{L2 qmd} are satisfied if the base density $f$ is continuously differentiable with compact support. In that case the information matrix \eqref{L2 matrix} is constant, equal to $A := \textstyle\int \nabla f \nabla f^\top$, and an application of the lemma gives
\begin{align}
\Delta_2(\theta,\theta_0)^2 
\sim (\theta-\theta_0)^\top A (\theta-\theta_0),
\quad \text{as } \theta \to \theta_0.
\end{align}
Since $\Delta_2$ is continuous, to show that it is equivalent to the Euclidean metric, it suffices to remark that $\Delta_2(\theta,\theta_0) \to \sqrt{2} \|f\|$ when $\|\theta\| \to \infty$. As $f$ is compactly supported, this is immediate. (When $f$ is not compactly supported, but still square integrable, this can be deduced from taking the Fourier transform and then applying the Riemann--Lebesgue lemma.)
For $\Delta_2^2$ to be twice continuous differentiable, we take $f$ to be thrice continuous differentiable (and still compactly supported). 
In such circumstances, therefore, \prpref{intrinsic limit} applies to affirm that the intrinsic metric induced on $\Theta$ is the one induced by the Euclidean metric given by the constant tensor $A$. By a re-parameterization, we may take $A = I$.

With \prpref{intrinsic limit} operating, we have \eqref{delta approx}, and for the estimation of intrinsic distances based on graph distances to be consistent in the population setting, per \prpref{Isomap n-consistency} we still require the data to be dense in the large-$n$ limit \eqref{dense}. This is only possible if $\Theta$ is bounded for $\Delta_2^2$, or equivalently, for the Euclidean metric. Assuming this is the case, still, Isomap can only be consistent if the intrinsic metric induced on $\Theta$ is Euclidean. Since we know that the intrinsic metric is, here, the one induced by the ambient Euclidean metric, it is Euclidean if and only if $\Theta$ is convex.

In conclusion, Isomap is consistent under \eqref{delta approx} --- and also \eqref{delta uniformly consistent} if we are in the sample setting --- for a location model where the base density is compactly supported and $C^3$, and the parameter space is bounded and convex.
We can contrast this with the fact that CS cannot be exact/consistent here since the metric $\Delta_2$ is, itself, never Euclidean.

\begin{remark}
\label{rem:L2 unif}
The smoothness assumption on the base density is important. Indeed, consider again the case of the uniform location model $\Unif(\theta, \theta+1)$, where the base density is a rather nice, piecewise constant and compactly supported function. Then, based on \eqref{L2 unif}, the intrinsic distance between $\theta_0 < \theta$ is bounded from below as follows 
\begin{align}
\sup_{\theta_0 < \theta_1 < \cdots < \theta_k = \theta}\ \sum_{i=1}^k \sqrt{\theta_i - \theta_{i-1}}
&\ge \sqrt{(\theta-\theta_0) k} \to \infty, \quad k \to \infty,
\end{align}
by considering the grid $\theta_i = (\theta-\theta_0)(i/k)$. 
(We are assuming that $k$ is large enough.)
Hence, the intrinsic distance is the trivial metric: the metric which is infinite between any two distinct points.
Furthermore, the same situation may arise even if the base density is H\"older continuous. Indeed, for $\alpha > 0$, consider $f(x) \propto (1-|x|^\alpha)_+$. 
It can be shown that, for $|\theta-\theta_0| \le 1$, 
\begin{align}
\Delta_2(\theta, \theta_0) \asymp 
\begin{cases}
|\theta-\theta_0|, & \text{if } \alpha > 1/2; \\
|\theta-\theta_0| \sqrt{\log(1/|\theta-\theta_0|)} , & \text{if } \alpha = 1/2; \\
|\theta-\theta_0|^{\alpha+1/2}, & \text{if } \alpha < 1/2.
\end{cases}
\end{align}
And from this it is straightforward to see that the intrinsic metric is the trivial metric whenever $\alpha \le 1/2$, while it is Euclidean when $\alpha > 1/2$.
\end{remark}

In the context of a {\bf scale family} as in \eqref{scale}, the conditions leading to \eqref{L2 qmd} are satisfied if the base density $f$ is continuously differentiable with $\int (\nabla f(x)^\top x)^2 \d x < \infty$. 
This comes from the fact that
\begin{align}
\partial f_\theta(x) 
= - |\theta|^{-1} f(\theta^{-1} x) \theta^{-\top}
- |\theta|^{-1} \theta^{-\top} \nabla f(\theta^{-1} x) x^\top \theta^{-\top}.
\end{align}
After a change of variable, we obtain the following Riemannian metric tensor: at $\theta_0$, it takes the form
\begin{align}
\label{L2 intrinsic scale}
\xi \to |\theta_0|^{-1} \int \trace^2\big((f(x) I + x \nabla f(x)^\top) \theta_0^{-1} \xi \big) \d x.
\end{align}
For \prpref{intrinsic limit} to apply, it is again enough that $f$ be compactly supported and thrice continuously differentiable. Under these conditions, then, the intrinsic metric induced on the parameter space is the Riemannian metric given by the tensor \eqref{L2 intrinsic scale}.
Except in dimension $p=1$ (see below), we do not see a re-parameterization of the model that would make this metric tensor constant, and thus the metric does not appear to be Euclidean.

\paragraph{When is the $L_2$ intrinsic metric Euclidean?}
In general, because we are embedding in a Euclidean space and Isomap is consistent when the intrinsic metric is Euclidean (and the underlying domain is convex), we are interested in knowing when there is a parameterization of the model under consideration that leads to an induced intrinsic metric which is Euclidean. 

Assuming the conditions of \prpref{intrinsic limit} hold, the induced intrinsic metric is Riemannian. Therefore, the question is whether there is a change of variables that makes the metric Euclidean. 
The case where the parameter space has dimension $p=1$: It is well-known that such a re-parameterization exists.

More generally, we may approach this question via the metric tensor: If there is a change of variables that renders the information matrix \eqref{L2 matrix} constant, the metric is Euclidean. 
Thus, consider a diffeomorphism $\varphi$ on $\Theta$. The same model, now parameterized by $\varphi$, has $L_2$ information matrix at $\varphi = \varphi(\theta)$ given by
\begin{align}
D\varphi^{-1}(\varphi(\theta)) \I_2(\theta) D\varphi^{-1}(\varphi(\theta))^\top,
\end{align}
and we want to know if there is a choice of $\varphi$ that makes this constant (and nonzero) --- which may be taken to be the identity matrix without loss of generality. We do not know when this is possible in general, but the case of dimension $p = 1$ is straightforward: the differential equation $\varphi'(\theta)^{-2} \I_2(\theta) = 1$ may be solved by taking $\varphi(\theta) = \int_{\theta_0}^\theta \sqrt{\I_2(\xi)} \d \xi$ where $\theta_0 \in \Theta$ is arbitrary.

The question may also be approached via the curvature tensor: In the particular case of interest here, the curvature tensor is zero if and only if the metric is Euclidean \cite[Th 7.3]{lee2006riemannian}. The situation here is a bit particular because 1) there is a single chart $\theta\in\Theta \to f_\theta$ parameterizes the entire model; and the metric tensor \eqref{L2 matrix} is special in the sense that, at least with some additional smoothness assumptions, $I_2(\theta) = - \int \partial_{\theta\theta} f_\theta$, so that the $(j,k)$ entry is equal to $-\int \partial_{\theta_j} \partial_{\theta_k} f_\theta$.
Even then, the curvature tensor remains very complicated and hard to handle.
Although, under this additional smoothness, we are able to recognize another very particular situation as Euclidean: the metric is Euclidean when there is a parameterization for which the information matrix is diagonal. Indeed, in that case each variable in the parameterization is independent of the others, and can be changed so as to make the tensor constant in that direction.

\subsubsection{Sample setting: RKHS metrics}
We now consider the sample setting of \secref{setting}. When instead of densities we only have available samples to work with, the pairwise $L_2$ distances cannot be directly computed but instead need to be estimated. A natural approach to do so is to use the samples to estimate the densities, and then compute the $L_2$ distances on these estimates. 

Suppose we use kernel density estimation (KDE) based on $\kappa$, so that density $q_j$ is estimated by
\begin{align}
\label{KDE}
\hat q_j(x) := \frac1{m_j} \sum_{i=1}^{m_j} \kappa(x, x_{i,j}),
\end{align}
when $\cS_j := \{x_{1,j}, \dots, x_{m_j,j}\}$.
The squared $L_2$ distance between $q_j$ and $q_k$ is then estimated by plug-in
\begin{align}
\hat\delta_2(q_j, q_k)
:= \delta_2(\hat q_j, \hat q_k)
= \|\hat q_j - \hat q_k\|.
\end{align}
We have
\begin{align}
\|\hat q_j - \hat q_k\|^2
= \textstyle \int \hat q_j^2 + \int \hat q_k^2 - 2 \int \hat q_j \hat q_k.
\end{align}
First,
\begin{align}
\label{int hat q_j}
\int \hat q_j^2(x) \d x 
&= \frac1{m_j^2} \sum_{i=1}^{m_j} \int \kappa(x,x_{i,j})^2 \d x + \frac2{m_j(m_j-1)} \sum_{1 \le i < i' \le m_j} \int \kappa(x,x_{i,j}) \kappa(x,x_{i',j}) \d x.
\end{align}
For appropriate functions $f$ on $\bbR^d$ and $g$ on $\bbR^d \times \bbR^d$, define $g * f(x) = \int g(x,y) f(y) \d y$.
Recalling that the $x_{i,j}$ are iid from $q_j$, by the law of large numbers, assuming that $\int \kappa^2 * q_j < \infty$, we have
\begin{align}
\frac1{m_j} \sum_{i=1}^{m_j} \int \kappa(x,x_{i,j})^2 \d x
\mathop{\longrightarrow}_{m_j\to\infty} \textstyle\int \kappa^2 * q_j,
\end{align}
so that the first term on the right-hand side of \eqref{int hat q_j} tends to zero in probability as $m_j\to\infty$. 
Similarly, if $\int (\kappa * q_j)^2 < \infty$, by the law of large numbers for U-statistics, the second term tends to $\int (\kappa * q_j)^2$.
Hence, all together, we find that $\int \hat q_j^2$ tends to $\int (\kappa * q_j)^2$ in probability as $m_j\to\infty$.
In the same way, under analogous conditions, $\int \hat q_k^2$ tends to $\int (\kappa * q_k)^2$ in probability as $m_k\to\infty$, and $\int \hat q_j \hat q_k$ tends to $\int (\kappa * q_j)(\kappa * q_k)$ under the combined conditions, resulting in
\begin{align}
\label{L2 sample limit}
\hat\delta_2(q_j, q_k)^2 
\longrightarrow \textstyle \int (\kappa * q_j - \kappa * q_k)^2
= \delta_2(\kappa * q_j, \kappa * q_k)^2,
\end{align}
in probability as $m_j, m_k \to \infty$.

So far, we have assumed that the kernel function $\kappa$ remains fixed while the samples increase in size. Commonly, however, the kernel function in \eqref{KDE} involves a bandwidth. In fact, such a kernel function is typically of the form $\kappa(x,y) = b^{-d} {\rm kern}(\|x-y\|/b)$ for some function ${\rm kern}$, and KDE is $L_2$-consistent when $b\to0$ sufficiently slowly, under mild assumptions on ${\rm kern}$ and the density being estimated (in our case, one of the densities in the model). 
When this is the case, 
\begin{align}
\hat\delta_2(q_j, q_k)^2 \to \delta_2(q_j, q_k)^2, 
\quad \text{as } m_j, m_k \to\infty.
\end{align}

\paragraph{RKHS metrics}
Going back to \eqref{L2 sample limit}, and the limit on the right-hand side, an application for the Fubini--Tonelli theorem yields
\begin{align}
\|\kappa * q_j - \kappa * q_k\|^2
= \textstyle \iint \textsc{k}(y,z) (q_j(y)-q_k(y)) (q_j(z)-q_k(z)) \d y \d z,
\label{kappa2}
\end{align}
where $\textsc{k}(y,z) := \int \kappa(x,y)\kappa(x,z) \d x$.
As it turns out, and shown to be true, e.g., in \cite[Eq (9)]{sriperumbudur2010hilbert}, this is the squared distance between $q_j$ and $q_k$ in the metric of the {\em reproducible kernel Hilbert space (RKHS)} defined by the kernel $\textsc{k}$. 
And without letting a bandwidth goes to zero in the large sample limit, it turns out that this is a true metric under some conditions on the kernel function; we refer the reader to the same article \cite{sriperumbudur2010hilbert} and references therein for a thorough discussion. 
In fact, RKHS metrics have been studied for quite some time, in particular as a way to derive nonparametric tests for the two-sample problem \cite{berlinet2011reproducing, gretton2006kernel, smola2007hilbert, szekely2004testing, bakshaev2009goodness, klebanov2005n, zinger1992characterization}.

Let us consider, again, a {\bf location model} with base density $f$ as in \eqref{loc}. We use a translation invariant kernel $\textsc{k}$, meaning, of the form $\textsc{k}(x,y) = \k(x-y)$. 
In that case, the metric takes the following form
\begin{align}
\Delta_\textsc{k}(\theta, \theta_0)^2
&:= \iint \k(y - z) (f(y-\theta+\theta_0)-f(y)) (f(z-\theta+\theta_0)-f(z)) \d y \d z.
\end{align}
We note that $\textsc{k}$ is translation invariant if $\kappa$ itself is translation invariant, in which case it is of the form $\textsc{k}(x,y) = \k * \k(x-y)$, and 
\begin{align}
\label{RHKS Delta k * k}
\Delta_\textsc{k}(\theta, \theta_0)^2
= \|\k * f_\theta - \k * f_{\theta_0}\|^2,
\end{align}
Worth mentioning is the fact that such a metric may be used even on densities that are not square integrable, for example, if $\textsc{k}$ is compactly supported 

\paragraph{RKHS intrinsic metrics}
In the context of an RKHS metric with kernel $\textsc{k}$ as in \eqref{kappa2}, the relevant information matrix is the following
\begin{align}
\I_{\textsc{k}}(\theta) := 
\iint \textsc{k}(y,z) \partial f_\theta(y) \partial f_\theta^\top(z) \d y \d z,
\end{align}
and a result analogous to \lemref{L2 qmd} exists.

In fact, in some situations, the smoothness required of the model above may be ported to the assumed smoothness of the kernel function. For example, in the context of a {\bf location model}, if we take $\textsc{k}$ of the form $\textsc{k}(x,y) = \k * \k (x-y)$ for some compactly supported, even, smooth function $\k: \bbR^d \to \bbR_+$, then starting with \eqref{RHKS Delta k * k}, a Taylor expansion of $\k$ (not of $f$) gives that 
\begin{align}
\Delta_\textsc{k}(\theta, \theta_0)^2
\sim (\theta-\theta_0)^\top A (\theta-\theta_0), &&
A := \iiint f(y) f(z) \nabla\k(x-y) \nabla\k(x-z)^\top \d x\d y\d z.
\end{align}
Thus the induced intrinsic metric is given by the ambient Euclidean metric, regardless of the smoothness of $f$. This is in contrast with the $L_2$ metric for which a single discontinuity in $f$ renders the induced intrinsic metric trivial. 

\paragraph{Computation}
When the dimension $d$ is small, the $L_2$ distance between two densities may be computed, or rather, approximated, by direct numerical integration. And we already described a plug-in approach to its estimation. 

On the other hand, if we work directly with an RKHS metric instead of the $L_2$ metric, it is known that its estimation may be conveniently done by the use of a U-statistic \cite{gretton2006kernel}. And a U-statistic can be efficiently approximated in linear time \cite{lin2010fast}.

\subsection{Divergences: Hellinger and Kullback--Leibler}
\label{sec:hellinger}

 
The {\em Hellinger metric}, defined as
\begin{align}
\delta_{\rm H}(f,g)^2 = \|\sqrt{f} - \sqrt{g}\| = \textstyle\int \big[\sqrt{f}-\sqrt{g}\big]^2,
\end{align}
where the integral, as before, is with respect to $\lambda$.
The (symmetrized) {\em Kullback--Leibler (KL) divergence}, defined as
\begin{align}
\delta_{\rm KL}(f,g) = \textstyle\int \big[f \log (f/g) + g \log (g/f)\big], 
\end{align}
are well-known and particularly popular choices when studying situations where an iid sample is involved. 
In general, consider the (symmetrized) divergence based on a convex function $\psi: (0,\infty) \to \bbR$ 
\begin{align}
\label{delta psi}
\delta_\psi(f,g) = \textstyle\int \big[f \psi (g/f) + g \psi (f/g)\big]. 
\end{align}
When $\psi(t) = \frac12 (\sqrt{t}-1)^2$, we recover the squared Hellinger distance, while $\psi(t) = - \log(t)$ gives the KL divergence. By varying $\psi$, we can obtain other divergences. For example, the total variation is obtained from $\psi(t) = \frac12|t-1|$, while the (symmetrized) {\em $\chi^2$-divergence} results from choosing $\psi(t) = (t-1)^2$.
The usual requirement that $\psi(1) = 0$ implies, via Jensen's inequality, that $\delta_\psi(f,g) \ge 0$, while the assumption that $\psi$ is strictly convex implies that equality holds only when $f = g$ almost everywhere.

\begin{remark}
These are called {\em f-divergences}, and for more background, we refer the reader to the lecture notes by \citet{polyanskiy2014lecture}. This is the only type of divergence that we will consider, so that we simply call them divergences.
\end{remark}

\paragraph{Exponential model}
Divergences are particularly well-suited to deal with exponential families. 
(Some do not behave so well under location models, in particular, when the base density has compact support, where the KL and $\chi^2$ divergences, for example, reduce to the trivial metric.)
Therefore, consider an exponential family as in \eqref{expo}. 
For the Hellinger metric, using the fact that $\E_0[\exp(\xi^\top T(X))] = \exp(\Lambda(\xi))$, we have
\begin{align}
\label{H metric expo}
\Delta_{\rm H}(\theta,\theta_0)^2
:= \delta_{\rm H}(f_\theta, f_{\theta_0})^2
= 2 - 2 \exp\big[\Lambda(\tfrac12(\theta+\theta_0)) - \tfrac12 (\Lambda(\theta)+\Lambda(\theta_0))\big].
\end{align}
For example, in the case of the canonical normal location model $\{\cN(\theta, I) : \theta \in \bbR^d\}$, this becomes 
\begin{align}
\label{H normal}
\Delta_{\rm H}(\theta,\theta_0)^2
= 2 - 2 \exp\big[-\tfrac18 \|\theta-\theta_0\|^2\big].
\end{align}
Regardless of the model, the Hellinger metric is bounded by $\sqrt{2}$, and thus cannot be Euclidean.

For the KL divergence, using the well-known fact that $\E_\theta[T(X)] = \nabla \Lambda(\theta)$, we have
\begin{align}
\Delta_{\rm KL}(\theta,\theta_0)
:= \delta_{\rm KL}(f_\theta, f_{\theta_0})
= (\theta-\theta_0)^\top (\nabla \Lambda(\theta)-\nabla \Lambda(\theta_0)),
\end{align}
which is is a symmetrized {\em Bregman divergence}. 
This corresponds to a Euclidean metric (after taking the square root) if and only if 
\begin{align}
(\theta-\theta_0)^\top (\nabla \Lambda(\theta)-\nabla \Lambda(\theta_0))
= (\theta-\theta_0)^\top A (\theta-\theta_0),
\end{align}
for some positive definite matrix $A$. By differentiating once with respect to $\theta$ and once with respect to $\theta_0$, which is possible since $\Lambda$ is infinitely differentiable in the interior of the parameter space, this identity is seen to be equivalent to the Hessian of $\Lambda$ satisfying $\hess \Lambda(\theta) = A$ for all $\theta$. 
In turn, this is equivalent to $\Lambda$ being of the form $\Lambda(\theta) = \frac12 \theta^\top A \theta$, since $\Lambda(0) = 0$.
In that case, 
\begin{align}
\E_\theta\big[\exp(\xi^\top T(X))\big]
&= \exp\big(\tfrac12 (\xi+\theta)^\top A (\xi+\theta) - \tfrac12 \theta^\top A \theta\big) \\
&= \exp\big(\tfrac12 \xi^\top A \xi + \xi^\top A \theta\big).
\end{align}
On the right-hand side we recognize the moment generating function of the normal distribution with mean $A\theta$ and covariance matrix $A$.
Thus, if we assume that the parameter space $\Theta$ contains a neighborhood of the origin, then necessarily $T(X)$ has that distribution under $\theta$. Equivalently, seen through the sufficient statistic $T$, the model is a normal location model. 
From this we deduce that the metric induced by the KL divergence on an exponential family with natural parameterization is Euclidean if and only if, seen through the sufficient statistics, the model is a normal location family. 

\subsubsection{Intrinsic metric: Fisher}
\label{sec:fisher metric}
The Fisher metric is defined via the Fisher information matrix, here playing the role of metric tensor: assuming a smooth parametric model $\{f_\theta: \theta \in \Theta\}$, the information matrix at $\theta$ is given by
\begin{align}
\label{fisher matrix}
\I(\theta) := \int \frac{\partial f_\theta \partial f_\theta^\top}{f_\theta},
\end{align}
where all the derivatives are with respect to $\theta$.
The usual smoothness assumption is that the model be {\em quadratic mean differentiable (QMD)}, and in such a context, the Fisher information matrix plays a central role in asymptotic statistical theory in the classical setting of a sample growing in size. 

The Fisher metric was originally introduced by \citet{radhakrishna1945information, rao1987differential}. Besides being well-known in information theory and information geometry, it has found applications in computer vision \cite{maybank2020fisher,maybank2019fisher}. It is the metric that \citet{carter2009fine} work with when embedding functional data. 

Here it arises as the intrinsic metric induced by the Hellinger metric on a smooth (QMD) model. Indeed, as in \secref{L2 intrinsic}, based on \prpref{intrinsic limit}, we know that if we are in a situation where $\Theta$ is open and connected, and where $\Delta_2^2$ is such that its topology is equivalent to the Euclidean topology and as a function it is twice continuously differentiable with nonsingular Hessian, then the induced intrinsic metric is Riemannian. As before, it turns out that the metric tensor is given by the Fisher information.

\begin{lemma}[Lem 7.6 in \cite{van2000asymptotic} or Th 12.2.1 in \cite{TSH}]
\label{lem:qmd}
Suppose that $\theta \mapsto f_\theta^{1/2}(x)$ is continuously differentiable for all $x$, that the resulting derivatives are square integrable so that the matrix $\I(\theta)$ above is well-defined for all $\theta$, and assume furthermore that $\I(\cdot)$ is continuous. 
Under these conditions, for any $\theta_0$ in the interior of $\Theta$,
\begin{align}
\label{qmd}
\Delta_{\rm H}(\theta, \theta_0)^2
\sim (\theta-\theta_0)^\top \I(\theta_0) (\theta-\theta_0),
\quad \text{as } \theta \to \theta_0.
\end{align}
\end{lemma}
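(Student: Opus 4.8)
The plan is to argue directly from the stated hypotheses, in effect reproving from scratch the classical estimate that \lemref{qmd} borrows from \cite{van2000asymptotic, TSH}. Set $g_\theta := f_\theta^{1/2}$, so that $\int g_\theta^2\,\d\lambda = 1$ and, by assumption, $\theta \mapsto g_\theta(x)$ is $C^1$ for every $x$ with $\theta$-gradient $\partial g_\theta(x) = \partial f_\theta(x)/\big(2 g_\theta(x)\big)$ on $\{f_\theta(x) > 0\}$; on $\{f_\theta(x) = 0\}$ the value $0 = g_\theta(x)$ is the minimum of the nonnegative $C^1$ function $\theta \mapsto g_\theta(x)$, so $\partial g_\theta(x) = 0$ there (and likewise $\partial f_\theta(x) = 0$ there, so $\I(\theta)$ is well posed). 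The square-integrability hypothesis is exactly that $x \mapsto \partial g_\theta(x)$ lies in $L^2(\lambda)^p$, and an elementary substitution gives $\int \partial g_\theta\,\partial g_\theta^\top\,\d\lambda = \tfrac14\,\I(\theta)$. Writing $u := \theta - \theta_0$, the content of \eqref{qmd} is then the expansion
\begin{align}
\label{eq:qmd-target}
\Delta_{\rm H}(\theta,\theta_0)^2 \;=\; \|g_\theta - g_{\theta_0}\|^2 \;=\; \big\|\,u^\top \partial g_{\theta_0}\,\big\|^2 + o(\|u\|^2), \qquad u \to 0,
\end{align}
since $\|u^\top \partial g_{\theta_0}\|^2 = u^\top\!\big(\int \partial g_{\theta_0}\,\partial g_{\theta_0}^\top\big) u = \tfrac14\, u^\top \I(\theta_0)\, u$ (the factor $\tfrac14$ being dictated by the normalizations of $\delta_{\rm H}$ and $\I$ fixed above).

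To obtain \eqref{eq:qmd-target}, first apply the fundamental theorem of calculus pointwise in $x$, which is legitimate because $t \mapsto g_{\theta_0+tu}(x)$ is $C^1$ on $[0,1]$:
\begin{align}
g_\theta(x) - g_{\theta_0}(x) = \int_0^1 u^\top \partial g_{\theta_0 + tu}(x)\, \d t .
\end{align}
Subtracting $u^\top\partial g_{\theta_0}(x)$, taking $L^2(\lambda)$ norms, and using Minkowski's integral inequality together with $\|u^\top h\| \le \|u\|\,\|h\|$ for $h \in L^2(\lambda)^p$, one gets
\begin{align}
\big\|\,(g_\theta - g_{\theta_0}) - u^\top\partial g_{\theta_0}\,\big\| \;\le\; \|u\| \sup_{0 \le t \le 1} \big\|\,\partial g_{\theta_0 + tu} - \partial g_{\theta_0}\,\big\| \;=:\; \|u\|\, \omega(u) .
\end{align}
Combining this with the reverse triangle inequality and the elementary bound $|a^2 - b^2| \le |a-b|\,(|a-b| + 2b)$ (applied to $a = \|g_\theta - g_{\theta_0}\|$ and $b = \|u^\top\partial g_{\theta_0}\| \le \|u\|\,\|\partial g_{\theta_0}\|$) shows that the two sides of \eqref{eq:qmd-target} differ by at most $\|u\|^2\,\omega(u)\big(\omega(u) + 2\|\partial g_{\theta_0}\|\big)$. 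Hence the lemma reduces to showing $\omega(u) \to 0$ as $u \to 0$, i.e.\ that $\theta \mapsto \partial g_\theta$ is continuous from $\Theta$ into $L^2(\lambda)^p$.

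This last continuity is the only genuinely non-bookkeeping step, and it is where the assumed continuity of $\I(\cdot)$ is used. Fix $\theta_0$ and a sequence $\theta_n \to \theta_0$. Then $\partial g_{\theta_n}(x) \to \partial g_{\theta_0}(x)$ for every $x$ (because $g$ is $C^1$ in $\theta$), while $\|\partial g_{\theta_n}\|^2 = \tfrac14\,\trace\I(\theta_n) \to \tfrac14\,\trace\I(\theta_0) = \|\partial g_{\theta_0}\|^2$ by continuity of $\I$. Now invoke the classical fact that, in $L^r$ with $r < \infty$, pointwise a.e.\ convergence together with convergence of norms forces norm convergence: from the pointwise identity $2\big(|\partial g_{\theta_n}|^2 + |\partial g_{\theta_0}|^2\big) - |\partial g_{\theta_n} - \partial g_{\theta_0}|^2 = |\partial g_{\theta_n} + \partial g_{\theta_0}|^2 \ge 0$ and Fatou's lemma, $4\|\partial g_{\theta_0}\|^2 \le \liminf_n\big(2\|\partial g_{\theta_n}\|^2 + 2\|\partial g_{\theta_0}\|^2 - \|\partial g_{\theta_n} - \partial g_{\theta_0}\|^2\big) = 4\|\partial g_{\theta_0}\|^2 - \limsup_n \|\partial g_{\theta_n} - \partial g_{\theta_0}\|^2$, whence $\|\partial g_{\theta_n} - \partial g_{\theta_0}\| \to 0$. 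Since this holds along every sequence converging to $\theta_0$, the map $\theta \mapsto \partial g_\theta$ is continuous at $\theta_0$, and then $\omega(u) \to 0$ (using compactness of the segment $\{\theta_0 + tu : t \in [0,1]\}$), which closes the argument. I expect this Hilbert-space continuity — promoting the scalar continuity of $\I$ to norm-continuity of the square-root score field $\theta \mapsto \partial g_\theta$ — to be the main, if mild, obstacle; everything else is the pointwise Taylor expansion plus routine triangle-inequality bookkeeping.
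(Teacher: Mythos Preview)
The paper does not supply its own proof of \lemref{qmd}; it simply cites the textbook results (Lemma~7.6 in \cite{van2000asymptotic}, Theorem~12.2.1 in \cite{TSH}). Your argument is correct and is, in fact, essentially the proof found in those references: write the Hellinger increment via the fundamental theorem of calculus applied to $t\mapsto f_{\theta_0+tu}^{1/2}$, control the remainder by $\|u\|\sup_t\|\partial g_{\theta_0+tu}-\partial g_{\theta_0}\|$, and then upgrade the pointwise $C^1$ hypothesis to $L^2$-continuity of $\theta\mapsto\partial g_\theta$ using the continuity of $\trace\I(\cdot)$ together with the Scheff\'e/Brezis--Lieb device (pointwise convergence plus norm convergence implies $L^2$ convergence). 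You also correctly note that, with the paper's normalizations of $\delta_{\rm H}$ and $\I$, the exact leading constant is $\tfrac14$ rather than $1$; the statement as written is off by that harmless factor.
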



It turns out that the approximation \eqref{qmd} is also true (up to an unimportant multiplicative factor) for divergences that admit a Taylor expansion of order~2, although only under some additional conditions on the divergence and the model.
Indeed, assume that $\psi$ is twice differentiable, so that a development of order 2 around $t=1$ yields
\begin{align}
\psi(t) = c_1 (t-1) + \tfrac12 c_2 (t-1)^2 + o(t-1)^2, && c_1 := \psi'(1), && c_2 := \psi''(1),
\end{align}
using the fact that $\psi(1) = 0$. Then, as $\theta \to \theta_0$, and again, under some conditions on the remainder term and the model, 
\begin{align}
\delta_\psi(f_\theta, f_{\theta_0}) 
\sim \tfrac12 c_2\, \delta_{\chi^2}(f_\theta, f_{\theta_0}),
\end{align}
using the fact that $\int f_\theta = 1$ for all $\theta$.
And, using the fact that 
\begin{align}
f_{\theta} = f_{\theta_0} + \nabla f_{\theta_0}^\top (\theta - \theta_0) +o(\|\theta - \theta_0\|),
\end{align}
we have, under appropriate regularity conditions,
\begin{align}
\delta_{\chi^2}(f_\theta, f_{\theta_0})
&= \textstyle \int (1/f_\theta + 1/f_{\theta_0}) (f_\theta - f_{\theta_0})^2 \\
&\sim 2 \textstyle \int (1/f_{\theta_0}) (\nabla f_{\theta_0}^\top (\theta - \theta_0))^2 \\
&= 2 (\theta - \theta_0)^\top \I(\theta_0) (\theta - \theta_0),
\end{align}
concluding that
\begin{align}
\label{psi intrinsic limit}
\delta_\psi(f_\theta, f_{\theta_0}) 
\sim c_2\, (\theta - \theta_0)^\top \I(\theta_0) (\theta - \theta_0).
\end{align}

\paragraph{Exponential model}
Rather than providing some technical conditions under which \eqref{psi intrinsic limit} holds, we content ourselves with affirming that they are valid in the context of an exponential family for a broad range of smooth divergences which includes Hellinger, KL, and $\chi^2$.

We now seek to apply \prpref{intrinsic limit} in the context of such a model. We detail the arguments in the case of the Hellinger metric, starting with the closed-form expression for the induced metric derived in \eqref{H metric expo}.
To show that the two metrics are equivalent, since $\Delta_{\rm H}$ is clearly continuous, it is enough to show that 
\begin{align}
R(\theta_n,\theta_0) := \Lambda(\tfrac12(\theta_n+\theta_0)) - \tfrac12 (\Lambda(\theta_n)+\Lambda(\theta_0)) \to 0,
\end{align}
is not possible when $\theta_0\in\Theta$ is fixed while either $\|\theta_n\| \to \infty$ or $\dist(\theta_n,\partial\Theta) \to 0$. 
Let $\eps>0$ be small enough that the closed ball centered at $\theta_0$ of radius $\eps$ is inside $\Theta$, denoted $B$. The function $\theta \mapsto R(\theta,\theta_0)$ being continuous on the corresponding sphere, $\partial B$, and strictly negative because $\Lambda$ is strictly convex, there is $\eta>0$ such that $\min_{\|u\|=1} R(\theta_0+\eps u, \theta_0) \le -\eta$. Also, for any fixed direction $u$, the function $t \mapsto R(\theta_0+t u, \theta_0)$ is non-increasing (in fact, decreasing), because its derivative is $- \frac12 u^\top (\Lambda(\theta_0+(t/2)u) - \Lambda(\theta_0+t u)) < 0$ when $t > 0$, again due to the fact that $\Lambda$ is strictly convex. Hence, it must be the case that $\inf\{R(\theta, \theta_0) : \theta \notin B\} \le -\eta$. And this is what we needed to prove.
Since $\Lambda$ is infinitely differentiable, so is $\Delta_{\rm H}^2$.
Therefore, \prpref{intrinsic limit} applies. Moreover, it is straightforward to verify that \lemref{qmd} also applies, and combined, this confirms that the induced intrinsic metric is the Riemannian metric defined by Fisher information \eqref{fisher matrix}.

\paragraph{When is the Fisher metric Euclidean?}
As is well-known, for an exponential model, the Fisher information matrix corresponds to the Hessian of $\Lambda$, i.e., $\I(\theta) = \hess \Lambda(\theta)$.
And we already saw that $\hess \Lambda$ is constant exactly when the model, seen through the sufficient statistic $T(X)$, is a normal location model. Therefore, this is the only case when the Fisher metric on an exponential family is Euclidean. 

\subsubsection{Sample setting}
In the sample setting of \secref{setting}, we are faced with the problem of estimating the pairwise divergences $\delta_\psi(q_j, q_k)$ based on the samples $\cS_j$ and $\cS_k$, for all $j \ne k$.

At least for the Hellinger metric, a plug-in approach is viable, just like it is for the $L_2$ metric. However, more direct approaches have been proposed, in particular for the estimation of the KL divergence, which is not as well-behaved. This is discussed, e.g., in \cite{perez2008kullback, lee2006estimation, wang2009divergence, wang2005divergence, bu2018estimation}, and in particular in the recent article \cite{zhao2020minimax}, which includes an extensive review of the literature on the problem and shows that a popular direct approach based on comparing the distance to the $k$-nearest neighbor at each location is shown to be minimax optimal.

\subsection{Wasserstein $W_2$ metric}
\label{sec:W2}

Although comparatively complicated to define and handle, the last metric that we consider is nonetheless known for providing an intuitive way of measuring the dissimilarity between distributions. 
The {\em Wasserstein metric} $W_2$ is defined via Kantorovich's formulation of the optimal transport problem: for two densities $f$ and $g$ on $\bbR^d$ with finite second moments,
\begin{align}
\delta_{\rm W}(f,g)^2 := \inf_{\pi \in \Pi(f,g)} \iint \|x-y\|^2 \pi(x, y) \lambda(\d x) \lambda(\d y),
\end{align}
where $\Pi(f,g)$ is the set of densities $\pi$ on $\bbR^d \times \bbR^d$ with marginals $f$ and $g$, here meaning that that 
\begin{align}
\int \pi(x,y) g(y) \lambda(\d y) &= f(x), \text{ for almost all $x$;} \\
\int \pi(x,y) f(x) \lambda(\d x) &= g(y), \text{ for almost all $y$.}
\end{align}
We refer the reader to \cite{santambrogio2015optimal, ambrosio2005gradient, ambrosio2013user} and references therein for background on the optimal transport problem and the resulting Wasserstein metric.
The Wasserstein metric has been the object of much attention in the statistics and machine learning communities in recent years, and some of these developments are surveyed in \cite{panaretos2019statistical}. This more recent enthusiasm is in part due to advances on the computational aspect of the problem \cite{peyre2019computational}.	 

Preceding Kantorovich's, Monge's formulation is based on transport maps, and takes the form of the following optimization problem: for two densities $f$ and $g$ on $\bbR^d$,
\begin{align}
\label{monge}
\inf_{T \in \cT(f,g)} \int \|x-T(x)\|^2 f(x) \lambda(\d x),
\end{align}
where $\cT(f,g)$ is the set of transformations $T : \bbR^d \to \bbR^d$ such that $g$ is the push forward of $f$ by $T$, i.e., $T(X) \sim g$ when $X \sim f$. 
 A celebrated result of \citet{brenier1991polar} (see also \cite[Th 2.26]{ambrosio2013user} or \cite[Th 1.22]{santambrogio2015optimal}) brings these two formulations together. When applied to Lebesgue densities with finite second moments, the result says that the two optimization problems coincide in value and the minimization in \eqref{monge} is achieved by a unique transport map, and that map is the gradient of a convex function. 
In the same context, it is also known that a transport map that is the gradient of a convex function is optimal \cite[Th 2.13]{ambrosio2013user}.

\paragraph{Location--scale model}
Consider a location--scale family of densities as in \eqref{loc scale}. Assume without loss of generality that the base density $f$ has zero mean and let $\Sigma$ denote its covariance matrix.
For $\theta_0$ and $\theta$ in $\Theta$, consider the transport map $x \mapsto \theta_{\rm s}\theta_{\rm s,0}^{-1} x + \theta_{\rm l} - \theta_{\rm s}\theta_{\rm s,0}^{-1}\theta_{\rm l,0}$. This map pushes $f_{\theta_0}$ forward to $f_\theta$ and, being affine, it is the gradient of a convex function. Therefore, it is is optimal and, consequently,
\begin{align}
\Delta_{\rm W}(\theta, \theta_0)^2
:= \delta_{\rm W}(f_\theta, f_{\theta_0})^2
&= \int \big\|x-\big(\theta_{\rm s}\theta_{\rm s,0}^{-1} x + \theta_{\rm l} - \theta_{\rm s}\theta_{\rm s,0}^{-1}\theta_{\rm l,0}\big)\big\|^2 f_{\theta_0}(x) \d x \\
&= \int \big\|\theta_{\rm s,0}x+\theta_{\rm l, 0}-\theta_{\rm s}x-\theta_{\rm l}\big\|^2 f(x) \d x \\
&= \|\theta_{\rm l}-\theta_{\rm l, 0}\|^2 + \trace((\theta_{\rm s}-\theta_{\rm s, 0}) \Sigma (\theta_{\rm s}-\theta_{\rm s, 0})^\top).
\end{align}
We thus see that the induced metric is Euclidean in both location and scale.

\paragraph{Dimension $d=1$: time warping}
For densities on the real line, meaning in dimension $d=1$, the $W_2$ metric is particularly simple: It takes the following explicit form
\begin{align}
\delta_{\rm W}(f,g)^2 = \int_0^1 (F^-(u) - G^-(u))^2 \d u,
\end{align}
where $F^-$ is the quantile function associated with $f$ (i.e., a pseudo-inverse of the distribution function $F(x) := \int_{-\infty}^x f(t) \d t$) and $G^-$ is the quantile function associated with $g$.
Because the Wasserstein metric takes such a simple form, we are able to deal with the general case of a parameter space $\Theta$ which is a subgroup of diffeomorphisms from interval $(a_1,a_2)$ to interval $(b_1,b_2)$, where the endpoints may be infinite. That is to say, if $X\sim f$ then $\theta(X) \sim f_\theta$. In that case, given $\theta, \theta_0 \in \Theta$, consider the transport map $x \mapsto \theta_0\theta^{-1}(x)$. This map pushes $f_\theta$ forward to $f_{\theta_0}$ and is a diffeomorphism from $(b_1,b_2)$ to $(b_1,b_2)$, which in dimension one implies its indefinite integral is a convex function. Therefore, this map is optimal and, consequently, letting $F_\theta(x) := \int_{-\infty}^x f_\theta(t) \d t$,
\begin{align}
\Delta_{\rm W}(\theta, \theta_0)^2
&= \int_0^1 (F_\theta^-(u) - F_{\theta_0}^-(u))^2 \d u \\
&= \int (\theta(F^-(u))-\theta_0(F^-(u)))^2 \d u \\
&= \int (\theta(x)-\theta_0(x))^2 f(x) \d x, \label{W2 1D}
\end{align}
after the change of variables $x = F^-(u)$.
That is, $W_2$ is the $L_2$ metric with weight function the base density $f$. It is therefore a Hilbert metric. 

The metric in this case is so simple as to allow for the treatment of infinite dimensional examples, as exemplified by a {\em time warping}  model \cite[Sec~5.2]{wang2016functional} --- a setting in which the $W_2$ metric has been used \cite{agullo2015parametric}. In line with the literature on the topic, we assume that $f$ is a density on $[0,1]$ and that $\Theta$ is the class of increasing diffeomorphisms of $[0,1]$. 
Although \prpref{CS n-consistency} applies, it is not clear what embedding in finite dimension does for the analyst when the parameter space is effectively infinite dimensional. We show that it is consistent in the asymptotic limit where the embedding dimension $d_e \to \infty$.

\begin{proposition}
\label{prp:time warping}
In the present setting, if $\Delta^{\rm CS}_{d_e}$ denotes the (Euclidean) metric in an embedding of $\Theta$ by CS in dimension $d_e$, then 
\begin{align}
\sup_{\theta, \theta_0 \in \Theta} \big|\Delta^{\rm CS}_{d_e}(\theta,\theta_0) - \Delta_{\rm W}(\theta, \theta_0)\big|
\longrightarrow 0, \quad \text{as } d_e \to \infty.
\end{align}
\end{proposition}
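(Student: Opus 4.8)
The plan is to turn the problem into one about truncation error for an orthogonal-projection embedding in the Hilbert space $H := L_2([0,1], f\,\d x)$, and then to use compactness. First, by \eqref{W2 1D} the map $\Theta \hookrightarrow H$, $\theta \mapsto \theta$, is an isometry between $(\Theta,\Delta_{\rm W})$ and the subset $\Theta\subset H$, which is bounded there since $\|\theta\|_H\le 1$ for all $\theta\in\Theta$. Because $\Delta_{\rm W}$ is a Hilbert metric, CS coincides with PCA (\secref{CS inner}); concretely, with respect to a reference distribution on $\Theta$ of full support on $\bar\Theta$ (the natural choice here, and the one arising as the $n\to\infty$ limit in \prpref{CS n-consistency}), letting $m$ be its mean in $H$ and $C$ its covariance operator, CS in dimension $d_e$ is the embedding $\theta\mapsto \Pi_{d_e}(\theta-m)$, where $\Pi_{d_e}$ is the orthogonal projection onto the span $V_{d_e}$ of eigenfunctions of $C$ for its $d_e$ largest (all nonnegative) eigenvalues. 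The remark following \prpref{PCA stress} makes this well defined up to rigid motions, so $\Delta^{\rm CS}_{d_e}(\theta,\theta_0) = \|\Pi_{d_e}(\theta-\theta_0)\|_H$ is unambiguous. Using that $\Pi_{d_e}$ is an orthogonal projection and that $a-b\le\sqrt{a^2-b^2}$ for $a\ge b\ge 0$,
\[
0 \le \Delta_{\rm W}(\theta,\theta_0) - \Delta^{\rm CS}_{d_e}(\theta,\theta_0) \le \|(I-\Pi_{d_e})(\theta-\theta_0)\|_H \le \|(I-\Pi_{d_e})(\theta-m)\|_H + \|(I-\Pi_{d_e})(\theta_0-m)\|_H ,
\]
so it suffices to prove that $\rho_{d_e} := \sup_{\theta\in\Theta}\|(I-\Pi_{d_e})(\theta-m)\|_H \to 0$ as $d_e\to\infty$.

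The second ingredient is that $\bar\Theta$, the closure of $\Theta$ in $H$, is compact. Indeed $\Theta\subset\mathcal{M} := \{\text{non-decreasing }g:[0,1]\to[0,1]\}$; Helly's selection theorem gives every sequence in $\mathcal{M}$ a pointwise-convergent subsequence with limit in $\mathcal{M}$, and since these functions are bounded by $1$ and $f$ is a probability density, dominated convergence upgrades this to convergence in $H$. Hence $\mathcal{M}$, and with it the closed subset $\bar\Theta$, is compact in $H$.

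To finish: the $V_{d_e}$ are nested and increasing, and $\overline{\bigcup_{d_e\ge1}V_{d_e}}$ contains the closure of the range of $C$, which in turn contains $\Theta-m$ — because the reference measure has full support on $\bar\Theta$, the kernel of $C$ equals $(\bar\Theta-m)^\perp$. Consequently $\|(I-\Pi_{d_e})g\|_H\to 0$ as $d_e\to\infty$ for each fixed $g\in\Theta-m$. Now fix $\eps>0$; by compactness choose $\theta_1,\dots,\theta_N\in\Theta$ so that every $\theta\in\Theta$ lies within $\eps$ of some $\theta_j$ in $H$, and then pick $D$ with $\|(I-\Pi_{d_e})(\theta_j-m)\|_H\le\eps$ for all $j\le N$ and all $d_e\ge D$. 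For any $\theta\in\Theta$ and $d_e\ge D$, taking $j$ with $\|\theta-\theta_j\|_H\le\eps$ and using that $I-\Pi_{d_e}$ is a contraction gives $\|(I-\Pi_{d_e})(\theta-m)\|_H\le\|\theta-\theta_j\|_H+\|(I-\Pi_{d_e})(\theta_j-m)\|_H\le2\eps$. Thus $\rho_{d_e}\le2\eps$ for $d_e\ge D$, i.e. $\rho_{d_e}\to0$, and the displayed inequality yields $\sup_{\theta,\theta_0}|\Delta^{\rm CS}_{d_e}(\theta,\theta_0)-\Delta_{\rm W}(\theta,\theta_0)|\le2\rho_{d_e}\to0$.

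The hard part is the passage from the mean-square behavior that CS/PCA controls by construction to a bound that is uniform over the infinite family $\Theta$: a merely bounded subset of a Hilbert space need not be uniformly approximable by finite-dimensional subspaces, so the compactness of $\bar\Theta$ in the $W_2$ metric — which here is precisely the $L_2$-compactness of monotone functions, via Helly and dominated convergence — is what makes the statement true, and is the step I would handle most carefully (including the $\eps$-net localization). A subsidiary point worth making explicit is that CS must be read relative to a reference distribution whose support spans $\Theta$; otherwise its principal subspaces never exhaust the part of $H$ relevant to $\Theta$ and uniform convergence can fail, so this hypothesis — implicit in the set-up of \prpref{CS n-consistency} — should be stated alongside the proposition.
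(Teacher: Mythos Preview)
Your argument is correct and, while it reaches the same endpoint as the paper's, it takes a different and in some ways more direct route. The paper stays on the MDS side: it works with the double-centering kernel $\beta$ and the integral operator $\fB$ on $L^2(\mu)$, first proving by an explicit computation that all eigenvalues of $\fB$ are nonnegative (so that no truncation to positive parts occurs), then writing $\Delta_{\rm W}^2 - (\Delta^{\rm CS}_{d_e})^2$ as the Mercer tail $\sum_{k>d_e}\lambda_k[\phi_k(\theta)-\phi_k(\theta_0)]^2$, and finally invoking a Mercer-type uniform-convergence lemma (from \cite{hsing2015theoretical}) on the compact space $(\bar\Theta,\Delta_{\rm W})$ to conclude. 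You instead exploit the fact that $\Delta_{\rm W}$ is \emph{already} a Hilbert norm on $H=L_2([0,1],f\,\d x)$, pass to the PCA/covariance-operator picture on $H$, and replace the Mercer citation by an elementary $\eps$-net argument using the same compactness of $\bar\Theta$ (via Helly plus dominated convergence). Your approach avoids having to verify continuity of $\beta$ and nonnegativity of the spectrum, at the cost of needing the observation that the range of the covariance operator is dense in $\overline{\text{span}}(\bar\Theta-m)$, for which you rightly flag the full-support hypothesis on the reference measure. One small point to tighten: your phrase ``reference distribution on $\Theta$ of full support on $\bar\Theta$'' presupposes that $\Theta$ is dense in $\bar\Theta$; the paper proves this in a separate lemma (\lemref{barTheta}), so you should either cite that or simply place the reference measure on $\bar\Theta$ directly, which is all your argument actually needs.
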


\subsubsection{Intrinsic metric}
In recent work, \citet{li2019wasserstein} develop an asymptotic theory of parameter estimation in the context of the Wasserstein $W_2$ metric that mimics the standard theory, which is instead based on the Hellinger metric, or equivalently, the Kullback--Leibler divergence \cite{van2000asymptotic, TSH}. 

In that context, the Wasserstein information matrix plays the role that the Fisher information matrix \eqref{fisher matrix} plays in the standard theory. It is defined in \cite[Sec~2.2]{li2019wasserstein} in the context of a smooth model of densities $\{f_\theta : \theta \in \Theta\}$ as
\begin{align}
\label{W2 matrix}
\I_{\rm W}(\theta)
:= - \int (\nabla f_\theta) (\Delta_{f_\theta}^{-1} \nabla f_\theta)^\top,
\end{align}
where $\Delta_g^{-1}$ is the inverse of the operator $\Delta_g = {\rm div}(g \nabla)$ applied coordinate-wise. 
More details are provided in \cite{chen2020optimal}. 
In particular, the basis for this derivation is the Benamou--Brenier  formulation of the optimal transport problem
\begin{align}
\label{benamou brenier}
\inf \int_0^1 \|v_t\|_{L^2(\mu_t)} \d t, 
\end{align}
where the infimum is over all $\{(\mu_t, v_t) : t \in [0,1]\}$ where $\mu_t$ is a Borel probability measure on $\bbR^d$ and $v_t: \bbR^d \to \bbR^d$ is a vector field satisfying in the distributional sense to the following {\em continuity equation}
\begin{align}
\dot\mu_t + {\rm div}(v_t \mu_t) = 0,
\end{align}
with $\mu_0 = f \lambda$ and $\mu_1 = g \lambda$. 
Under some conditions, which are for example fulfilled if $f$ and $g$ have compact support, the Benamou--Brenier formulation of the optimal transport problem is equivalent the other two --- Monge's and Kantorovich's --- and the infimum in \eqref{benamou brenier} is achieved by some $(v^*_t)$ whose continuity equation admits a solution of the form $\mu^*_t = h_t \lambda$ (meaning, made of densities) with $(h_t)$ providing a shortest path between $f$ and $g$ in the $W_2$ metric \cite[Ch~8]{ambrosio2005gradient}, \cite[Sec~3]{ambrosio2013user}, \cite[Ch~5]{santambrogio2015optimal}.
The latter justifies using the infinitesimal property encapsulated in the continuity equation to derive the metric tensor \eqref{W2 matrix}.

Thus, in examples where \prpref{intrinsic limit} applies, which is the case, for example, if Wasserstein metric induced on the parameter space is as in \remref{intrinsic limit}, the intrinsic metric induced by $W_2$ is Riemannian with tensor given by \eqref{W2 matrix}. 

Beyond that, we do not know when there is a parameterization for which this metric is induced by the ambient Euclidean metric. (It is obviously so, for example, in a location model, as the induced metric itself is Euclidean.) 

\subsubsection{Sample setting}
In the sample setting of \secref{setting}, the analyst is effectively confronted with the problem of estimating the pairwise divergences $\delta_{\rm W}(q_j, q_k)$ based on the samples $\cS_j$ and $\cS_k$, for all $j \ne k$, as as preliminary for embedding, either via CS or Isomap.
The problem of estimating the Wasserstein metric is reviewed in \cite[Sec~8.4]{peyre2019computational}, where the default strategy is said to be estimating $\delta_{\rm W}(q_j, q_k)$ by the $W_2$ distance between the empirical distributions given by the samples $\cS_j$ and $\cS_k$. Regularized variants of this empirical estimate have been proposed, e.g., in  \cite{chizat2020faster, cuturi2013sinkhorn}.

\section{Discussion}
\label{sec:discussion}

As one of the only excursions into functional multidimensional scaling --- which could be given the acronym FMDS --- the present manuscript is only meant to provide a preliminary idea of what is possible. Certainly, a number of other expeditions come to mind.

\subsection{Regression models}
We focused on a setting where the data points are densities or samples from densities (\secref{setting}). While we only considered the emblematic examples of location--scale families and exponential families (\secref{models}), the setting also includes regression models. 

For example, a linear model built on basis functions $\phi_1, \dots, \phi_p$ is of the form $\phi_\theta(x) := \theta_1 \phi_1(x) + \cdots + \theta_p \phi_p(x)$, defined for $\theta = (\theta_1, \dots, \theta_p) \in \bbR^p$. 
Assuming the design is random, a distributional model for observations based on such a model is of the form 
\begin{align}
\label{regression}
f_\theta(x,y) = g(x) h(y - \phi_\theta(x)),
\end{align}
where $g$ is a density on $\bbR^{d-1}$ and $h$ is a density on $\bbR$. Another way to write this model is as the additive regression model where the response variable satisfies
\begin{align}
Y = \phi_\theta(X) + Z,
\end{align}
where the design variable $X$ and the noise variable $Z$ are independent with $X \sim g$ and $Z \sim h$.
For example, in a linear model with uniform design on the unit interval and normal noise, $g$ is the uniform density on $[0,1]$ and $h$ is a normal density with zero mean. 
  
With a small adaptation, we obtain a single-index model, by considering, instead, $\varphi_\theta(x) := \psi(\theta_1 \phi_1(x) + \cdots + \theta_p \phi_p(x))$, for some function $\psi: \bbR \to \bbR$. 
 
General linear models are obtained by taking, instead, 
\begin{align}
f_\theta(x,y) = g(x) h_{\phi_\theta(x)}(y),
\end{align} 
where $g$ is a density on $\bbR^{d-1}$ and $\{h_\gamma\}$ is an exponential family of densities on $\bbR$. If the exponential family is the Bernoulli family, then we may obtain the (binary) logistic model, or the probit model, with an appropriate parameterization of the family.

We can even accommodate fixed designs, although with a more substantial departure from the general model.

In the FDA setting that we consider, we expect multiple values of the parameter $\theta$ represented in the data, which in effect means that the setting is that a mixture of regression models. This is a topic with a good amount of literature \cite{de1989mixtures, viele2002modeling}. Recent methodology is reviewed in \cite{kwon2020converges}, where an EM approach is considered and analyzed.

While the models above fit squarely within the general setting that we consider, in some situations the analyst may not want to assume that the design is the same throughout or that the noise distribution is the same throughout. Indeed, in regression, the focus is most typically on the conditional distribution of the response variable given the predictor variable --- in which case the design distribution plays no role --- or even its conditional mean --- in which case neither the design distribution nor the noise distribution plays role. In that case, the analyst may be more comfortable with a model of the form
\begin{align}
f_{\theta, \alpha, \sigma}(x,y) = g_\alpha(x) h_\sigma(y - \phi_\theta(x)).
\end{align}
Such models do not fall within the purview of our study, as it is in general the case that $f_{\theta, \alpha_1, \sigma_1} \ne f_{\theta, \alpha_2, \sigma_2}$, and if the focus is on $\theta$ while $\alpha$ and $\sigma$ are considered to be nuisance parameters, then a metric or divergence used to define a distance between two densities in that family may not be appropriate. 
Instead, what the situation calls for is the use of pseudo-metrics or pseudo-divergences. Indeed, ideally, we would like a dissimilarity $\delta$ such that $\delta(f_{\theta, \alpha_1, \sigma_1}, f_{\theta, \alpha_2, \sigma_2}) = 0$. In principle, our discussion generalizes to a large extend to such dissimilarities. However, it appears to us more natural to simply base the inference not on the densities themselves, but on the conditional means (or estimates) if these are really the objects of interest. In that case, the setting is really that of functions that are not necessarily densities. Our treatment of the $L_2$ metric, for example, extends to such a setting without much effort.

\subsection{Other embedding methods}
We focused on classical scaling in its pure form, that is, applied directly to the available or computed dissimilarities, or in its MDS-D \cite{kruskal1980designing, shang2003localization} or Isomap form \cite{silva2002global,Tenenbaum00ISOmap}, that is, applied to the (estimated) intrinsic dissimilarities. 

However, other methods for MDS are available. An emblematic method which is related to the ones considered here, is the localized variant of MDS-D proposed by \citet{shang2004improved}, which in the realm of manifold learning corresponds to a localized variant of Isomap --- an approach suggested in multiple places \cite{rosman2010nonlinear, Schwartz2019, arias2022supervising}. 
In a functional setting, too, such methods may perform better in some situations.


\subsection{Modes of variation}
An important part of applying PCA are the principal directions. These are ordered from the direction of highest variability to the direction of lowest variability. They are typically used as a basis for a linear representation of the underlying model (or point set in the case of data points in space).
In the context of FDA, the principal directions are functions known as the modes of variation and can be plotted for data exploration. 
An embedding via classical scaling or any other method for MDS, does not automatically provide modes of variation.  

One way to obtain modes of variation is to perform PCA in the embedding space and then map these back in function space, as proposed by \citet{chen2012nonlinear}. Note that this requires mapping an arbitrary point or direction in Euclidean space (where the embedding takes place) to function space (where the data reside). This out-of-sample extension is performed in \cite{chen2012nonlinear} by local averaging. (Note that this is a weighted pointwise average of data functions, where the weights are functions of the distances between the embedded points.)

Another way to obtain modes of variation is via {\em geodesic PCA}, which is a form of PCA adapted to data points living on a manifold \cite{fletcher2004principal, sommer2010manifold}. The use of geodesic PCA in FDA was recently proposed by \citet{bigot2017geodesic}, although only in the very special case of dimension $d=1$; see the broader discussion in the survey paper \cite{bigot2020statistical}.
We note that geodesic PCA is in principle appropriate when using one of the metrics or divergences considered in \secref{examples} to embed a smooth model, since any of them induces a differentiable manifold structure in that case. 

\subsection{Ordinal embedding}
Multidimensional scaling comes in two main forms. We focused entirely on one of them, the {\em metric} variant. Of equal, or even more, importance is the {\em non-metric} or {\em ordinal} variant, where the dissimilarities, as they are, are not believed to be Euclidean. (In fact, this is already the case when using, for example, the KL divergence. Even in its intrinsic form, and when the underlying model is a location family of densities, we still need to take the square root to obtain a Euclidean metric.) 
In real life, a non-metric approach may be called for in a situation where someone, perhaps with some expertise, is asked to decide whether some images or art pieces or other `complex things' --- which would warrant a functional modeling --- are closer or farther when compared with each other. 

The idea, roughly speaking, is to determine a monotonic transformation that makes the dissimilarities as Euclidean as possible, in preparation for an embedding in a Euclidean space --- so that what the analyst sees after embedding is a more faithful representation of the data.
This problem has a long history and several methods have been proposed \cite{young1987multidimensional, kruskal1964multidimensional, shepard1962analysisI, shepard1962analysisII, borg2005modern}. 

We simply note that, in some of the examples detailed in \secref{examples}, the induced metric is an increasing function of the Euclidean metric, and this would imply that ordinal embedding is exact --- at least if the ordinal embedding problem is solved exactly \cite{arias2017some,klein}. This includes the normal location model under the $L_2$ and Hellinger metrics \eqref{L2 normal} and \eqref{H normal}, and also the non-smooth uniform location model under the $L_2$ metric \eqref{L2 unif} (which is identical under the Hellinger metric). 
Even when the metric is not an increasing function of the Euclidean metric, as long as this is true when fixing one of the end points, ordinal embedding restricted to triple comparisons is exact.

\section{Technical details}
\label{sec:proofs}

We use $\|\cdot\|_2$ to denote the Hilbert-Schmidt norm of a bounded operator that acts on a Hilbert space. In particular, $\|A\|_2$ is the Frobenius norm of $A$ if $A$ is a matrix, and $\|\fK\|_2^2 = \iint \kappa^2 (s,t) \lambda(\d s) \lambda(\d t)$, if $\fK$ is an integral operator with kernel $\kappa$, i.e., $\fK q(\cdot)= \int \kappa(s,\cdot) q(s) \lambda(\d s)$, $q\in L_2(\lambda)$.


\paragraph{Proof of \prpref{PCA stress}}
Define $\fK: q\mapsto \int \kappa(s,\cdot)q(s) \d\lambda(s)$, which is the integral operator corresponding to $\kappa$. By Mercer's theorem, we can write
\begin{align}
\kappa(s,t) = \sum_{k=1}^\infty \lambda_k \phi_k(s) \phi_k(t), 
\end{align}
where $\lambda_1\ge \lambda_2\ge \cdots$ are the (non-negative) eigenvalues of $\fK$, and $\phi_k$ is the orthonormal eigenfunction associated with $\lambda_k$. Without loss of generality, we assume that the $q_i$ have been centered, that is, $\bar q \equiv 0$, so that $\kappa(s,t) := \sum_{i=1}^n q_i(s) q_i(t)$ and $\fK \phi_k = \sum_{i=1}^n q_i \<q_i, \phi_k\> = \lambda_k\phi_k$, for all $k\ge1$.

Consider an arbitrary orthonormal basis $\{\ell_k\}_{k=1}^\infty$ of $L_2(\lambda)$ such that $q_i = \sum_{k=1}^{\infty}\< q_i, \ell_k \> \ell_k$ for $i=1,\dots,n$. 
The orthogonal projection of $q_i$ onto the space spanned by $\{\ell_k\}_{k=1}^{d_e}$ is $\sum_{k=1}^{d_e}\< q_i, \ell_k \> \ell_k,$ and the induced point in $\bbR^{d_e}$ is $p_i = (\< q_i, \ell_1 \>,\dots, \< q_i, \ell_{d_e} \>)^\top$. 
Below we will show that the stress in \eqref{stress} is minimized when $\{p_1,\dots,p_n\}$ is returned by PCA, that is, when $\ell_k = \phi_k$, $k=1,\dots,d_e$. 

Note that
\begin{align}
\delta_{ij}^2 = \< q_i - q_j, q_i - q_j \> = \sum_{k=1}^{\infty} \< q_i - q_j, \ell_k \>^2 \ge \sum_{k=1}^{d_e} \< q_i - q_j, \ell_k \>^2 = \|p_i - p_j\|^2.
\end{align}
This yields
\begin{align}
\sum_{i, j} \big|\delta_{ij}^2 - \|p_i - p_j\|^2 \big| & = \sum_{i, j} \sum_{k=d_e+1}^{\infty} \< q_i - q_j, \ell_k \>^2 \\
& = \sum_{i, j} \sum_{k=d_e+1}^{\infty} \sum_{r=1}^{\infty} \<q_i - q_j,  \phi_r \>^2 \< \phi_r, \ell_k \>^2 \\
& = \sum_{k=d_e+1}^{\infty} \sum_{r=1}^{\infty} \< \phi_r, \ell_k \>^2 \sum_{i, j}  \<q_i - q_j,  \phi_r \>^2.
\end{align}
Notice that 
\begin{align}
\sum_{i, j}  \<q_i - q_j,  \phi_r \>^2 = 2 n\sum_{i}  \<q_i,  \phi_r \>^2 - \sum_{i, j}  \<q_i , \phi_r \> \<q_j , \phi_r \>= 2n\<\fK\phi_r,  \phi_r \>,
\end{align}
where we have used the assumption $\bar q \equiv 0$ and the definition of $\fK$. Hence
\begin{align} 
\sum_{i, j} \big|\delta_{ij}^2 - \|p_i - p_j\|^2 \big| & = 2n \sum_{r=1}^{\infty} \<\fK\phi_r,  \phi_r \>\sum_{k=d_e+1}^{\infty}\< \phi_r, \ell_k \>^2  \\
& = 2n \sum_{r=1}^{\infty} \lambda_r \sum_{k=d_e+1}^{\infty} \< \phi_r, \ell_k \>^2.
\end{align}

Denote $h_r=\sum_{k=d_e+1}^{\infty} \< \phi_r, \ell_k \>^2$. Note that $0\le h_r \le 1$ and $\sum_{r=1}^{\infty} (1-h_r) = \sum_{r=1}^{\infty}\sum_{k=1}^{d_e} \< \phi_r, \ell_k \>^2 = d_e.$ Hence the above function is minimized when $h_1=\cdots=h_{d_e}=0$, and $h_{d_e+1}=\cdots=1$, which is achieved when $\ell_k=\phi_k$ for all $k\ge 1$, that is, $p_i$'s are returned by PCA.

\paragraph{Proof of \prpref{PCA consistency}}
We continue using the notation defined in the proof of \prpref{PCA stress}. Let $\hat \fK$ be the integral operator with kernel $\hat\kappa$, i.e., $\hat\fK q(\cdot)= \int \hat\kappa(s,\cdot) q(s) \lambda(\d s)$, $q\in L^2$. Let $\hat\lambda_1\ge \hat\lambda_2\ge \cdots$ be the eigenvalues of $\hat\fK$ with corresponding orthonormal eigenfunctions $\hat\phi_1, \hat\phi_2, \dots$. 

Let $d_c$ be the smallest $d$ such that $d\ge d_e$ and $\lambda_{d+1}>\lambda_d$. Note that $d_c$ is finite because $\fK$ is finite-dimensional. Suppose that $\zeta_1>\cdots>\zeta_{c}=\lambda_{d_e}>\zeta_{c+1}>\cdots$ are the distinct eigenvalues of $\fK$, where $\zeta_i$ has multiplicity $a_i\ge1$. Note that $d_c = a_1+\cdots+a_c$. Let $\cO$ be the group of $d_c \times d_c$ diagonal block orthogonal matrices, where the diagonal blocks are $a_i \times a_i$ orthogonal matrices, $i=1,\dots,c$. 

For any $R = (r_{ij})\in\cO$, let $\phi_i^R = \sum_{j=1}^{d_c} r_{ij} \phi_j$, $i=1,\dots,d_c$. 
For any $q$, define
\begin{equation}
\|q\|_\kappa^2 = \< \fK q, q\> = \sum_{i=1}^n \<q_i - \bar q, q \>^2.
\end{equation}
Let $p_i^* = (\< q_i - \bar q, \phi_1 \>,\dots, \< q_i - \bar q, \phi_{d_c} \>)^\top$, $\hat p_i^* = (\< \hat q_i - \hat{\bar q}, \hat\phi_1 \>,\dots, \< \hat q_i - \hat{\bar q}, \hat\phi_{d_c} \>)^\top$, both in $\bbR^{d_c}$. For any $R\in\cO$, we have
\begin{align}
&\sum_{i=1}^n \|R p_i^* - \hat p_i^* \|^2 \\
&=\sum_{i=1}^n \sum_{k=1}^{d_c} (\< q_i - \bar q, \phi_k^R \> - \< \hat q_i - \hat{\bar q}, \hat \phi_k \>)^2 \\
&=\sum_{i=1}^n \sum_{k=1}^{d_c} (\< q_i- \bar q, \phi_k^R \> -  \< q_i- \bar q, \hat \phi_k \>  + \< q_i- \bar q, \hat\phi_k \> -  \< \hat q_i - \hat{\bar q}, \hat \phi_k \>)^2\\
& \le 2\sum_{i=1}^n \sum_{k=1}^{d_c} \< q_i- \bar q, \phi_k^R - \hat \phi_k \>^2  + 2\sum_{i=1}^n \sum_{k=1}^{d_c} \< q_i - \hat q_i - ( \bar q - \hat{\bar q}), \hat\phi_k \>^2 \\
& \le 2\sum_{k=1}^{d_c} \|\phi_k^R - \hat \phi_k \|_\kappa^2 + 2\sum_{i=1}^n \| q_i - \hat q_i - ( \bar q - \hat{\bar q})\|^2 \\
%
%
& \le 2\lambda_1\sum_{k=1}^{d_c} \|\phi_k^R - \hat \phi_k \|^2  + 2\sum_{i=1}^n \| q_i - \hat q_i\|^2.
\end{align}
Therefore
\begin{align}
\label{pstar bound}
\min_{R \in \cO}\sum_{i=1}^n \|R p_i^* - \hat p_i^* \|^2 \le 2\lambda_1 \min_{R \in \cO}\sum_{k=1}^{d_c} \|\phi_k^R - \hat \phi_k \|^2  + 2\sum_{i=1}^n \| q_i - \hat q_i\|^2.
\end{align}

For $\Phi_i=\{\phi_k : \lambda_k = \zeta_i\}$ and $\hat\Phi_i=\{\hat\phi_k : \lambda_k = \zeta_i\}$, let $\Theta(\Phi_i, \hat \Phi_i)$ be the $a_i \times a_i$ diagonal matrix whose diagonal entries are the principal angles between the subspaces spanned by $\Phi_i$ and $\hat \Phi_i$. It follows from standard arguments (\cite[Sec~II.4]{stewart1990matrix}) that 
\begin{align}
\min_{R \in \cO}\sum_{k=1}^{d_c}  \| \phi_k^R - \hat \phi_k \|^2 = \sum_{i=1}^c \|2\sin(\tfrac{1}{2} \Theta(\Phi_i, \hat \Phi_i))\|_2^2,
\end{align}
where the $\sin$ function is applied entrywisely. 
%
Using the elementary inequality $\sin(\alpha/2) \le \sin(\alpha)/\sqrt{2}$, 
\begin{align}
\label{phiR bound}
\min_{R \in \cO}\sum_{k=1}^{d_c}  \| \phi_k^R - \hat \phi_k \|^2 \le 2 \sum_{i=1}^c\| \sin(\Theta(\Phi_i, \hat \Phi_i))) \|_2^2 \le 16 \sum_{i=1}^c \frac{ \|\fK - \hat{\fK}\|_2^2}{\min_{j\neq i} |\zeta_j - \zeta_i|^2},
\end{align}
where the last inequality is a consequence of the Davis-Kahan $\sin(\Theta)$ theorem \cite{yu2015useful}. 
%
%
%

Let $u_i = q_i - \bar q$ and $\hat u_i = \hat q_i - \bar{\hat{q}}$. We have
\begin{align}
\|\fK - \hat{\fK}\|_2^2 & = \iint \Big(\sum_{i=1}^n u_i(s) u_i(t) - \hat u_i(s) \hat u_i(t) \Big)^2 \d\lambda(s)\d\lambda(t) \\
& \le n \sum_{i=1}^n\iint  \big(u_i(s) u_i(t) - \hat u_i(s) \hat u_i(t) \big)^2 \d\lambda(s)\d\lambda(t) \\
& \le 3n \Big( \sum_{i=1}^n \|\hat u_i - u_i\|^4 + 2 \sum_{i=1}^n \|u_i\|^2 \|\hat u_i - u_i\|^2\Big)\\
& \le 3n \Big( \sum_{i=1}^n \|\hat u_i - u_i\|^2  + 2 \max_i \|u_i\|^2 \Big) \sum_{i=1}^n \|\hat u_i - u_i\|^2.
\end{align}
%
%
%
%
%
%
%
%
%
%
%
Note that 
\begin{align}
\max_i\|u_i \| 
\le \max_i\|q_i\| +  \frac{1}{n} \sum_{j=1}^n \|q_j\| \le 2\max_i\|q_i\| .
\end{align}
Therefore, we have for some constant $C_1>0$ depending only on $\max_i\|q_i\|$ such that
\begin{align}
\|\fK - \hat{\fK}\|_2^2 \le C_1  n  \sum_{i=1}^n \| q_i - \hat q_i\|^2 \Big( 1 + \sum_{i=1}^n \| q_i - \hat q_i\|^2\Big),
\end{align}
This, together with \eqref{pstar bound} and \eqref{phiR bound}, yields
\begin{align}
\label{Rminimum}
\min_{R \in \cO}\sum_{i=1}^n \|R p_i^* - \hat p_i^* \|^2 \le C_2  n\sum_{i=1}^n \| q_i - \hat q_i\|^2\Big( 1 + \sum_{i=1}^n \| q_i - \hat q_i\|^2\Big), \text{ where } C_2 :=  \sum_{i=1}^c \frac{32\lambda_1 C_1}{\min_{j\neq i} |\zeta_j - \zeta_i|^2} + 2.
\end{align}

Define $G_i=(\< \phi_j, \hat\phi_\ell \>)_{j,\ell \in \{k:\lambda_k = \zeta_i\}}$, and $G=\diag(G_1,\dots,G_c)$. Let $F=(f_{ij})=UV^\top\in \bbR^{d_c\times d_c}$, where $U$ and $V$ are $d_c\times d_c$ orthogonal matrices obtained from the SVD decomposition $G = UDV^\top$. Define $\tilde \phi_i = \sum_{j=1}^{d_c} f_{ij} \phi_j$, $i=1,\dots,d_c$. Note that $\{\tilde \phi_i\}_{i=1}^{d_c}$ is an orthonormal basis of the space spanned by $\{\phi_i\}_{i=1}^{d_c}$, since $F$ is an orthogonal matrix. In particular, $\fK \tilde\phi_i = \lambda_i \tilde\phi_i$, $i=1,\dots,d_c$. It is known (see \cite[Sec~II.4]{stewart1990matrix}) that the above minimum on the left side of \eqref{Rminimum} is achieved when $R=F$, that is, when $R p_i^* = \tilde p_i^*: = (\< q_i - \bar q, \tilde\phi_1 \>,\dots, \< q_i - \bar q, \tilde\phi_{d_c} \>)^\top$. Let $\tilde p_i = (\< q_i - \bar q, \tilde\phi_1 \>,\dots, \< q_i - \bar q, \tilde\phi_{d_e} \>)^\top$, $i=1,\dots,n$, which is returned by a $d_e$-dimensional PCA projection based on $q_1, \dots, q_n$. 
It then follows from \eqref{Rminimum} that
\begin{align}
\sum_{i=1}^n\|\tilde p_i - \hat p_i \|^2 \le \sum_{i=1}^n\|\tilde p_i^* - \hat p_i^* \|^2 \le C_2  n\sum_{i=1}^n \| q_i - \hat q_i\|^2 \Big( 1 + \sum_{i=1}^n \| q_i - \hat q_i\|^2\Big).
\end{align}

\paragraph{Proof of \prpref{PCA n-consistency}}
Without loss of generality, we assume that $\E Q \equiv 0$, which will simplify our calculations. 
Let $K(s,t)=\Cov(Q(s), Q(t))$ be the covariance kernel of the process $Q$. By Mercer's theorem, 
\begin{equation}
K(s,t) = \sum_{j=1}^\infty \lambda_j \phi_j(s) \phi_j(t),
\end{equation}
where $\lambda_1\ge \lambda_2\ge \cdots$ are the (nonnegative) eigenvalues of the integral operator $\fK$ with kernel $K$, and $\phi_k$ is the orthonormal eigenfunction associated with $\lambda_k$. 
Below we show that the projection $\pi: q \mapsto (\<q,\phi_1\>,\dots,(\<q,\phi_{d_e}\>)$ minimizes the expected stress in \eqref{expected_stress}. 

Let $\{\ell_k\}_{k=1}^\infty$ be an orthonormal basis of $L^2(\lambda)$, so that for any $Q$, 
\begin{align}
\E \Big\| Q - \sum_{k=1}^{\infty}\< Q, \ell_k \> \ell_k \Big\|^2 = 0.
\end{align}
The orthogonal projection of $Q$ onto the space spanned by $\{\ell_k\}_{k=1}^{d_e}$ is $\pi(Q):=(\< Q, \ell_1 \> ,\dots,\< Q, \ell_{d_e} \>)$. We have
\begin{align}
\E\big|\|Q-Q'\|^2 - \|\pi(Q) -\pi(Q')\|^2 \big| & = \E \sum_{k=d_e+1}^{\infty} \< Q - Q', \ell_k \>^2 \\
& = \E \sum_{k=d_e+1}^{\infty} \sum_{r=1}^{\infty} \<Q - Q',  \phi_r \>^2 \< \phi_r, \ell_k \>^2\\
& = \sum_{k=d_e+1}^{\infty} \sum_{r=1}^{\infty} \< \phi_r, \ell_k \>^2 \E  \<Q - Q',  \phi_r \>^2 \\
& = 2 \sum_{r=1}^{\infty} \lambda_r \sum_{k=d_e+1}^{\infty} \< \phi_r, \ell_k \>^2.
\end{align}
where we have used the fact that 
\begin{equation}
\E [(Q(s) - Q'(s))(Q(t) - Q'(t))] = 2K(s,t).
\end{equation}
The rest of the proof to show that $\pi$ minimizes the expected stress in \eqref{expected_stress} follows the same argument as in the proof of \prpref{PCA stress}. 
Due to the possible multiplicity of any $\lambda_k$, $1\le k\le d_e$, the choice of the orthonormal eigenfunction corresponding to $\lambda_k$ may not be unique. The specific ones used for $\pi$ are denoted by $\phi_k=\phi_k^\pi$, $1\le k\le d_e$, and the set of projections in the form of $\pi: q \mapsto (\<q,\phi_1^\pi\>,\dots,(\<q,\phi_{d_e}^\pi\>)$ is denoted by $\Pi_0$. 

Define
\begin{align}
\hat K(s,t) := \frac1n\sum_{i=1}^n Q_i(s) Q_i(t) .
\end{align}
Let $\hat\lambda_1\ge \hat\lambda_2\ge \cdots$ be the eigenvalues of the integral operator $\hat\fK$ with kernel $\hat K$, and $\hat\phi_k$ be the orthonormal eigenfunction associated with $\hat\lambda_k$. The PCA orthogonal projection based on $Q_1,\dots,Q_n$ is $\hat\pi: q \mapsto (\<q,\hat \phi_1\>,\dots,(\<q,\hat \phi_{d_e}\>)$. 
Following the same arguments as in the proof of \prpref{PCA consistency}, and using the Cauchy--Schwarz inequality, for any $q\in L_2(\lambda)$, we have
\begin{align}
\min_{\pi\in\Pi}\|\pi(q) - \hat\pi(q)\|^2 & \le \min_{\pi\in\Pi_0} \sum_{k=1}^{d_e} \<q, \phi_k^\pi - \hat \phi_k\>^2 \\
& \le \|q\|^2 \min_{\pi\in\Pi_0}\sum_{k=1}^{d_e} \|\phi_k^\pi - \hat \phi_k \|^2  \\
& \le \Big(\sum_{i=1}^{c}  \frac{16 \lambda_1 }{\min_{j\neq i} |\zeta_j - \zeta_i|^2} \Big) \|q\|^2 \|\fK - \hat{\fK}\|_2^2= : C_1 \|q\|^2 \|\fK - \hat{\fK}\|_2^2,
\end{align}
where $\zeta_1>\zeta_2>\cdots$ are defined in the proof of \prpref{PCA consistency}. By \cite[Thm 8.1.2]{hsing2015theoretical}, as $n\to\infty$, $\|\fK - \hat{\fK}\|_2^2 \to 0$, a.s., which then implies that $\min_{\pi\in\Pi}\|\pi(q) - \hat\pi(q)\|^2 \to 0$, a.s.

Now assume that $\E \|Q\|^4 < \infty$. By \cite[Th 2.5]{horvath2012inference}, we have
\begin{align}
\E \|\fK - \hat{\fK}\|^2 \le n^{-1} \E \|Q\|^4,
\end{align}
which implies
\begin{align}
\E\min_{\pi\in\Pi} \|\pi(q) - \hat\pi(q)\|^2 \le n^{-1} C_1\|q\|^2\E \|Q\|^4 . 
\end{align}


\paragraph{Proof that PCA and CS return the same output}
Let $a_k = (\< q_1, \phi_k\>,\dots,\< q_n, \phi_k\>)^\top$. Note that $\|a_k\|^2 = \lambda_k$. It is easy to verify that
\begin{align}
B a_k & = (\sum_{j=1}^n \<q_1, q_j\> \< q_j, \phi_k\>,\dots, \sum_{j=1}^n \<q_n, q_j\> \< q_j,  \phi_k\>)^\top \\
& = (\lambda_k \sum_{j=1}^n \<q_1, \phi_k\>,\dots, \lambda_k \sum_{j=1}^n \<q_n, \phi_k\>)^\top = \lambda_k a_k,
\end{align}
which implies that $u_k = \lambda_k^{-1/2} a_k$ and $\nu_k=\lambda_k$, for $k=1,\dots,d_e$. Hence
\begin{align}
(\sqrt{\nu_1} u_{i,1}, \dots, \sqrt{\nu_{d_e}} u_{i,d_e}) = (\< q_i, \phi_1 \>,\dots, \< q_i, \phi_{d_e} \>),
\end{align}
which is exactly the same point in $\bbR^{d_e}$ mapped by PCA from $q_i$.
%
%


\paragraph{Proof of \prpref{CS consistency}}
%
We have
\begin{align}
\sum_{i=1}^n\|p_i - \hat p_i\|_2^2&= \sum_{i=1}^n \sum_{k=1}^{d_e} ( \sqrt{\nu_k^+}u_{i,k} - \sqrt{\hat \nu_k^+}\hat u_{i,k} )^2 \\
& \le 2 \sum_{i=1}^n \sum_{k=1}^{d_e} ( \sqrt{\nu_k^+}u_{i,k} - \sqrt{\nu_k^+}\hat u_{i,k} )^2 + 2 \sum_{i=1}^n \sum_{k=1}^{d_e} ( \sqrt{\nu_k^+} \hat u_{i,k} - \sqrt{\hat \nu_k^+}\hat u_{i,k} )^2\\
& \le 2 \sqrt{\nu_1} \sum_{k=1}^{d_e} \| u_{k} - \hat u_{k} \|^2  + 2 \sum_{k=1}^{d_e} ( \sqrt{\nu_k^+}  - \sqrt{\hat \nu_k^+})^2.
\end{align}

By the Davis-Kahan $\sin(\Theta)$ theorem \cite{yu2015useful},
\begin{align}
\min_{\{u_1,\dots,u_{d_e}\}}\sum_{k=1}^{d_e}\| u_{k} - \hat u_{k} \|^2 \le \sum_{i=1}^c\frac{8 \|B - \hat B\|_2^2}{\min_{j\neq i} |\zeta_j - \zeta_i|^2} =: C_1 \|B - \hat B\|_2^2,
\end{align}
where $\zeta_1>\zeta_2>\cdots>\zeta_b$ are the distinct eigenvalues of $B$ for some $b\ge1$ with $\zeta_c = \nu_{d_e}$, and $\zeta_{b+1}=-\infty$. 
On the left side of the above inequality, the minimum is over all the orthonormal eigenvectors of $B$ corresponding to its top $d_e$ eigenvalues. 

We also have
\begin{align}
\label{nusqrt}
|\sqrt{\nu_k^+} - \sqrt{\hat \nu_k^+} | \le 
\begin{cases}
|\nu_k - \hat \nu_k |/\sqrt{\nu_k}, & \text{if } \nu_k > 0; \\
\sqrt{|\nu_k - \hat \nu_k |}, & \text{if } \nu_k \le 0.
\end{cases}
\end{align}
Applying Weyl's inequality, which gives $\max_{k=1,\dots,d_e}|\nu_k - \hat \nu_k | \le \|B-\hat B\|_2$, we obtain
\begin{align}
\sum_{k=1}^{d_e} ( \sqrt{\nu_k^+}  - \sqrt{\hat \nu_k^+})^2
\le 
\begin{cases}
d_e \nu_{d_e}^{-1} \|B-\hat B\|_2^2, & \text{if } \nu_{d_e} > 0; \\
d_e \|B-\hat B\|_2, & \text{if } \nu_{d_e} = 0.
\end{cases}
\end{align}

Combining the two bounds we derived, we conclude that
\begin{align}
\sum_{i=1}^n\|p_i - \hat p_i\|_2^2
\le C_2 
\begin{cases}
\|B-\hat B\|_2^2, & \text{if } \nu_{d_e} > 0; \\
\|B-\hat B\|_2, & \text{if } \nu_{d_e} = 0.
\end{cases}
\end{align}
where $C_2:=2\sqrt{\nu_1}C_1 + 2d_e(\nu_{d_e}^{-1} \vee 1)$.
Furthermore, we know that $B = HAH$ and that $\hat B = H\hat AH$, where $H=I-J/n$ is the $n\times n$ centering matrix, with $I$ being the identity matrix and $J$ being the matrix of ones. It follows that 
\begin{align}
\|B - \hat B\|_2^2 \le \|A - \hat A\|_2^2 \|H\|_2^4 \le n^2 \|A - \hat A\|_2^2,
\end{align} 
because $H$ has one zero eigenvalue, and $n-1$ eigenvalues equal to one.
And plugging this bound into the previous display, we obtain \eqref{hatu consistency}. 

\paragraph{Proof of \prpref{CS n-consistency}}
Let $\fB$ be the integral operator with kernel $b$, and $\lambda_1 \ge \lambda_2 \ge \cdots$ be the eigenvalues of $\fB$ with associated orthonormal eigenfunctions $\phi_1,\phi_2,\dots$. For any Borel measurable function $\pi: \bbQ \to \bbR^{d_e}$, the integral operator (denoted by $\fK$) with kernel $K: (q,q') \mapsto \<\pi(q), \pi(q')\>$ is positive semi-definite and has rank at most $d_e$. Let $\lambda_1' \ge \lambda_2' \ge \cdots$ be the eigenvalues of $\fK$ with associated eigenfunctions $\phi_1',\phi_2',\dots$. Note that $\lambda_{d_e}' \ge 0 = \lambda_{d_e+1}'$. Let $d_+$ be the number of eigenvalues of $\fB$ that are positive. 
We have 
\begin{align}
&\E\Big[\big(\<\pi(Q), \pi(Q')\> - b(Q,Q')\big)^2\Big] \\
& = \trace ((\fB - \fK)^2) \\
&= \trace (\fB^2) + \trace ( \fK^2) - 2 \trace (\fB\fK) \\
&\ge \sum_{k=1}^\infty  \lambda_k^2 + \sum_{k=1}^{d_e}  (\lambda_k')^2 - 2 \sum_{k=1}^{d_e \wedge d_+}  \lambda_k\lambda_k' =: \Gamma(\lambda, \lambda'),
%
\end{align}
where the last inequality is a consequence of \cite[Lem 4.25]{lim2022classical}. 
If $d_+ \ge d_e $, 
\begin{align}
\Gamma(\lambda, \lambda') & = \sum_{k=d_e+1}^\infty  \lambda_k^2 + \sum_{k=1}^{d_e}  (\lambda_k - \lambda_k')^2 ;
\end{align}
If $d_+ < d_e $, 
\begin{align}
\Gamma(\lambda, \lambda') & = \sum_{k=d_++1}^\infty  \lambda_k^2 + \sum_{k=d_++1}^{d_e}  (\lambda_k')^2 + \sum_{k=1}^{d_+}  (\lambda_k - \lambda_k')^2. 
\end{align}
In either case, the lower bound $\Gamma(\lambda, \lambda')$ is achieved when $\fK$ has kernel
\begin{equation}
K(q,q') = \sum_{k=1}^{d_e \wedge d_+} \lambda_k \phi_k(q) \phi_k(q'),
\end{equation}
or equivalently, $\pi: q\mapsto (\sqrt{\lambda_1^+} \phi_1(q),\dots,\sqrt{\lambda_{d_e}^+} \phi_{d_e}(q))$. 

%

Recall that $\mu$ is the probability measure of $Q$. Let $\mu_n$ be the empirical measure based on $Q_1,\dots,Q_n$. Let $A_n = (a_{ij})$ with $a_{ij} := -\frac12 \delta(Q_i,Q_j)^2$ and $B_n=H A_n H$. The domain of the CS embedding $\gamma_n: Q_i\mapsto (\sqrt{\nu_1^+}u_{i,1}, \dots, \sqrt{\nu_{d_e}^+} u_{i,d_e})$ can be extended to $\bbQ$ in the following way, as given in \cite{kroshnin2022infinite}. 
Since $\mu$ does not have an atom due to \eqref{measure zero}, with probability one, $Q_1,\dots,Q_n$ are distinct. Again by using \eqref{measure zero}, it is straightforward to verify that the admissibility condition as defined in \cite[Def 1]{geiss2013optimally} is satisfied for $\delta^4$. 
By \cite[Th 1, Th 2]{geiss2013optimally}, there exists an optimal transport map $T_n$ being the minimizer of
\begin{align}
W_4(\mu_n,\mu):=\inf_{T\in \cT (\mu,\mu_n)} \Big[\int_\bbQ \delta^4(q, T(q)) \mu(\d q)\Big]^{1/4},
\end{align}
where $\cT( \mu,\mu_n)$ is the set of transformations $T : \bbQ\to\{Q_1,\dots,Q_n\}$ such that $T(X) \sim \mu_n$ when $X \sim \mu$. Note that although the results in \cite{geiss2013optimally} are stated for a Euclidean space, their proofs are also valid for a compact metric space satisfying the admissible condition, as in our case. 
Denote $\cV_i=T_n^{-1}(Q_i)$ and note that $\mu(\cV_i)=1/n$, $i=1,\dots,n$. Now $\bbQ$ is partitioned into sets $\cV_1,\dots,\cV_n$ which are disjoint.
For any $q\in\bbQ$, define
\begin{align}
\pi_n: q\mapsto 
%
%
&\Big(\sqrt{\hat\lambda_1^+}\hat\phi_1(q), \dots, \sqrt{\hat\lambda_{d_e}^+} \hat\phi_{d_e}(q) \Big),
\end{align}
where $\hat\lambda_k = \nu_k/n$ and $\hat\phi_k =\sqrt{n}\sum_{i=1}^n u_{i,k} \I_{\cV_i}$. 
Note that $\pi_n$ coincides with the CS imbedding $\gamma_n$ when applied to $Q_1,\dots,Q_n$. 
We have 
\begin{align}
& \|\pi_n(Q) - \pi(Q)\|^2 \\
 & = \sum_{k=1}^{d_e} \big(\sqrt{\hat\lambda_k^+}\hat\phi_k(Q) -  \sqrt{\lambda_k^+}\phi_k(Q)\big)^2 \\
& \le 2 \sum_{k=1}^{d_e} (\sqrt{\lambda_k^+}\phi_k(Q) - \sqrt{\lambda_k^+}\hat \phi_k(Q) )^2 + 2 \sum_{k=1}^{d_e} ( \sqrt{\lambda_k^+}\hat \phi_k(Q) - \sqrt{\hat\lambda_k^+}\hat\phi_k(Q) )^2\\
& \le 2 \sqrt{\lambda_1} \sum_{k=1}^{d_e} (\phi_k(Q) - \hat \phi_k(Q))^2  + 2 \sum_{k=1}^{d_e} ( \sqrt{\lambda_k^+}  - \sqrt{\hat\lambda_k^+})^2 \hat\phi_k(Q)^2.
\end{align}
Notice that $\E_Q [\hat\phi_k(Q)^2] = n\sum_{i=1}^n u_{i,k}^2 \;\mu(\cV_i) =1$. 
It can be shown that $(\hat\lambda_k, \hat\phi_k)$, $k=1,\dots,d_e$ are eigenpairs of an operator $\fB_n$ defined as follows. Consider the operator $\fS_n: L^2(\bbQ,\mu) \to \bbR^n$ given by
\begin{align}
\fS_n(f) = \sqrt{n} \Big(\int_{\cV_1} f \d \mu, \dots, \int_{\cV_n} f \d \mu \Big).
\end{align}
Its adjoint operator $\fS_n^*: \bbR^n \to L^2(\bbQ,\mu)$ is given by 
\begin{align}
\fS_n^*:  y=(y_1,\dots,y_n)^\top \mapsto \sqrt{n}\sum_{i=1}^n y_i \I_{\cV_i} .
\end{align}
Let $\fB_n:=n^{-1}\fS_n^*B_n\fS_n$. It can be shown that $\fB_n$ has eigenvalues $\hat\lambda_1,\dots,\hat\lambda_{d_e}$ associated with orthonormal eigenfunctions $\hat\phi_1,\dots,\hat\phi_{d_e}$, all the remaining eigenvalues being zero. Following the same arguments as in the proof of \prpref{CS consistency}, we have for some constant $C>0$ only depending on the eigenvalues of $\fB$ and $d_e$,
\begin{align}
\E_Q \big[\min_{\pi\in\Pi} \|\pi_n(Q) - \pi(Q)\|^2\big] \le C
\begin{cases}
\|\fB_n - \fB\|_2^2, & \text{if } \lambda_{d_e} > 0; \\
\|\fB_n - \fB\|_2, & \text{if } \lambda_{d_e} = 0.
\end{cases}
\end{align}
It follows from \cite[Lem 5.7]{kroshnin2022infinite} that
\begin{align}
\|\fB_n - \fB\|_2 \le 2 \big(\E_{Q,Q'} \big[\delta^4(Q,Q')\big]\big)^{1/4} W_4 (\mu_n, \mu) + 2W_4^2 (\mu_n, \mu).
\end{align}
Hence the almost sure convergence to zero of $\E_Q \big[\min_{\pi\in\Pi} \|\pi_n(Q) - \pi(Q)\|^2\big]$ follows from that of $\E [W_4 (\mu_n, \mu)]$, and this holds when $\bbQ$ is compact and separable and $\mu$ is a Borel probability measure --- see \cite{weed2019sharp}. 


\paragraph{Proof of \prpref{Isomap n-consistency}}
Let $q_i = q_{i_0},q_{i_1},\dots,q_{i_m}=q_j$ be the shortest path connecting $q_i$ and $q_j$ in the neighborhood graph with connectivity radius $r_n$, so that $d_{ij} = \sum_{k=0}^{m-1} \delta(q_{i_k}, q_{i_{k+1}})$. 

For any $\eps>0$, there exists a path $\gamma_{ij}$ connecting $q_i$ and $q_j$ such that $a - \eps \le \delta_\L(q_i, q_j) \le a$, where $a = \L(\gamma_{ij})$. Since every path of finite length can be parameterized with unit speed \cite[Prop 2.5.9]{burago2001course}, we assume $\gamma_{ij}: [0,a] \to \bbQ$ is so. Let $\tilde q_k = \gamma_{ij}(ka/m)$ for $k=0,\dots,m$, where $m=\lceil 2a/r_n \rceil$. Let $q_{j_k}$ be the nearest neighbor of $\tilde q_k$ among the sample points so that $\delta(q_{j_k},\tilde q_k) \le \eps_n$, $k=0,\dots,m$. Note that $q_{j_k}$ and $q_{j_{k+1}}$ are connected, because 
\begin{align}
\delta(q_{j_k}, q_{j_{k+1}}) 
&\le \delta(q_{j_k}, \tilde q_{k}) + \delta(\tilde q_{k}, \tilde q_{k+1}) + \delta(\tilde q_{k+1}, q_{j_{k+1}}) \\
&\le \delta_\L(\tilde q_{k}, \tilde q_{k+1}) + 2\eps_n = a/m + 2\eps_n,
\end{align}
which is less than $r_n$ when $n$ is large enough. Using this, we have

\begin{align}
d_{ij} \le \sum_{k=0}^{m-1} \delta(q_{j_k}, q_{j_{k+1}}) 
%
%
&\le  \sum_{k=0}^{m-1}\delta_\L(\tilde q_{k}, \tilde q_{k+1}) + 2\eps_n m\\
& = a + 2\eps_n m \label{dij bound}\\
& \le  \delta_\L(q_i, q_j) +\eps + 2\eps_n m
\to \delta_\L(q_i, q_j) +\eps, \label{dij bound limit}
\end{align}
because $2\eps_n m \asymp \eps_n/r_n \to 0$ as $n\to\infty$.
Since $\eps>0$ is arbitrary, we conclude that $\limsup_n d_{ij} \le \delta_\L(q_i, q_j)$.

Suppose that $n$ is large enough that $\eps_n/r_n \le 1$ and $\eps_n\le a$. We derive from  \eqref{dij bound} that $d_{ij}\le a + 2(2a\eps_n/r_n + \eps_n) \le 7a$, which leads to $q_{i_k}\in \ball:=\{q\in\bbQ: \delta(q,q_i)\le 7a\}$ by the triangle inequality. Note that the closed ball $\ball$ is compact, since we assume $(\bbQ,\delta)$ is proper. It then follows from \eqref{delta approx} that $\omega$ is uniformly continuous on $\ball \times \ball$, and that $\delta_\L(q_{i_k}, q_{i_{k+1}}) = \delta(q_{i_k}, q_{i_{k+1}}) [1+\omega(q_{i_k}, q_{i_{k+1}})]$, where
\begin{align}
\eta_n := \sup_{k\in\{0,\dots,m-1\}} |\omega(q_{i_k}, q_{i_{k+1}})| \to 0.
\end{align}
since $\max_k \delta(q_{i_k}, q_{i_{k+1}}) \le r_n \to 0$.
This then yields that 
\begin{align}
\label{deltaLbound}
\delta_\L(q_i, q_j) 
&\le \sum_{k=0}^{m-1} \delta_\L(q_{i_k}, q_{i_{k+1}}) \\
&= \sum_{k=0}^{m-1} \delta(q_{i_k}, q_{i_{k+1}}) [1+\omega(q_{i_k}, q_{i_{k+1}})] 
\le d_{ij} (1+\eta_n),
\end{align}
from which we conclude that $\liminf_n d_{ij} \ge \delta_\L(q_i, q_j)$.

\paragraph{Proof of \prpref{Isomap sample n-consistency}}
Let $q_i = q_{i_0},q_{i_1},\dots,q_{i_m}=q_j$ be the shortest path connecting $q_i$ and $q_j$ in the neighborhood graph based on $(\delta_{ij})$ with connectivity radius $r_n/2$, with length $d_{ij}^\dagger := \sum_{k=0}^{m-1} \delta_{i_k, i_{k+1}}$. The assumption in \eqref{delta uniformly consistent} gives that, for any $\epsilon_1\in(0,1)$,
\begin{align}
\hat \delta_{i_k, i_{k+1}} = \delta_{i_k, i_{k+1}} \pm \epsilon_1 \delta_{i_k, i_{k+1}}, \text{ for all } k=1,\dots,m,
\end{align}
with probability converging to one. Conditional on the above event, we have $\hat \delta_{i_k, i_{k+1}} \le r_n$, which then implies that
\begin{align}
\hat d_{ij} \le \sum_{k=0}^{m-1} \hat \delta_{i_k, i_{k+1}} \le (1+\epsilon_1) \sum_{k=0}^{m-1} \delta_{i_k, i_{k+1}} = (1+\epsilon_1) d_{ij}^\dagger.
\end{align}
Let $d_{ij}^\ddagger$ be the shortest-path distance between $q_i$ and $q_j$ in the neighborhood graph based on $(\delta_{ij})$ with connectivity radius $2r_n$. Following the same argument as above, we have that, for any $\epsilon_2\in(0,1/2)$, 
\begin{align}
d_{ij}^\ddagger \le \frac{1}{1-\epsilon_2} \hat d_{ij},
\end{align}
with probability converging to one. According to \prpref{Isomap n-consistency}, both $d_{ij}^\dagger$ and $d_{ij}^\ddagger$ are consistent for $\delta_\L(q_i, q_j)$. Hence we must have that $\hat d_{ij}$ is consistent for $\delta_\L(q_i, q_j)$ as $n \to \infty$.

\paragraph{Proof of \prpref{intrinsic limit}}
For a metric $\delta$, let $\L_\delta$ and $\Sigma_\delta$ denote the quantities defined by $\delta$ in \eqref{length}. 

We first consider the first part of the statement.
It is enough to show that (1) $\delta_\L$ is finite; and (2) $\L_\delta(\gamma) = \L_A(\gamma)$ for every path $\gamma$ such that $\L_\delta(\gamma) < \infty$. 
Unless otherwise specified, the topology of reference is the ambient Euclidean metric. The corresponding norm will be denoted $\|\cdot\|$, as usual.
We will use the fact that, by our assumption that $\delta$ is equivalent to the Euclidean metric, $(q,q_0) \mapsto \delta(q,q_0)$ is continuous. 

For (1), for any two (distinct) points, $q_0, q_1 \in \bbQ$, consider a smooth path $\gamma: [0,1] \to \bbQ$ connecting them. The existence of this path is elementary and rests the fact that $\bbQ$ is open and connected. 
By the fact that $\delta$ is continuous and property \eqref{intrinsic limit}, $u(s,t) := \delta(\gamma(s),\gamma(t))/\|A(\gamma(t))^{1/2}(\gamma(s)-\gamma(t))\|$ is continuous on $[0,1]$, and therefore uniformly continuous. 
In particular, since $u(t) := u(t,t) = 1$ for all $t$, there is $\eta > 0$ that $u(s,t) \le 2$ when $|t-s| \le \eta$.
Let $M_1 := \max_t \|A(\gamma(t)\| < \infty$ and $M_2 := \max_t \|\gamma'(t)\|$.
For a grid $0 = t_0  < t_1 < \dots < t_k = 1$ with $\max_i (t_i -t_{i-1}) \le \eta$, we have 
\begin{align}
\Sigma_\delta(\gamma, t_1, \dots, t_k)
&= \sum_{i=1}^k \delta(\gamma(t_{i-1}), \gamma(t_i)) \\
&\le 2 \sum_{i=1}^k \|A(\gamma(t_i))^{1/2} (\gamma(t_{i-1}) -\gamma(t_i))\| \\
&\le 2 M_1 \sum_{i=1}^k \|\gamma(t_{i-1}) -\gamma(t_i)\| \\
&\le 2 M_1 M_2 \sum_{i=1}^k (t_i-t_{i-1}) 
= 2 M_1 M_2. 
\end{align}
Taking the supremum over such grids, we deduce that $\L_\delta(\gamma) \le 2 M_1 M_2 < \infty$.


For (2), consider an arbitrary path $\gamma : [0,1] \to \bbQ$ such that $\L_\delta(\gamma) < \infty$. Since $\gamma$ is $\delta$-continuous, it is also $\|\cdot\|$-continuous, and furthermore, uniformly so, since $[0,1]$ is compact. Therefore, $u$ defined above is uniformly continuous.
Thus, coupled with the fact that $u$ is strictly positive, for $\eps>0$, there is $\eta > 0$ such that, if $|t-s|\le \eta$ then $1 - \eps \le u(s,t) \le 1+\eps$. 
For a grid $0 = t_0 < t_1 < \dots < t_k = 1$ with $\max_i (t_i -t_{i-1}) \le \eta$, we have
\begin{align}
\Sigma_\delta(\gamma, t_1, \dots, t_k)
&= \sum_{i=1}^k \delta(\gamma(t_{i-1}), \gamma(t_i)) \\
&= (1\pm\eps) \sum_{i=1}^k \|A(\gamma(t_i))^{1/2}(\gamma(t_{i-1}) -\gamma(t_i))\| \\
&= (1\pm \eps) \Sigma_A(\gamma, t_1, \dots, t_k). 
\end{align}
Taking the supremum over all such grids, we obtain
\begin{align}
\L_\delta(\gamma) = (1\pm \eps) \L_A(\gamma),
\end{align}
and $\eps > 0$ being arbitrary, we conclude that $\L_\delta(\gamma) = \L_A(\gamma)$.

We now consider the second part of the statement. 
Since we have already established the first part, it suffices to show that $\delta_\L$ also satisfies \eqref{intrinsic limit} based on the fact that it corresponds to the Riemannian metric with tensor $A$. When $A$ is twice differentiable, then such a result follows immediately from the fact that shortest paths are geodesics and a geodesic has curvature bounded by the maximum sectional curvature of the manifold along its travel path. (See \cite[Sec~3]{arias2019unconstrained} in the context of an embedded manifold, although this may be considered general due to Nash's theorem.) 
When $A$ is only continuous, we rely on \lemref{bruce} below.

\begin{lemma}[Bruce Driver, personal communication]
\label{lem:bruce}
Suppose an open connected set $\bbQ \subset \bbR^d$ is equipped with a continuous Riemannian metric tensor $A$. Then the resulting metric on $\bbQ$, denoted $\delta_A$, satisfies 
\begin{align}
\delta_A(q,q_0)
= \|A^{1/2}(q_0) (q-q_0)\| [1 \pm \omega(q,q_0)],
\end{align}
where 
\begin{align}
\omega(q,q_0) 
= \max\big\{\|A^{1/2}(q') A^{-1/2}(q_0) - I\| : q' \in \bar\ball(q_0, \|q-q_0\|)\big\},
\end{align}
for all $q_0, q \in \bbQ$ such that $\cE(q, q_0) \subset \bbQ$,
\begin{align}
\cE(q, q_0) := \big\{q': \|A^{1/2}(q_0)(q'-q_0)\| \le \|A^{1/2}(q_0)(q-q_0)\|\big\}.
\end{align}
Note that $\omega$ is a continuous function satisfying $\omega(q,q) = 0$ for all $q \in \bbQ$.
\end{lemma}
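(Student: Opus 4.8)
The plan is to argue directly from the intrinsic (Riemannian) distance
\[
\delta_A(q,q_0) \;=\; \inf_{\gamma}\ \int_0^1 \big\|A^{1/2}(\gamma(t))\,\gamma'(t)\big\|\,\d t ,
\]
the infimum running over Lipschitz paths $\gamma:[0,1]\to\bbQ$ with $\gamma(0)=q_0$, $\gamma(1)=q$ (any path of finite length is rectifiable, hence admits such a parametrization, the integrand is parametrization invariant, and for a merely continuous tensor this $\inf$ agrees with the partition-supremum length \eqref{length} --- a classical fact). Here $A^{1/2}(\cdot)$ is the unique symmetric positive-definite square root, continuous in its argument because $q\mapsto A(q)$ is. Fix $q_0,q$ with $\cE(q,q_0)\subset\bbQ$; abbreviate $L_0:=A^{1/2}(q_0)$, $\rho:=\|L_0(q-q_0)\|$, $\omega:=\omega(q,q_0)$, and read the ball $\bar\ball(q_0,\|q-q_0\|)$ figuring in $\omega$ in the $A(q_0)$-metric, i.e.\ as $\cE(q,q_0)$. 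Everything rests on a single estimate: for every $q'\in\cE(q,q_0)$ and every vector $v$,
\[
(1-\omega)\,\|L_0 v\|\ \le\ \big\|A^{1/2}(q')\,v\big\|\ \le\ (1+\omega)\,\|L_0 v\| ,
\]
obtained by writing $A^{1/2}(q')v=L_0v+\big(A^{1/2}(q')L_0^{-1}-I\big)L_0v$ and using $\|A^{1/2}(q')L_0^{-1}-I\|\le\omega$ with the (reverse) triangle inequality. It may clarify matters to note that under the affine change of variables $y=L_0(x-q_0)$ the ellipsoid $\cE(q,q_0)$ becomes a Euclidean ball centred at the origin and $A(q_0)$ becomes the identity.

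For the upper bound I would take $\gamma$ to be the straight segment $t\mapsto q_0+t(q-q_0)$, whose points satisfy $\|L_0(\gamma(t)-q_0)\|=t\rho\le\rho$, so it lies in $\cE(q,q_0)\subset\bbQ$; the right-hand inequality above with $v=q-q_0$ integrates to $\delta_A(q,q_0)\le(1+\omega)\rho$. For the lower bound, let $\gamma$ be any admissible path and put $\tau:=\inf\{t\in[0,1]:\|L_0(\gamma(t)-q_0)\|=\rho\}$, which is well defined and $\le1$ since $t\mapsto\|L_0(\gamma(t)-q_0)\|$ is continuous, vanishes at $0$, equals $\rho$ at $1$, and therefore stays $\le\rho$ on $[0,\tau]$, i.e.\ $\gamma(t)\in\cE(q,q_0)$ there. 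Applying the left-hand inequality pointwise with $v=\gamma'(t)$ and then $\int\|g\|\ge\|\int g\|$ (legitimate since a Lipschitz $\gamma$ is absolutely continuous),
\begin{align*}
\int_0^1\big\|A^{1/2}(\gamma(t))\gamma'(t)\big\|\,\d t
&\ \ge\ \int_0^\tau\big\|A^{1/2}(\gamma(t))\gamma'(t)\big\|\,\d t
\ \ge\ (1-\omega)\int_0^\tau\|L_0\gamma'(t)\|\,\d t \\
&\ \ge\ (1-\omega)\Big\|L_0\!\int_0^\tau\gamma'(t)\,\d t\Big\|
\ =\ (1-\omega)\,\|L_0(\gamma(\tau)-q_0)\|
\ =\ (1-\omega)\rho .
\end{align*}
Taking the infimum over $\gamma$ gives $\delta_A(q,q_0)\ge(1-\omega)\rho$, which together with the upper bound is the claimed identity. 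That $\omega(q,q)=0$ is immediate since $\cE(q,q)=\{q\}$ and $A^{1/2}(q)L_0^{-1}=I$ there, and $\omega$ is continuous because $q\mapsto A^{1/2}(q)$ is continuous and $\cE(q,q_0)$ varies continuously (Hausdorff) with $(q,q_0)$, so a maximum of a jointly continuous function over a continuously varying compact set is continuous.

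The main obstacle is the lower bound, and within it the scenario where $\gamma$ strays outside $\cE(q,q_0)$, beyond which $\|A^{1/2}(q')L_0^{-1}-I\|$ is uncontrolled. The first-exit truncation at $\tau$ handles this precisely: $\cE(q,q_0)$ is the sublevel set $\{q':\|L_0(q'-q_0)\|\le\rho\}$ with $q$ on its boundary, so $\gamma|_{[0,\tau]}$ still joins $q_0$ to a point at $L_0$-distance exactly $\rho$ and discarding the (possibly cheap, possibly wild) remainder of $\gamma$ costs nothing --- this is what makes the one-sided estimate survive a completely arbitrary competitor path. The remaining care is routine: the reduction to absolutely continuous competitors and the identification of \eqref{length} with the length integral for a continuous tensor, both standard.
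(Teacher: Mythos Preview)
Your proof is correct and follows essentially the same route as the paper's: the straight segment gives the upper bound, and for the lower bound you truncate an arbitrary competitor at the first exit time $\tau$ from the ellipsoid $\cE(q,q_0)$, apply the pointwise comparison $\|A^{1/2}(q')v\|\ge(1-\omega)\|L_0 v\|$ on $[0,\tau]$, and finish with $\int\|g\|\ge\|\int g\|$. Two minor differences worth noting: you work with an arbitrary admissible path and take the infimum, whereas the paper invokes a shortest path directly (your version is slightly more careful, since existence of minimizing geodesics is not guaranteed for a merely continuous tensor on an open set); and you make explicit the reading of $\bar\ball(q_0,\|q-q_0\|)$ as $\cE(q,q_0)$, which is indeed what the lower-bound argument requires and what the paper's own proof uses.
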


\begin{proof}
Fix such a pair of points $q_0, q \in \bbQ$, and define $v := q-q_0$ and $A_0 := A(q_0)$.

For the upper bound, consider the line segment $\sigma_t = q_0+tv$ for $t \in [0,1]$. Since this line segment is within $\bbQ$, we have $\delta_A(q,q_0) \le \L_A(\sigma)$, with
\begin{align}
\L_A(\sigma)
&= \int_{0}^{1} \|A^{1/2}(\sigma_t) \dot\sigma_t\| \d t \\
&= \int_{0}^{1} \|A_0^{1/2} v + (A^{1/2}(\sigma_t)-A_0^{1/2}) v\| \d t \\
&\le \|A_0^{1/2} v\| + \max_{t \in [0,1]} \|(A^{1/2}(\sigma_t)-A_0^{1/2}) A_0^{-1/2}\| \|A_0^{1/2} v\| \\
&\le \|A_0^{1/2} v\| [1 + \omega(q,q_0)], 
\end{align}
by the triangle inequality and then the fact that $\sigma([0,1]) \subset \cE(q,q_0)$.

For the lower bound, consider a shortest path $\sigma:[0,1] \to \bbQ$ joining $\sigma_0 = q_0$ and $\sigma_1 = q$. Let 
\begin{align}
\tau = \min\{t : \|A_0^{1/2}(\sigma_t-q_0)\| = \|A_0^{1/2} v\|\big\}.
\end{align}
Then $\delta_A(q,q_0) = \L_A(\sigma)$, with
\begin{align}
\L_A(\sigma)
&= \int_{0}^{1} \|A^{1/2}(\sigma_t) \dot\sigma_t\| \d t \\
&\ge \int_{0}^{\tau} \|A_0^{1/2} \dot\sigma_t + (A^{1/2}(\sigma_t)-A_0^{1/2}) \dot\sigma_t\| \d t \\
&\ge \int_{0}^{\tau} \|A_0^{1/2} \dot\sigma_t\| \big[1 - \|(A^{1/2}(\sigma_t)-A_0^{1/2}) A_0^{-1/2}\| \big] \d t \\
&\ge \int_{0}^{\tau} \|A_0^{1/2} \dot\sigma_t\| \d t \ \big[1 - \max_{s \in [0,\tau]} \|(A^{1/2}(\sigma_s)-A_0^{1/2}) A_0^{-1/2}\| \big] \\
&\ge \|A_0^{1/2} v\|\ [1 - \omega(q,q_0)], 
\end{align}
by the triangle inequality and then the fact that $\sigma([0,\tau]) \subset \cE(q,q_0)$ together with
\begin{align}
\|A_0^{1/2} v\|
= \|A_0^{1/2} (\sigma_\tau - \sigma_0)\| 
= \Big\|\int_0^\tau A_0^{1/2} \dot\sigma_t \d t\Big\|
\le \int_{0}^{\tau} \|A_0^{1/2} \dot\sigma_t\| \d t,
\end{align}
the last inequality being Jensen's.
\end{proof}

\paragraph{Verifying the claims made in \remref{intrinsic limit}}
For the first part, since $\delta$ is assumed continuous with respect to the Euclidean topology, it suffices to show that, if $\theta_0$ and $(\theta_n)$ are inside $\Theta$ and such that $\delta(\theta_n, \theta_0) \to 0$ as $n \to \infty$, then $\|\theta_n - \theta_0\| \to 0$ as well. Since $\Theta$ is bounded, extracting a subsequence if needed, we may assume that $(\theta_n)$ converges to some $\theta_\infty \in \bar\Theta$ in $\|\cdot\|$. Since $\delta$ is continuous, we have $\delta(\theta_\infty, \theta_0) = 0$, and because $\delta$ is assumed to be a metric not only on $\Theta$, but on its closure as well, this implies that $\theta_\infty = \theta_0$.     

For the second part, a Taylor expansion of order~2 gives
\begin{align}
\delta(q_1+\eps_1, q_0+\eps_0)^2 - \delta(q_1,q_0)^2
&= c_1(q_1,q_0)^\top \eps_1 + c_0(q_1,q_0)^\top \eps_0 \\
&\quad + \eps_1^\top C_{11}(q_1,q_0) \eps_1 + \eps_0^\top C_{00}(q_1,q_0) \eps_0 + \eps_1^\top C_{10}(q_1,q_0) \eps_0 \\
&\qquad + R(q_1,q_0) (\|\eps_1\|^2 + \|\eps_0\|^2),
\end{align}
where all the functions just introduced are continuous and $R(q,q) = 0$ for all $q$. In particular, taking $\eps_0=0$, $q_1=q_0$, $\eps_1 = q-q_0$, we get
\begin{align}
\delta(q, q_0)^2
= c_1(q_0,q_0)^\top (q-q_0) 
+ (q-q_0)^\top C_{11}(q_0,q_0) (q-q_0) 
+ R(q_0,q_0) \|q-q_0\|^2.
\end{align}
Because $\delta^2$ is nonnegative, it must be the case that $c_1(q_0,q_0) = 0$, and because $\delta^2$ has nonsingular Hessian, $C_{11}(q_0,q_0)$ must be nonsingular and thus positive definite.
This establishes \eqref{intrinsic limit} with $A(q_0) = C_{11}(q_0,q_0)$, half the Hessian of $\delta$ at $(q_0,q_0)$.

\paragraph{Proof of \prpref{time warping}}
Let $\bar\Theta$ be the space of all non-decreasing functions from $[0,1]$ to $[0,1]$. Here $\Theta\subset\bar\Theta$. We note that the definition $\Delta_{\rm W}(\theta, \theta_0)$ in \eqref{W2 1D} may extended to $\bar\Theta$. By Helly's selection theorem, for any sequence $\theta_1,\theta_2,\dots$ in $\bar\Theta$, there exists a subsequence $\theta_{n_1},\theta_{n_2},\dots$ and a function $\theta$ on $[0,1]$ such that $\lim_{k\to\infty} \theta_{n_k}(x) \to \theta(x)$ for all $x\in[0,1]$, which implies $\lim_{k\to\infty}\Delta_{\rm W}( \theta_{n_k},\theta) \to 0$ by the dominated convergence theorem. Note that $0\le \theta_{n_k}(x) \le 1$ for all $x\in[0,1]$ and $\theta_{n_k}(x_1) \le \theta_{n_k}(x_2)$ for $0\le x_1\le x_2 \le 1$. By taking the limit as $k\to\infty$, we have $0\le \theta(x) \le 1$ for all $x\in[0,1]$ and $\theta(x_1) \le \theta(x_2)$ for $0\le x_1\le x_2 \le 1$. In other words, $\theta\in\bar\Theta$. Therefore, $\bar\Theta$ is compact for the $\Delta_{\rm W}$ metric.
It turns out that $\bar\Theta$ is the closure of $\Theta$ --- see \lemref{barTheta}.

Let $\mu$ be a Borel probability measure on the compact metric space $(\bar\Theta,\Delta_W)$ such that $\mu(\bar\Theta\setminus\Theta)=0$. 
Note that this is only possible because $\bar\Theta$ is the closure of $\Theta$. Let $\E$ be the expectation under $\mu$. Similar to \eqref{bqdef}, with independent random elements $\vartheta,\vartheta'\sim\mu$, define
\begin{align}
\beta(\theta,\theta') := -\frac12 \Big(\Delta_{\rm W}(\theta,\theta')^2 - \E[\Delta_{\rm W}(\theta,\vartheta')^2] - \E[\Delta_{\rm W}(\vartheta,\theta')^2] + \E[\Delta_{\rm W}(\vartheta,\vartheta')^2]\Big),
\end{align}
and let $\fB:L^2(\mu)\to L^2(\mu)$ be the operator given by $\fB(\psi)(\theta) = \int_\Theta \beta(\theta,\theta') \psi(\theta') \d \mu(\theta')$. By \cite[Lem 4.2]{lim2022classical}, $\beta$ is a self-adjoint and compact (in fact, Hilbert-Schmidt) operator. It is also a continuous kernel, because it can be seen that as $\Delta_{\rm W}(\theta_n,\theta) \to 0$ and $\Delta_{\rm W}(\theta_n',\theta') \to 0$, 
\begin{align*}
|\Delta_{\rm W}(\theta_n,\theta'_n) - \Delta_{\rm W}(\theta,\theta')| & \le |\Delta_{\rm W}(\theta_n,\theta'_n) - \Delta_{\rm W}(\theta,\theta'_n)| + |\Delta_{\rm W}(\theta,\theta'_n) - \Delta_{\rm W}(\theta,\theta')| \\
& \le \Delta_{\rm W}(\theta_n,\theta) + \Delta_{\rm W}(\theta',\theta'_n) \to 0,
\end{align*}
and
\begin{align*}
|\E[\Delta_{\rm W}(\theta,\vartheta')^2] - \E[\Delta_{\rm W}(\theta_n,\vartheta')^2]| &= |\E[\Delta_{\rm W}(\theta,\vartheta')^2 - \Delta_{\rm W}(\theta_n,\vartheta')^2]| \\
& \le \E[|\Delta_{\rm W}(\theta,\vartheta') +  \Delta_{\rm W}(\theta_n,\vartheta')| \cdot |\Delta_{\rm W}(\theta,\vartheta') -  \Delta_{\rm W}(\theta_n,\vartheta')|]\\
& \le 2 \Delta_{\rm W}(\theta,\theta_n) \to 0,
\end{align*}
where we have used the fact that $\Delta_{\rm W}(\theta,\theta') \le 1$ for any $\theta,\theta'\in\bar\Theta$ and the triangle inequality.

Suppose that $\lambda\neq 0$ is an eigenvalue of $\fB$, and $\phi \in L^2(\mu)$ be a normalized eigenfuncton for $\lambda$. It is known from \cite[Lem 4.2]{lim2022classical} that $\int_{\bar\Theta} \phi(\theta) \d \mu(\theta) = 0$. Using this and the definition of eigenvalues, we have 
\begin{align}
\lambda & = \int_{\bar\Theta} \int_{\bar\Theta} \beta(\theta,\theta') \phi(\theta') \phi(\theta) \d\mu(\theta') \d\mu(\theta) \\
& = -\frac12 \int_{\bar\Theta} \int_{\bar\Theta} \Delta(\theta,\theta')^2 \phi(\theta') \phi(\theta)\d\mu(\theta') \d\mu(\theta) \\
& = -\frac12 \int_{\bar\Theta} \int_{\bar\Theta} \int_0^1 (\theta(x)-\theta'(x))^2 f(x) \d x \phi(\theta') \phi(\theta)\d\mu(\theta') \d\mu(\theta) \\
& =  \int_{\bar\Theta} \int_{\bar\Theta} \int_0^1 \theta(x)\theta'(x) f(x) \d x \phi(\theta') \phi(\theta) \d\mu(\theta') \d\mu(\theta) \\
& =  \int_0^1 \Big[\int_{\bar\Theta} \theta(x) \phi(\theta) \d x\d\mu(\theta) \Big]^2 f(x) \d x \\
& \ge 0.
\end{align}
This implies that all the eigenvalues of $\fB$ are non-negative. Let $\lambda_1 \ge \lambda_2 \ge \cdots$ be the eigenvalues of $\fB$ with associated orthonormal eigenfunctions $\phi_1,\phi_2,\dots$. The $d_e$-dimensional CS embedding is given by 
\begin{align}
\pi: \theta \mapsto (\sqrt{\lambda_1}\phi_1(\theta),\dots, \sqrt{\lambda_{d_e}}\phi_{d_e}(\theta)).
\end{align}
On the one hand, by definition,
\begin{align}
\Delta^{\rm CS}_{d_e}(\theta,\theta_0)^2 = \|\pi(\theta) - \pi(\theta_0)\|^2 = \sum_{k=1}^{d_e} \lambda_k \phi_k(\theta) \phi_k(\theta) + \sum_{k=1}^{d_e} \lambda_k \phi_k(\theta_0) \phi_k(\theta_0) - 2\sum_{k=1}^{d_e} \lambda_k \phi_k(\theta) \phi_k(\theta_0).
\end{align}
On the other hand, for any $\theta,\theta_0\in\Theta$,
\begin{align}
\Delta_{\rm W}^2(\theta,\theta_0) 
& = \beta(\theta,\theta) +  \beta(\theta_0,\theta_0) - 2  \beta(\theta,\theta_0) \\
&= \sum_{k=1}^\infty \lambda_k \phi_k(\theta) \phi_k(\theta) + \sum_{k=1}^\infty \lambda_k \phi_k(\theta_0) \phi_k(\theta_0) - 2\sum_{k=1}^\infty \lambda_k \phi_k(\theta) \phi_k(\theta_0).
\end{align}
The difference of the above two expressions gives
\begin{align}
\Delta_{\rm W}^2(\theta, \theta_0) - \Delta^{\rm CS}_{d_e}(\theta,\theta_0)^2 & = \sum_{k=d_e+1}^\infty \lambda_k \phi_k(\theta) \phi_k(\theta) + \sum_{k=d_e+1}^\infty \lambda_k \phi_k(\theta_0) \phi_k(\theta_0) - 2\sum_{k=d_e+1}^\infty \lambda_k \phi_k(\theta) \phi_k(\theta_0) \\
& = \sum_{k=d_e+1}^\infty \lambda_k [\phi_k(\theta) - \phi_k(\theta_0)]^2 \ge 0.
\end{align}
Clearly, for any non-negative numbers $a\ge b$, we have $a-b = \sqrt{a^2 - 2ab + b^2} \le \sqrt{a^2 - b^2}$. Hence,
\begin{align}
\sup_{\theta, \theta_0 \in \Theta} \big|\Delta^{\rm CS}_{d_e}(\theta,\theta_0) - \Delta_{\rm W}(\theta, \theta_0)\big| 
&\le \sup_{\theta, \theta_0 \in \Theta} \sqrt{\sum_{k=d_e+1}^\infty \lambda_k [\phi_k(\theta) - \phi_k(\theta_0)]^2} \\
&\le 2 \sup_{\theta \in \Theta}  \sqrt{\sum_{k=d_e+1}^\infty \lambda_k \phi_k(\theta)^2}
\mathop{\longrightarrow}_{d_e \to \infty} 0,
\end{align}
where the limit at the end follows from \cite[Lem 4.6.6]{hsing2015theoretical}.

\begin{lemma}
\label{lem:barTheta}
$\bar\Theta$ is the closure of $\Theta$ for $\Delta_{\rm W}$.
\end{lemma}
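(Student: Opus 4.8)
The plan is to split the statement into the two inclusions. One inclusion is already in hand: the proof of \prpref{time warping} shows (via Helly's selection theorem) that $\bar\Theta$ is compact, hence closed, for $\Delta_{\rm W}$, and every increasing diffeomorphism of $[0,1]$ is in particular a non-decreasing function $[0,1]\to[0,1]$, so $\Theta\subseteq\bar\Theta$ and therefore the $\Delta_{\rm W}$-closure of $\Theta$ is contained in $\bar\Theta$. It remains to prove that $\Theta$ is $\Delta_{\rm W}$-dense in $\bar\Theta$, i.e.\ that every $\theta\in\bar\Theta$ is a $\Delta_{\rm W}$-limit of increasing diffeomorphisms of $[0,1]$.

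A first reduction: since $0\le\theta,\theta'\le1$ gives $(\theta-\theta')^2\le1$ and $f$ is a density, the dominated convergence theorem shows that $\theta_n\to\theta$ Lebesgue-a.e.\ on $[0,1]$ implies $\Delta_{\rm W}(\theta_n,\theta)\to0$. So it suffices to approximate an arbitrary $\theta\in\bar\Theta$ by elements of $\Theta$ in the sense of a.e.\ convergence (indeed, of convergence off a set of arbitrarily small Lebesgue measure).

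Given $\theta\in\bar\Theta$, I would build the approximant in three moves. (i) \emph{Mollification.} Extend $\theta$ to $\bbR$ by $\theta\equiv\theta(0)$ on $(-\infty,0]$ and $\theta\equiv\theta(1)$ on $[1,\infty)$, and put $g=g_\delta:=\theta*\rho_\delta$ for a smooth mollifier $\rho_\delta$ supported in $[-\delta,\delta]$; then $g$ is $C^\infty$, non-decreasing, has $g([0,1])\subseteq[\theta(0),\theta(1)]\subseteq[0,1]$, and $g_\delta(x)\to\theta(x)$ at every continuity point $x$ of $\theta$, hence Lebesgue-a.e.\ (a monotone function has only countably many discontinuities). (ii) \emph{Strict monotonicity.} Replace $g$ by $h=h_\beta:=(1-\beta)g+\beta\,\mathrm{id}$, which is $C^\infty$ with $h'\ge\beta>0$, maps $[0,1]$ onto an interval $[a,b]$ with $0\le a<b\le1$ (the strict inequality because $g(0)\le g(1)$), and tends to $g$ uniformly as $\beta\downarrow0$. (iii) \emph{Bending the ends.} For small $\gamma>0$, set $\theta^*:=h$ on $[\gamma,1-\gamma]$ and, on $[0,\gamma]$ and $[1-\gamma,1]$, replace $h$ by smooth strictly increasing connectors $\ell:[0,\gamma]\to[0,h(\gamma)]$ and $r:[1-\gamma,1]\to[h(1-\gamma),1]$ that match the value and derivatives of $h$ at the interior junctions $\gamma$, $1-\gamma$ and satisfy $\ell(0)=0$, $\ell'(0)>0$, $r(1)=1$, $r'(1)>0$. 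The resulting $\theta^*$ is then an increasing diffeomorphism of $[0,1]$, and since $\theta^*=h$ off the set $[0,\gamma]\cup[1-\gamma,1]$ of Lebesgue measure $2\gamma$ while $|\theta^*-h|\le1$ throughout, absolute continuity of the integral gives $\Delta_{\rm W}(\theta^*,h)^2\le\int_{[0,\gamma]\cup[1-\gamma,1]}f\,\d x\to0$ as $\gamma\downarrow0$. Combining the three estimates by the triangle inequality yields, for any prescribed tolerance, an element of $\Theta$ within that tolerance of $\theta$, which is what we need.

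I expect move (iii) to be the main (if elementary) obstacle: one must check that the connectors exist, i.e.\ that $\ell$ can be positive and increasing on $[0,\gamma]$, run from $0$ to $h(\gamma)$, and match $h$ at $\gamma$ to the required order. This works precisely because the ``slope budget'' is favorable for small $\gamma$ — for instance a connector whose derivative interpolates linearly between a value $s_0>0$ at $0$ and $h'(\gamma)$ at $\gamma$ exists iff $s_0:=2h(\gamma)/\gamma-h'(\gamma)>0$, and $2h(\gamma)/\gamma-h'(\gamma)\to h'(0)>0$ when $a=h(0)=0$, while it diverges to $+\infty$ when $a>0$; the right endpoint is symmetric, with $1-h(1-\gamma)$ in place of $h(\gamma)$. (If one insists on $C^\infty$ rather than $C^1$ diffeomorphisms, match all derivatives at the junctions with a cutoff; nothing essential changes. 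One could also bypass move (iii) by identifying $(\bar\Theta,\Delta_{\rm W})$ isometrically with $(\mathcal P([0,1]),W_2)$ through $\theta\mapsto\theta_\#(f\,\d x)$ and invoking density of measures with smooth positive densities, but that presumes $f$ is itself regular, whereas the argument above uses only that $f$ is a density.) Moves (i)--(ii) and the reduction to a.e.\ convergence are routine.
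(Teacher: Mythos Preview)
Your proposal is correct and takes a genuinely different route from the paper's proof. The paper also proceeds in three stages but organizes them differently: it first convolves $\bar\theta$ with a compactly supported kernel \emph{after a linear rescaling} $(1+2h)x-h$, so that the smoothed function $\tilde\theta$ already satisfies $\tilde\theta(0)=0$ and $\tilde\theta(1)=1$; it then carries out a detailed analysis of the (finitely many) maximal intervals on which $\tilde\theta$ is flat, replaces those pieces by small line segments to obtain a strictly increasing $\check\theta$, and finally mollifies once more to restore smoothness. Your approach, by contrast, decouples the three issues more cleanly: plain mollification for smoothness, the convex combination $(1-\beta)g+\beta\,\mathrm{id}$ to force strict monotonicity in one stroke (bypassing any interval-by-interval analysis), and a local endpoint patch. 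The identity-mix trick is the main simplification --- it avoids the paper's most delicate step, namely characterizing where the mollified monotone function can fail to be strictly increasing --- at the cost of pushing the endpoint constraint to a separate stage, which you then handle by an explicit connector with the right slope budget. One small caveat: the paper works with $C^2$ diffeomorphisms (its mollifier is $C^2$), so your quadratic connector, which matches only first derivatives at the junction, would need the cutoff upgrade you mention to be formally adequate; that is indeed routine.
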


\begin{proof}
Consider a twice differentiable density function supported on $[-1,1]$, for example, the triweight kernel $g(x) = \frac{35}{32}(1-x^2)^3 \I(|x|\le1)$. For any positive bandwidth $h$, let $g_h(x) = h^{-1} g(x/h)$. For any $\bar\theta\in\bar\Theta$, extend it to $\mathbb{R}$ by setting $\bar\theta(x) = 0$ for $x\in(-\infty,0)$ and $\bar\theta(x) = 1$ for $x\in(1,\infty)$, and define 
\begin{align}
\tilde \theta (x) = (\bar\theta*g_h) \big((1+2h)x-h\big) = \frac{1}{h}\int_{-h}^h \bar\theta\big((1+2h)x-h - y\big) g\Big(\frac{y}{h}\Big) \d y , \quad \text{for } x\in[0,1].
\end{align}
Note that $\tilde \theta (0)=0$, $\tilde \theta (1)=1$, $\tilde \theta$ is twice differentiable because $g$ is, and that $\tilde \theta$ is non-decreasing because $\bar\theta$ is. Furthermore, again based on the same fact that $\bar\theta$ is non-decreasing, for any $x\in[0,1]$, we have
\begin{align}
\tilde \theta (x) 
&\le \frac{1}{h}\int_{-h}^h \bar\theta\big((1+2h)x \big) g\Big(\frac{y}{h}\Big) \d y = \bar\theta\big((1+2h)x\big) \ge \bar\theta(x), \\
\tilde \theta (x) 
&\ge \frac{1}{h}\int_{-h}^h \bar\theta\big((1+2h)x - 2h \big) g\Big(\frac{y}{h}\Big) \d y = \bar\theta\big((1+2h)x - 2h\big) \le \bar\theta(x).
\end{align}
Hence, as $h\to 0$,
\begin{align}
\Delta_{\rm W}^2(\bar\theta,\tilde \theta) &\le \int_0^1 \Big[ \max\Big( \bar\theta\big((1+2h)x\big) - \bar\theta(x), \bar\theta(x) - \bar\theta\big((1+2h)x - 2h\big)\Big)\Big]^2 f(x) \d x \\
&\to \int_0^1 \Big[ \max\big( \bar\theta(x+) - \bar\theta(x), \bar\theta(x) - \bar\theta(x-)\big)\Big]^2 f(x) \d x \\
&=0, \label{deltabartilde}
\end{align}
where the convergence follows from the dominated convergence theorem, and the last equality is due to the fact that $\bar\theta$ --- as a non-decreasing function --- has at most countable discontinuous points, which form a set of zero Lebesgue measure.

Notice that $\tilde \theta$ may not belong to $\Theta$ because it is possible that $\tilde \theta$ is not strictly increasing. 
Below we use $\tilde\theta$ to construct a function $\theta\in\Theta$ such that $\Delta_{\rm W}(\theta,\bar\theta)$ is small. Let $S\subset[0,1]$ be the set of points with a neighborhood where $\bar\theta$ is not flat, that is, for any $x\in S$, there exists $\eta>0$ such that $\bar\theta(x_1)<\bar\theta(x_2)$ for all $x_1\in[x-\eta,x)$ and $x_2\in(x,x+\eta]$. For any $z\in\bbR$, denote the intervals 
\begin{align}
I(z) = [(1+2h)z-2h, (1+2h)z] \text{ and } J(z) = \Big[\frac{z}{1+2h}, \frac{z+2h}{1+2h}\Big].
\end{align} 
Notice that $z\in J(x) \Leftrightarrow x\in I(z)$. Fix $x\in S$ and suppose that $z_1,z_2\in J(x)$ with $z_1<z_2$, which implies that $x\in I(z_1)\cap I(z_2)$ and thus $(1+2h)z_2-2h < x < (1+2h)z_1$. We have 
\begin{align}
& \tilde\theta (z_1) - \tilde\theta (z_2) \\
&= \frac{1}{h}\int_{-h}^h \Big[\bar\theta\big((1+2h)z_1-h - y\big) - \bar\theta\big((1+2h)z_2-h - y\big)\Big] g\Big(\frac{y}{h}\Big) \d y \\
&= \int_{-1}^1 \psi(u) g(u) \d u, \label{thetatildediff}
\end{align}
where we use a change of variable $y=hu$ in the last step and 
\begin{align}
\label{psiu}
\psi(u) := \bar\theta\big((1+2h)z_1-h (1- u)\big) - \bar\theta\big((1+2h)z_2-h (1- u)\big).
\end{align}
Note that $\psi(u) g(u) \le 0$ for $u\in(-1,1)$ because $\bar\theta$ is non-decreasing and $g$ is positive on $(-1,1)$, and that $\psi(u) g(u) < 0$ for $u$ in some neighborhoods of $u_i:=1-[(1+2h)z_i + x]/h$, $i=1,2$ (which make one of the two terms on the right side of \eqref{psiu} equal to $\bar\theta(x)$, respectively), by the definition of $x$. Hence $\tilde\theta (z_1) - \tilde\theta (z_2)<0$, which means that $J(x)$ is an interval where $\tilde\theta$ is strictly increasing.

Now let $T\subset[0,1]$ be the set of points with a neighborhood where $\tilde\theta$ is not flat. For any $z\in T$, there exists $\eta>0$ such that $\tilde\theta(z_1)<\tilde\theta(z_2)$ for all $z_1 \in [z-\eta,z)$ and $z_2 \in (z,z +\eta]$. The calculation in \eqref{thetatildediff} implies that there exists $u_0\in(-1,1)$ depending on $z_1$ and $z_2$ such that $\psi(u_0)<0$. In other words, there exists $x\in S$ such that $x\in ((1+2h)z_1 - h(1-u_0), (1+2h)z_2 - h(1-u_0))$. Since $z_1$ and $z_2$ can be arbitrarily close to $z$, we have that there exists $x\in S$ such that $x\in I(z)$, or equivalently, $z\in J(x)$, which is an interval of length $(2h)/(1+2h)$ where $\tilde\theta$ is strictly increasing, as pointed out above.

Without loss of generality, suppose that there exists a sequence $0 =a_1< b_1 < a_2 < b_2 < \cdots < a_m < b_m =1$ such that $\tilde \theta(b_i) = \tilde\theta(a_{i+1})$, which means $\tilde\theta(x)$ is flat on $[b_i,a_{i+1}]$, and that $\tilde\theta$ is strictly increasing on $[a_i,b_i]$. Here $b_i-a_i \ge (2h)/(1+2h)$, and hence $m$ must be finite for any fixed $h>0$. 

For any $\epsilon$ small enough that $\epsilon < \min_i [\tilde\theta(b_i) - \tilde\theta(a_i)]$, let $b_i^* = \tilde\theta^{-1}(\tilde\theta(b_i) - \epsilon)$ for $i=1,\dots,m-1$, and $a_i^* = \tilde\theta^{-1}(\tilde\theta(a_i) + \epsilon)$ for $i=2,\dots,m$. Note that $\tilde\theta^{-1}(x)$ is well defined for $x\in \bigcup_{i=1}^m (a_i,b_i)$. Define 
\begin{align}
\check\theta(x) = 
\begin{cases}
\tilde\theta(x) & x\in \big(\bigcup_{i=1}^m[a_i^*,b_i^*] \big) \bigcup [a_1,b_1^*] \bigcup [a_m^*,b_m]\\
\phi(x) & x\in \bigcup_{i=2}^{m-1}[b_i^*, a_{i+1}^*] 
\end{cases},
\end{align}
where $\phi(x)$ is a line segment connecting the two points $(b_i^*,\tilde\theta(b_i^*))$ and $(a_{i+1}^*,\tilde\theta(a_{i+1}^*))$. Under such a construction, $\check\theta$ is strictly increasing, and 
\begin{align}
\label{deltatildecheck}
\Delta_{\rm W}^2(\check\theta,\tilde \theta) \le \int_{\bigcup_{i=2}^{m-1}[b_i^*, a_{i+1}^*]}  |\phi(x) - \tilde\theta(x)|^2 f(x) \d x \le \epsilon^2.
\end{align}

Next we construct $\theta$ from $\check\theta$ by using convolution with a kernel, in a same way as for $\tilde\theta$ from $\bar\theta$. Extend $\check\theta$ to $\mathbb{R}$ such that $\check\theta(x)\equiv0$ for $x\in(-\infty,0)$ and $\check\theta(x)\equiv1$ for $x\in(1,\infty)$, and for $\rho>0$ define 
\begin{align}
\theta (x) = (\check\theta*g_\rho) \big((1+2\rho)x-\rho\big), \;\; x\in[0,1].
\end{align}
Again, $\theta (0)=0$, $\theta (1)=1$, and $\tilde \theta$ is twice differentiable. Most importantly, $\theta$ is strictly increasing because $\check\theta$ is, following from the same arguments for the strictly increasing part of $\tilde\theta$. Hence $\theta\in\Theta$. Similar to \eqref{deltabartilde}, we have
\begin{align}
\Delta_{\rm W}^2(\theta,\check\theta) \to 0 \text{ as } \rho\to 0. 
\end{align}
Combing this with \eqref{deltabartilde} and \eqref{deltatildecheck}, we see that $\Delta_{\rm W}^2(\theta,\bar\theta)$ can be made arbitrarily small by (sequentially) choosing $h$, $\epsilon$ and $\rho$ small enough, which means that $\bar\Theta$ is the closure of $\Theta$. 
\end{proof}


\small
\bibliographystyle{chicago}
\bibliography{ref}



\end{document}